%
\documentclass[12pt]{article}
\usepackage{amsmath, amsfonts, amsthm}
\usepackage{amssymb,mathrsfs, verbatim}
\usepackage{graphicx,amstext}
\usepackage[mathcal]{euscript}
\usepackage{mathtools}

%
%
%

%
\hsize=126mm \vsize=180mm
\parindent=5mm
%
\newcommand{\clg}[1]{{\mathcal{#1}}}

%
\newcommand{\PM}{\mathbb P}
\newcommand{\R}{\mathbb R}
\newcommand{\Z}{\mathbb Z}

%

\newcommand{\ve}{\varepsilon}
\newcommand{\vp}{\varphi}

\newcommand{\sgn}{\text{sgn}}

\newcommand \loc    {\text{loc}}

%

\newcommand{\Medint}{\mkern12mu\mbox{\vrule height4pt
         depth-3.2pt
          width5pt}\mkern-16.5mu\int\nolimits}

%

\newtheorem{theorem}{Theorem}[section]
\newtheorem{proposition}[theorem]{Proposition}
\newtheorem{remark}[theorem]{Remark}
\newtheorem{lemma}[theorem]{Lemma}

\newtheorem{definition}[theorem]{Definition}
\newtheorem{example}[theorem]{Example}

\begin{document}
\title{Homogenization of Schr\"odinger equations.
\\
Extended Effective Mass Theorems 
\\
for non-crystalline matter}

\author{Vernny Ccajma$^1$, Wladimir Neves$^{1}$, Jean Silva$^2$}

\date{}

\maketitle

\footnotetext[1]{ Instituto de Matem\'atica, Universidade Federal
do Rio de Janeiro, C.P. 68530, Cidade Universit\'aria 21945-970,
Rio de Janeiro, Brazil. E-mail: {\sl
v.ccajma@gmail.com, wladimir@im.ufrj.br.}}
\footnotetext[2]{ Departamento de Matem\'atica, Universidade Federal
de Minas Gerais. E-mail: {\sl jcsmat@mat.ufmg.br}}

\textit{Key words and phrases. Homogenization theory,
stochastic Schr\"odinger equations, initial value problem, extended effective mass theorems.}

%
%
\begin{abstract} This paper concerns the homogenization of Schr\"odinger equations for non-crystalline matter, 
that is to say the coefficients are given by the composition 
of stationary functions with stochastic deformations. 
Two rigorous results of so-called effective mass theorems in solid state physics are obtained: 
a general abstract result (beyond the classical stationary 
ergodic setting), and one for quasi-perfect materials (i.e. the disorder in the 
non-crystalline matter is limited). 
The former relies on the double-scale 
limits and the wave function is spanned on the Bloch basis. Therefore, we have extended 
the Bloch Theory which was restrict until now to crystals (periodic setting). The second 
result relies on the Perturbation Theory and a special case of stochastic deformations, 
namely stochastic perturbation of the identity.  
\end{abstract}

\maketitle



\section{Introduction}
\label{Intro}

In this work we study the homogenization (assymptotic 
limit as $\varepsilon \to 0$) of the anisotropic
Schr\"odinger equation in the following Cauchy problem
\begin{equation}
\label{jhjkhkjhkj765675233}
	\left\{
	\begin{aligned}
&	i\displaystyle\frac{\partial u_\varepsilon}{\partial t} - {\rm div} {\big(A( \Phi^{-1} {\big( \frac{x}{\varepsilon}, \omega \big)},\omega) \nabla u_\varepsilon \big)} 
        + \frac{1}{\varepsilon^2} V( \Phi^{-1} {\big( \displaystyle\frac{x}{\varepsilon}, \omega \big)},\omega) \; u_\varepsilon
\\[5pt]
	&\hspace{100pt}	+ U( \Phi^{-1} {\big( \frac{x}{\varepsilon}, \omega \big)},\omega) \; u_\varepsilon = 0, 
		\quad \text{in $\mathbb{R}^{n+1}_T \! \times \! \Omega$}, 
\\[3pt]
		& u_\varepsilon= u_\varepsilon^0, \quad \text{in $\mathbb{R}^n \! \times \! \Omega$},
	\end{aligned}
	\right.
\end{equation} 
where $\mathbb{R}^{n+1}_T := (0,T) \times \mathbb{R}^n$, for any real number $T> 0$, $\Omega$ is a probability space, 
and the unknown function $u_\varepsilon(t,x,\omega)$ is complex-value.  

\medskip
The coefficients in \eqref{jhjkhkjhkj765675233}, that is the matrix-value 
function $A$, the real-value (potencial) functions $V$, $U$ 
are random perturbations of stationary functions accomplished by 
stochastic diffeomorphisms $\Phi: \R^n \times \Omega \to \R^n$, (called stochastic deformations).
The stationarity property of random 
functions will be precisely defined in Section \ref{628739yhf}, also the definition of 
stochastic deformations which were introduced by 
X. Blanc, C. Le Bris, P.-L. Lions (see \cite{BlancLeBrisLions1,BlancLeBrisLions2}).
In that paper they consider the homogenization 
problem of an elliptic operator whose coefficients are periodic or stationary functions perturbed by 
stochastic deformations. 

\medskip
In particular, we assume that  $A = (A_{k \ell})$, $V$ and $U$ are measurable and
bounded functions, i.e. for $k, \ell= 1,\ldots,n$
\begin{equation}
\label{ASSUM1}
     A_{k \ell}, \; V, \; U \in L^\infty(\mathbb{R}^n \times \Omega).
\end{equation}
Moreover, the matrix $A$ is symmetric and 
uniformly positive defined, that is, there exists $a_0> 0$, such that, for a.a. 
$(y, \omega) \in \mathbb{R}^n \times \Omega$, and each $\xi \in \R^n$
\begin{equation}
\label{ASSUM2}
	\sum_{k,\ell=1}^n A_{k\ell}(y,\omega)\,  \xi_k \, \xi_\ell \geqslant a_0 {\vert \xi \vert}^2.
\end{equation}

\medskip
This paper is the second part of the Project initiated with  
T. Andrade, W. Neves, J. Silva \cite{AndradeNevesSilva}
(Homogenization of Liouville Equations 
beyond stationary ergodic setting)
concerning the study of moving electrons in non-crystalline matter, 
which justify the form considered for the coefficients 
in \eqref{jhjkhkjhkj765675233}. 
We recall that crystalline materials, also called perfect materials, are described 
by periodic functions. Thus any homogenization result for
Schr\"odinger equations with periodic coefficients is restrict to 
crystalline matter. 
Moreover, perfect materials are rare in Nature, there 
exist much more non-crystalline than crystalline materials.   
For instance, there exists a huge class called quasi-perfect materials
(see Section \ref{6775765ff0090sds}, also \cite{AndradeNevesSilva}), which are closer to 
perfect ones. Indeed, the concept of stochastic deformations 
are very suitable to describe interstitial defects in 
materials science
(see Cances, Le Bris \cite{CancesLeBris}, 
and Myers \cite{Myers}).

\medskip
One remarks that, the homogenization of the Schr\"odinger equation
in \eqref{jhjkhkjhkj765675233},
when the stochastic deformation $\Phi(y,\omega)$ 
is the identity mapping and the coefficients are periodic, 
were studied by Allaire, Piatnitski \cite{AllairePiatnitski}. 
Notably, that paper presents the discussion about the 
differences between the scaling considered in \eqref{jhjkhkjhkj765675233}
and the one called semi-classical limit. 
We are not going to rephrase
this point here, and address the reader to Chapter 4 in \cite{BensoussanLionsPapanicolaou}
for more general considerations about that. It should be mentioned that, to the best of
our knowledge the present work is the first to study the homogenization 
of the Schr\"odinger equations beyond the periodic setting, applying the double-scale 
limits and the wave function is spanned on the Bloch basis. 

\medskip
Last but not least, one observes that the initial data $u_\varepsilon^0$ 
shall be considered well-prepared, see equation \eqref{WellPreparedness}. 
This assumption is fundamental for the abstract homogenization result 
established in Theorem \ref{876427463tggfdhgdfgkkjjlmk}, where the 
limit function obtained from $u_\varepsilon$
satisfies a simpler Schr\"odinger equation, called the effective mass equation,
with effective constant coefficients, namely matrix $A^*$, and potential $V^*$.
This homogenization procedure is well known in solid state physics as 
Effective Mass Theorems, see Section \ref{HomoSchEqu}. 

\medskip
Finally, we stress Section \ref{6775765ff0090sds} which is 
related to the homogenization of the Schr\"odinger equation for quasi-perfect materials,
and it is also an important part of this paper. 
Indeed, a very special case occurs in situations where
the amount of randomness is small, more specifically the disorder in the 
material is limited. In particular, this section is interesting  
for numerical applications, where specific computationally efficient techniques 
already designed to deal with the homogenization of the Schr\"odinger equation in the periodic setting, 
can be employed to treat the case of quasi-perfect materials. 

\subsection{Contextualization}

Let us briefly recall that the homogenization's problem for \eqref{jhjkhkjhkj765675233} has been treated for the periodic case 
($A_{\rm per}(y)$, $V_{\rm per}(y)$, $U_{\rm per}(y)$), 
and  $\Phi(y,\omega) = y$ by some authors. Besides the paper by G. Allaire, A.Piatnitski \cite{AllairePiatnitski} already mentioned,
we address the following papers for the case of $A_{\rm per}= I_{n \times n}$, i.e. isotropic 
Schr\"odinger equation in \eqref{jhjkhkjhkj765675233}: G. Allaire, M.Vanninathan \cite{AllaireVanninathan}, L. Barletti, N. Ben Abdallah \cite{BarlettiBenAbdallah}, 
V. Chabu, C. Fermanian-Kammerer, F. Marcia  \cite{ChabuFermanianMarcia}, and we observe that this list is by no means exhaustive. 
In \cite{AllaireVanninathan}, the authors study a semiconductors model excited by an external potencial $U_{\rm per}(t,x)$, which depends on 
the time $t$ and macroscopic variable $x$. 
In  \cite{BarlettiBenAbdallah} the authors
treat the homogenization's problem when the external 
potential $U_{\rm per}(x,y)$ depends also on the macroscopic variable $x$. 
Finally, in \cite{ChabuFermanianMarcia}
it was considered an external potential $U_{\rm per}(t,x)$ 
which model the effects of impurities on the otherwise perfect matter. 

\medskip
All the references cited above treat the homogenization's problem 
for \eqref{jhjkhkjhkj765675233}, studying the spectrum of the associated 
Bloch spectral cell equation, that is, for each $ \theta \in \mathbb{R}^n$,
find the eigenvalue-eigenfunction pair $(\lambda,\psi)$, satisfying  
\begin{equation}
\label{8756trg}
\left\{
\begin{aligned}
L_{\rm per}(\theta) {\big[ \psi \big]}&= \lambda \, \psi, 
\quad \text{in $[0,1)^n$},
\\[5pt]
\psi(y)&\not= 0, \quad \text{periodic function},
\end{aligned}
\right.
\end{equation}
where $L_{\rm per}(\theta)$ is the Hamiltonian given by 
$$
L_{\rm per}(\theta){\big[ f \big]}= -{\big( {\rm div}_{\! y} + 2i\pi \theta \big)} {\big[ A_{\rm per}(y) {( \nabla_{\!\! y} 
+ 2i \pi \theta)} f \big]} + V_{\rm per}(y) f.
$$
The above eigenvalue problem is precisely stated (in the more general context studied in this paper) in 
Section \ref{877853467yd56rtfe5rtfgeds76ytged}. 
Here, concerning the periodic setting mathematical solutions to \eqref{8756trg}, we address the reader to 
C. H. Wilcox \cite{Wilcox}, (see in particular Section 2: A discussion of related literature). Then,
once this eigenvalue problem is resolved, the goal is to pass to the limit as $\varepsilon \to 0$. One remarks that, 
there does not exist an uniform estimate in $H^1(\R^n)$ for the family of solutions $\{u_\varepsilon\}$ of \eqref{jhjkhkjhkj765675233}, 
due to the scale $\varepsilon^{-2}$ multiplying the 
internal potential $V_{\rm per}(y)$. To accomplish the desired asymptotic limit, under this lack of compactness,
a nice strategy is to use the two-scale convergence, for instance see the proof of Theorem 3.2 in \cite{AllairePiatnitski}. 

\medskip
Let us now focus on the stochastic setting studied here, where the coefficients of
the Schr\"odinger equation in \eqref{jhjkhkjhkj765675233} are the composition of
stationary functions with stochastic deformations. First, we establish an analogously 
Bloch spectral cell equation assuming that the solution of 
equation \eqref{jhjkhkjhkj765675233} is given by a plane wave. Then, 
the stochastic spectral Bloch cell equation
\eqref{92347828454trfhfd4rfghjls}
is obtained applying the
asymptotic expansion WKB method.
More specifically, the Hamiltonian in \eqref{92347828454trfhfd4rfghjls}
is given by
$$
   L^\Phi(\theta)\big[ F \big]\! \! = -\big( {\rm div}_{\! z} + 2i\pi \theta \big){\left[ A{( \Phi^{-1}(z,\omega),\omega)} {\big( \nabla_{\!\! z} + 2i\pi\theta \big)} F \right]} 
   + V( \Phi^{-1}(z,\omega),\omega) F, 
$$
for each $F(z,\omega)= f\left( \Phi^{-1}(z,\omega),\omega \right)$, where $f(y,\omega)$ is a stationary function. 

\medskip
To follow, we consider the spectrum of the operator $L^\Phi(\theta)$, for each $\theta \in \R^n$ fixed. 
First, we follow the techniques 
applied for the periodic setting, and consider a characterization of the Rellich-Kondrachov Theorem on groups
given in \cite{VCWNJS}. One observes that, $\omega \in \Omega$ 
can not be treat as a fixed parameter. 
Then we obtain an abstract theorem, see Theorem \ref{876427463tggfdhgdfgkkjjlmk},
where it is given the assymptotic 
limit as $\varepsilon \to 0$ of the Cauchy problem \eqref{jhjkhkjhkj765675233}. 
For that, we apply the two-scale convergence (stochastic setting), developed in
Section \ref{pud63656bg254v2v5}, which is beyond the classical stationary 
ergodic setting. The main difference here with the earlier stochastic extensions of the periodic setting is 
that, the test functions used are random 
perturbations of stationary functions accomplished by 
the stochastic deformations. These compositions are beyond the stationary class, thus we have a 
lack of the stationarity property in this kind of test functions (see the introduction section in \cite{AndradeNevesSilva} 
for a deep discussion about this subject). It was introduced a compactification argument that, preserves the ergodic 
nature of the setting involved and allow us to overcome these difficulties.

The second applied strategy here to study 
the spectrum of the operator $L^\Phi(\theta)$ is  
the Perturbation Theory. Therefore, taking the advantage of the well known spectrum for $L_{\rm per}(\theta)$, 
we consider that the coefficients of
the Schr\"odinger equation in \eqref{jhjkhkjhkj765675233} are the composition of the periodic 
functions $A_{\rm per}$, $V_{\rm per}$ and $U_{\rm per}$ with a special case of stochastic deformations, 
namely stochastic perturbation of the identity (see Definition \ref{37285gdhddddddddddd}), 
which is given by 
$$
   \Phi_\eta(y,\omega) := y + \eta  \, Z(y,\omega) + \mathrm{O}(\eta^2),
$$
where $Z$ is some stochastic deformation and $\eta \in (0,1)$. This concept was introduced 
by X. Blanc, C. Le Bris, P.-L. Lions \cite{BlancLeBrisLions2}, and applied to 
evolutionary equations in T. Andrade, W. Neves, J. Silva \cite{AndradeNevesSilva}. 

\section{Preliminaries and Background}
\label{PrelmBackg}

This section introduces the basement theory, which will be used through the paper. 
To begin we fix some notations, and collect some preliminary results. The material which is well-known or a direct extension 
of existing work are giving without proofs, otherwise we present them.

\medskip
We denote by $\mathbb{G}$ the group $\mathbb{Z}^n$ (or $\mathbb{R}^n$), with $n \in \mathbb{N}$.
The set $[0,1)^n$
denotes the unit cube, which is also called the unitary cell and will be used  
as the reference period for periodic functions.
The symbol $\left\lfloor x \right\rfloor$ denotes the 
unique number in $\mathbb{Z}^n$, such that $x - \left\lfloor x \right\rfloor \in [0,1)^n$.
Let $H$ be a complex Hilbert space, we denote by $\mathcal{B}(H)$ 
the Banach space of linear bounded operators from $H$ to $H$.

\medskip
Let $U \subset \R^{n}$ be an open set, $p \geqslant  1$, and $s \in \mathbb{R}$.
We denote by 
$L^p(U)$ the set of (real or complex) $p-$summable functions
with respect to the Lebesgue measure (vector ones should be understood
componentwise). Given a Lebesgue measurable set
$E \subset \R^n$, 
$|E|$ denotes its $n-$dimensional Lebesgue measure.
Moreover, we will use the standard notations for the 
Sobolev spaces $W^{s,p}(U)$ and $H^{s}(U)\equiv W^{s,2}(U)$. 

\subsection{Anisotropic Schr\"odinger equations}
\label{SchEq}

Here we present the well-posedness for the solutions of the Schr\"odinger equation, 
and some properties of them. 
Most of the material can be found in
Cazenave, Haraux \cite{CazenaveHaraux}.

\medskip
First, let us consider the following Cauchy problem, which is driven by a linear anisotropic
Schr\"odinger equation, that is
\begin{equation}
\label{87644343}
   \left\{
   \begin{aligned}
   &i \; \partial_t u(t,x) - {\rm div} \big(A(x) \nabla u(t,x) \big)
   + V(x) \, u(t,x) = 0 \quad  \text{in $\mathbb{R}^{n+1}_T$}, 
   \\[5pt]
   & u(0,x)=u_0(x) \quad \text{in $\mathbb{R}^n$},
   \end{aligned}
  \right.
\end{equation}
where the unknown $u(t,x)$ is a complex value function, and $u_0$
is a given initial datum. The coefficient $A(x)$ is a symmetric real $n \times n$-matrix 
value function, and the potential $V(x)$ is a real function. We always assume that
\begin{equation}
\label{CONDITAV}
     A(x), V(x) \quad \text{are measurable bounded functions}. 
\end{equation}
One recalls that, a matrix $A$ is called (uniformly) coercive, when, there exists $a_0> 0$, 
such that, for each $\xi \in \mathbb{R}^n$, and almost all $x \in \mathbb{R}^n$,
$A(x) \xi \cdot \xi \geqslant a_0 \vert \xi \vert^2$. 

\medskip
The following definition tell us in which sense a complex function $u(t,x)$ is a mild solution to \eqref{87644343}.
\begin{definition}
\label{MildSol}
Let $A, V$ be coefficients satisfying \eqref{CONDITAV}. 
Given $u_0 \in H^1(\mathbb{R}^n)$, a function 
$$
   u \in C( [0,T]; H^1(\mathbb{R}^n)) \cap C^1((0,T); H^{-1}(\mathbb{R}^n))
$$
is called a mild solution to the Cauchy problem \eqref{87644343}, when for each $t \in (0,T)$, it follows that
\begin{equation}
\label{DEFSOLSCH}
   i \partial_t u(t) -{\rm div} \big(A \nabla u(t) \big) + V u(t) = 0 \quad \text{in $H^{-1}(\mathbb{R}^n)$}, 
\end{equation}
and $u(0)= u_0$ in $H^1(\mathbb{R}^n)$.
\end{definition}	

Then, we state the following 
\begin{proposition}
\label{PROPEUSCHEQ}
Let $A$ be a coercive matriz value function, $V$ a potential and 
$u_0 \in H^1(\mathbb{R}^n)$  a given initial data. 
Assume that $A, V$ satisfy \eqref{CONDITAV}. Then, there exist a unique 
mild solution of the Cauchy problem \eqref{87644343}. 
\end{proposition}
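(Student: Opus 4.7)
The plan is to realize the spatial operator $H\,u := -\operatorname{div}(A\nabla u) + V u$ as a self-adjoint operator on $L^2(\mathbb{R}^n)$ via the Kato/Friedrichs form construction, and then apply Stone's theorem to obtain a unitary group $U(t) = e^{-itH}$ that provides the mild solution. Concretely, first I would introduce the sesquilinear form
\begin{equation*}
a(u,v) := \int_{\mathbb{R}^n} A(x)\nabla u \cdot \overline{\nabla v}\,dx + \int_{\mathbb{R}^n} V(x)\,u\,\overline{v}\,dx, \qquad u,v \in H^1(\mathbb{R}^n).
\end{equation*}
Hypothesis \eqref{CONDITAV} makes $a$ continuous on $H^1(\mathbb{R}^n)$, the symmetry of $A$ and reality of $V$ make it symmetric, and coercivity of $A$ together with $\|V\|_{L^\infty} < \infty$ give the lower bound $a(u,u) + c\|u\|_{L^2}^2 \geq a_0\|\nabla u\|_{L^2}^2$ for $c := \|V\|_{L^\infty}$, so $a$ is closed and semibounded on $H^1(\mathbb{R}^n)$. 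The representation theorem (Cazenave--Haraux, Chap.~2) then delivers a unique self-adjoint operator $H$ on $L^2(\mathbb{R}^n)$ whose form domain is $Q(H)=H^1(\mathbb{R}^n)$ (with equivalent norm) and which satisfies $\langle H u, v\rangle_{H^{-1},H^1} = a(u,v)$.

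Next I would invoke Stone's theorem to deduce that $U(t) = e^{-itH}$ is a strongly continuous unitary group on $L^2(\mathbb{R}^n)$, and set $u(t) := U(t)u_0$ for $u_0 \in H^1(\mathbb{R}^n)$. The $H^1$-regularity is the delicate step: since $u_0 \in Q(H)$ and $U(t)$ commutes with the spectral projections of $H$, the spectral theorem yields
\begin{equation*}
a(u(t),u(t)) + c\|u(t)\|_{L^2}^2 = a(u_0,u_0) + c\|u_0\|_{L^2}^2 \quad \text{for every } t,
\end{equation*}
which gives the uniform bound $u \in L^\infty(0,T;H^1(\mathbb{R}^n))$; strong continuity in $H^1$ then follows by combining weak continuity (from $L^2$ continuity plus this uniform bound) with conservation of the equivalent $H^1$-norm encoded in the identity above. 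Because $H$ maps $H^1(\mathbb{R}^n)$ continuously into $H^{-1}(\mathbb{R}^n)$ via $a$, the distributional identity $\partial_t u = -iH u$ yields $u \in C^1((0,T);H^{-1}(\mathbb{R}^n))$ and the equation \eqref{DEFSOLSCH}.

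For uniqueness, given two mild solutions $u_1,u_2$ with the same initial datum, their difference $w := u_1-u_2$ satisfies $w\in C([0,T];H^1)\cap C^1((0,T);H^{-1})$ with $w(0)=0$ and $i\partial_t w = -\operatorname{div}(A\nabla w) + V w$ in $H^{-1}$. Pairing $\partial_t w$ with $w$ via the $H^{-1}$/$H^1$ duality and using the symmetry of $a$, one finds
\begin{equation*}
\tfrac{d}{dt}\|w(t)\|_{L^2}^2 = 2\,\mathrm{Re}\,\langle \partial_t w, w\rangle = -2\,\mathrm{Im}\,a(w,w) = 0,
\end{equation*}
so $w\equiv 0$. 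The main obstacle is the promotion of $L^2$ well-posedness to $C([0,T];H^1)$ regularity; this is precisely where one must use that $Q(H) = H^1(\mathbb{R}^n)$ and that the group preserves the form norm, rather than, say, classical elliptic regularity (which is unavailable here since $A$ is only $L^\infty$).
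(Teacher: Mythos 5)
Your proof is correct and follows essentially the same route as the paper: the cited Lemma 4.1.5 and Corollary 4.1.2 of Cazenave--Haraux are precisely the form-representation/Stone's-theorem machinery you spell out, together with the propagation of the form-domain $H^1(\mathbb{R}^n)$ regularity under the unitary group. (With the paper's sign convention the group should be $e^{itH}$ rather than $e^{-itH}$, but this is immaterial, and your uniqueness computation closes correctly since $a(w,w)$ is real by symmetry of the form.)
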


\begin{proof}
The proof follows applying Lemma 4.1.5 and Corollary 4.1.2 in \cite{CazenaveHaraux}.
\end{proof}

\medskip
\begin{remark}
\label{REMCOSTCOEFF}
It is very important in the homogenization procedure of the Schr\"odinger equation, 
when the coefficients $A$ and $V$ in \eqref{87644343} are constants,
the matrix $A$ is not necessarily coercive, and the initial data 
$u_0 \in L^2(\mathbb{R}^n)$. Then, a function $u \in L^2(\mathbb{R}^{n+1}_T)$ is called a
weak solution to \eqref{87644343}, if it satisfies 
$$
   i \partial_t u - {\rm tr}(A D^2 u) + V u = 0 \quad \text{in distribution sense}.
$$
Since $A, V$ are constant, we may apply the Fourier Transform, and
obtain the existence of a unique solution $u \in H^1((0,T); L^2(\mathbb{R}^{n}))$. 
Therefore, the solution $u \in  C([0,T]; L^2(\mathbb{R}^{n}))$ after being 
redefined in a set of measure zero, and we have $u(0)= u_0$ in $L^2(\mathbb{R}^n)$.
\end{remark}

\subsection{Stochastic configuration}
\label{628739yhf}

Here we present the stochastic context, which will be used thoroughly in the paper. 
To begin, let $(\Omega, \mathcal{F}, \mathbb{P})$ be a probability space. For each random variable  
$f$ in $L^1(\Omega; \PM)$, ($L^1(\Omega)$ for short), 
we denote its expectation value by
$$
    \mathbb{E}[f]= \int_\Omega f(\omega) \ d\PM(\omega).
$$

A mapping $\tau: \mathbb{G} \times \Omega \to \Omega$ is said a $n-$dimensional dynamical 
system if:
\begin{enumerate}
\item[(i)](Group Property) $\tau(0,\cdot)=id_{\Omega}$ and $\tau(x+y,\omega)=\tau(x,\tau(y,\omega))$ for all $x,y \in  \mathbb{G}$ 
and $\omega\in\Omega$.
\item[(ii)](Invariance) The mappings $\tau(x,\cdot):\Omega\to \Omega$ are $\PM$-measure preserving, that is, for each $x \in  \mathbb{G}$ and 
every $E\in \mathcal{F}$, we have 
$$
\tau(x,E)\in \mathcal{F},\qquad \PM(\tau(x,E))=\PM(E).
$$
\end{enumerate}
For simplicity, we shall use $\tau(k)\omega$ to denote $\tau(k,\omega)$. Moreover, it is usual to say that 
$\tau(k)$ is a discrete (continuous) dynamical system if $k \in \Z^n$ ($k \in \R^n$), but we only stress 
this when it is not obvious from the context. 

\medskip
A measurable function $f$ on $\Omega$ is called $\tau$-invariant, if for each $k \in \mathbb{G}$ 
$$
    f(\tau(k) \omega)= f(\omega) \quad \text{for almost all $\omega \in \Omega$}. 
$$
Hence a measurable set $E \in \mathcal{F}$ is $\tau$-invariant, if its characteristic function $\chi_E$ is $\tau$-invariant. 
In fact, it is a straightforward to show that, a $\tau$-invariant set $E$ can be equivalently defined by 
$
    \tau(k) E= E \quad \text{for each $k \in \mathbb{G}$}.
$
Moreover, we say that the dynamical system $\tau$ is ergodic, when
all $\tau$-invariant sets $E$ have measure $\PM(E)$ of either zero or one. 
Equivalently, we may characterize an ergodic dynamical system
in terms of invariant functions. Indeed, a dynamical system is ergodic if 
each $\tau$- invariant function is constant almost everywhere, that is to say 
$$
    \Big( f(\tau(k) \omega)= f(\omega) \quad \text{for each $k \in \mathbb{G}$ and a.e. $\omega \in \Omega$} \Big) 
    \Rightarrow \text{ $f(\cdot)= const.$ a.e.}.  
$$


\begin{example}
\label{EXTJING}
Let $(\Omega_0,\mathscr{F}_0,\mathbb{P}_0)$ be a probability space. 
For $m \in \mathbb{N}$ fixed, we consider the set $S= \{0,1,2,\ldots,m\}$ 
and the real numbers $p_0, p_1, p_2, \ldots, p_m$ in $(0,1)$, such that 
$\sum_{\ell= 0}^m p_\ell=1$. 
If $ \{X_k:\Omega_0 \to S \}_{k\in\mathbb{Z}^n}$
is a family of random variables, then it is induced a probability measure from it on the measurable space  
$\big( S^{\mathbb{Z}^n}, \bigotimes_{k\in\mathbb{Z}^n}2^{S} \big)$. Indeed, we may define the probability 
measure
$
\mathbb{P}(E):= \mathbb{P}_0{\left\{ X \in E \right\}}, \;\; E \in \bigotimes_{k\in\mathbb{Z}^n}2^{S},
$
where the mapping $X: \Omega_0 \to S^{\mathbb{Z}^n}$ is given by 
$X(\omega_0)= (X_k(\omega_0))_{k\in\mathbb{Z}^n}$.

\medskip
Now, we denote for convenience $\Omega= S^{\mathbb{Z}^n}$
and $\mathscr{F}= \bigotimes_{k\in\mathbb{Z}^n}2^{S}$, 
that is, $\mathscr{F}= \sigma(\mathscr{A})$, 
where $\mathscr{A}$ is the algebra given by the finite union of sets (cylinders of finite base) 
of the form 
$    \prod_{k \in \mathbb{Z}^n} E_k$, 
where $E_k \in 2^S$ is different from $S$ for a finite number of indices $k$. Additionally we assume that,
the family 	$\{X_k\}_{k\in\mathbb{Z}^n}$ is independent, and for each $k \in \mathbb{Z}^n$, we have 
\begin{equation}
\label{243}
	\mathbb{P}_0{\{ X_k=0 \}}= p_0, \,\, \mathbb{P}_0{\{ X_k=1 \}}= p_1, \,\, \ldots, \,\, \mathbb{P}_0{\{ X_k=m \}}= p_m.
\end{equation}
Then, we may define an ergodic dynamical system $\tau: \mathbb{Z}^n \times \Omega \to \Omega$, by
$( \tau (\ell) \omega)(k):= \omega(k + \ell)$, for any $k,\ell \in \mathbb{Z}^n$,
where $\omega= (\omega(k))_{k \in \mathbb{Z}^n}$.
\end{example}

\medskip
Now, let $(\Gamma, \mathcal{G}, \mathbb{Q})$ be a given probability space. We say that a
measurable function $g: \R^n \times\Gamma \to \R$ is stationary, if for any finite set 
consisting of points $x_1,\ldots,x_j\in \R^n$, and any $k \in \mathbb{G}$, the distribution of the random vector 
$
   \big(g(x_1+k,\cdot),\cdots,g(x_j+k,\cdot)\big)
$
is independent of $k$. Further, subjecting the stationary function $g$ to some natural conditions
it can be showed that, there exists other probability space $(\Omega, \mathcal{F}, \PM)$,  a $n-$dimensional dynamical system 
$\tau: \mathbb{G} \times \Omega \to \Omega$ and a measurable function $f: \R^n \times \Omega \to \R$ satisfying 
\begin{itemize}
\item For all $x \in \R^n$, $k \in \mathbb{G}$ and $\PM-$almost every $\omega \in \Omega$ 
\begin{equation}
\label{Stationary}
   f(x+k, \omega)= f(x, \tau(k) \omega).
\end{equation} 

\item For each $x \in \R^n$ the random variables $g(x,\cdot)$ and $f(x,\cdot)$ have the same 
law. We recall that, the equality almost surely implies 
equality in law, but the converse is not true. 

\end{itemize} 

One remarks that, the set of stationary functions forms an algebra, and
also is stable by limit process. 
For instance, the product of two
stationaries functions is a stationary one, and the derivative of a 
stationary function is stationary. 
Moreover, the stationarity concept is the most general extension of the 
notions of periodicity and almost periodicity for a function to have some "self-averaging" behaviour. 
	
\begin{example}
\label{8563249tyudh}
Under the conditions of Example \ref{EXTJING}, we take $m= 1$, and
consider the following functions, $\varphi_0 = 0$ and $\varphi_1$ be a Lipschitz vector field,  
such that, 
$\varphi_1$ is periodic, ${\rm supp} \, \varphi_1 \subset (0,1)^n$. 
Consequently, the function 
\begin{equation*}
    f(y,\omega) := \varphi_{\omega({\lfloor y \rfloor})} (y), \;\; (y,\omega) \in \mathbb{R}^n \times \! \Omega
\end{equation*}
satisfies,  ${ f(y,\cdot) }$ is ${ \mathscr{F} }$-measurable, ${ f(\cdot,\omega) }$ is continuous, and 
for each  $k \in \mathbb{Z}^n$, 
$$
    f(y+k,\omega)= f(y,\tau(k)\omega).
$$
Therefore, ${ f }$ is a stationary function. 
\end{example}

\medskip
Next we present the precise definition of the stochastic deformation as presented in \cite{AndradeNevesSilva}.
\begin{definition}
\label{GradPhiStationary}
A mapping $\Phi: \R^n \times \Omega \to \R^n, (y,\omega) \mapsto z= \Phi(y,\omega)$, is called a stochastic deformation (for short $\Phi_\omega$), when satisfies:
\begin{itemize}
\item[i)] For $\mathbb{P}-$almost every $\omega \in \Omega$, $\Phi(\cdot,\omega)$ is a bi--Lipschitz diffeomorphism.

\item[ii)] There exists $\nu> 0$, such that
$$
\underset{\omega \in \Omega, \, y \in \R^n}{\rm ess \, inf} 
\big({\rm det} \big(\nabla \Phi(y,\omega)\big)\big) \geq \nu.
$$
\item[iii)] There exists a $M> 0$, such that
$$
 \underset{\omega \in \Omega, \, y \in \R^n}{\rm ess \, sup}\big(|\nabla \Phi(y,\omega)|\big) \leq M< \infty.
$$
\item[iv)]
The gradient of $\Phi$, i.e. $\nabla\Phi(y,\omega)$, is stationary in the sense~\eqref{Stationary}.
\end{itemize}
\end{definition}

Now, let us present an interesting example of stochastic deformations. 
\begin{example}
\label{EXAMPLE8}
Under the conditions of Example \ref{8563249tyudh}, let us consider (for $\eta> 0$)
the following map 
$$ 
    \Phi(y,\omega):= y + \eta \, \varphi_{\omega({\lfloor y \rfloor})} (y), \;\; (y,\omega) \in \mathbb{R}^n \times \Omega.
$$
Then,  $\nabla_{\!\! y} \Phi(y,\omega) = I_{\mathbb{R}^{n\times n}} + \eta \, \nabla \varphi_{\omega({\lfloor y \rfloor})} (y)$,
and for $\eta$ sufficiently small all the conditions in the Definition \ref{GradPhiStationary} are satisfied.
Then, ${ \Phi }$ is a stochastic deformation.
\end{example}

Given a stochastic deformation $\Phi$, 
let us consider the following spaces
\begin{equation}
	\mathcal{L}_\Phi := {\big\{F(z,\omega)= f( \Phi^{-1} (z, \omega), \omega); f \in L^2_{\rm loc}(\mathbb{R}^n; L^2(\Omega)) \;\; \text{stationary} \big\}}
\end{equation}
and
\begin{equation}
\label{SPACEHPHI}
		\mathcal{H}_\Phi := {\big\{F(z,\omega)= f( \Phi^{-1} (z, \omega), \omega); \; f\in H^1_{\rm loc}(\mathbb{R}^n; L^2(\Omega)) \;\; \text{stationary} \big\}}
\end{equation}
which are Hilbert spaces, endowed respectively with the inner products  
$$
\begin{aligned}
  {\langle F, G \rangle}_{\mathcal{L}_\Phi}&:= \int_\Omega \int_{\Phi([0,1)^n,\omega)} \!\! F(z, \omega) \, \overline{ G(z, \omega) } \, dz \, d\mathbb{P}(\omega),
\\[5pt]
{\langle F, G \rangle}_{\mathcal{H}_\Phi}&:= \int_\Omega \int_{\Phi([0,1)^n,\omega)} \!\! F(z, \omega) \, \overline{ G(z, \omega) } \, dz \, d\mathbb{P}(\omega)
\\
&\; \quad +\int_\Omega \int_{\Phi([0,1)^n,\omega)} \!\! \nabla_{\!\! z} F(z, \omega) \cdot \overline{ \nabla_{\!\! z} G(z, \omega) } \, dz \, d\mathbb{P}(\omega). 
\end{aligned}
$$
\begin{remark}
\label{REMFPHI}
Under the above notations, 
when $\Phi= Id$ we denote $\mathcal{L}_\Phi$ and $\mathcal{H}_\Phi$ by  $\mathcal{L}$ and $\mathcal{H}$ respectively.
Moroever, a function $F \in \clg{H}_\Phi$ if, and only if, $F \circ \Phi \in \clg{H}$, and 
there exist constants $C_1, C_2> 0$, such that 
$$
   C_1 \|F \circ \Phi \|_{\clg{H}} \leq \|F \|_{\clg{H}_\Phi} \leq C_2 \|F \circ \Phi \|_{\clg{H}}.
$$
Analogously, $F \in \clg{L}_\Phi$ if, and only if, $F \circ \Phi \in \clg{L}$, and 
there exist constants $C_1, C_2> 0$, such that 
$$
   C_1 \|F \circ \Phi \|_{\clg{L}} \leq \|F \|_{\clg{L}_\Phi} \leq C_2 \|F \circ \Phi \|_{\clg{L}}.
$$  
Indeed, let us show the former equivalence.
Applying a change of variables, we obtain 
$$
\begin{aligned}
    \|F\|^2_{\clg{H}_\Phi}&= \int_\Omega \int_{\Phi([0,1)^n,\omega)} \!\! |F(z, \omega)|^2 \, dz \, d\mathbb{P}(\omega)
   +\int_\Omega \int_{\Phi([0,1)^n,\omega)} \!\! |\nabla_{\!\! z} F(z, \omega)|^2 \, dz \, d\mathbb{P}(\omega)
\\[5pt]   
   &= \int_\Omega \int_{[0,1)^n} \!\! |f(y, \omega)|^2  \det [\nabla \Phi(y,\omega)] \,  dy \, d\mathbb{P}(\omega)
\\[5pt]   
   &\quad +\int_\Omega \int_{[0,1)^n} \!\! | [\nabla \Phi(y,\omega)]^{-1} \nabla_{\!\! z} f(y, \omega)|^2  \det [\nabla \Phi(y,\omega)] \, dy \, d\mathbb{P}(\omega).
\end{aligned}
$$
The equivalence follows from the properties of the stochastic deformation $\Phi$. 
\end{remark}

\subsubsection{Ergodic theorems}
\label{ErgThm}

We begin this section with the concept of mean value, which is 
in connection with the notion of stationarity. 
A function $f \in L^1_{\loc}(\R^n)$ is said to possess a mean value if the 
sequence $\{f(\cdot/\ve){\}}_{\ve>0}$ converges in the duality with $L^{\infty}$ and compactly supported 
functions to a constant $M(f)$. This convergence is equivalent to
\begin{equation}
\label{MeanValue}
\lim_{t\to\infty}\frac1{t^n|A|}\int_{A_t}f(x)\,dx=M(f),
\end{equation}
where $A_t:=\{x\in\R^n\,:\, t^{-1}x\in A\}$, for $t>0$ and any $A \subset \R^n$, with $|A| \ne0$.

\begin{remark}
\label{REMERG}
Unless otherwise stated, we assume that the dynamical system $\tau: \mathbb{G} \times \Omega\to\Omega$ is ergodic 
and we will also use the notation 
$$
   \Medint_{\R^n} f(x) \ dx \quad \text{for $M(f)$}.
$$
\end{remark}

Now, we state the result due to Birkhoff, which connects all the notions 
considered before, see \cite{Krengel}. 

\begin{theorem}[Birkhoff Ergodic Theorem]\label{Birkhoff}
Let $f \in L^1_\loc(\R^n; L^1(\Omega))$ be a stationary random variable. 
Then, for almost every $\widetilde{\omega} \in \Omega$ the function 
$f(\cdot,\widetilde{\omega})$ possesses a mean value in the sense of~\eqref{MeanValue}. Moreover, the mean value 
$M\left(f(\cdot,\widetilde{\omega})\right)$ as a function of $\widetilde{\omega} \in\Omega$ satisfies
for almost every $\widetilde{\omega} \in \Omega$: 

\smallskip
i) Discrete case (i.e. $\tau: \Z^n \times \Omega \to \Omega$);
$$
    \Medint_{\R^n} f(x,\widetilde{\omega}) \ dx= 
   \mathbb{E} \left[\int_{[0,1)^n}  f(y,\cdot)\, dy\right].
$$

ii) Continuous case (i.e. $\tau: \R^n \times \Omega \to \Omega$);
$$
    \Medint_{\R^n} f(x,\widetilde{\omega}) \ dx=  \mathbb{E}\left[ f(0,\cdot) \right].
$$
\end{theorem}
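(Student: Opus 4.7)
The plan is to reduce the spatial averages in the statement to ergodic averages of an induced integrable function on $\Omega$ under the dynamical system $\tau$, and then invoke the multidimensional Birkhoff--Wiener ergodic theorem (which is the form stated in Krengel's monograph) as a black box. The passage from the abstract ergodic averages back to the spatial averages appearing in \eqref{MeanValue} is done by a boundary/covering argument.

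\textbf{Discrete case.} Set $F(\omega):=\int_{[0,1)^n} f(y,\omega)\,dy$. Since $f\in L^1_{\loc}(\R^n;L^1(\Omega))$, Fubini gives $F\in L^1(\Omega)$. First I would consider the special case $A=[0,1)^n$ and $t=N\in\N$. Partitioning $[0,N)^n$ into translates of the unit cube and using the stationarity identity \eqref{Stationary}, one obtains
\begin{equation*}
\int_{[0,N)^n} f(x,\widetilde{\omega})\,dx
   \;=\; \sum_{k\in\{0,\ldots,N-1\}^n} \int_{[0,1)^n} f(y,\tau(k)\widetilde{\omega})\,dy
   \;=\; \sum_{k} F(\tau(k)\widetilde{\omega}).
\end{equation*}
Dividing by $N^n$ and applying the multidimensional Birkhoff ergodic theorem for the $\Z^n$-action $\tau$ (here the ergodicity hypothesis of Remark~\ref{REMERG} is used to identify the limit with $\mathbb{E}[F]$) yields the result for cubes with integer scaling.

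\textbf{Passage to arbitrary $A$ and $t\to\infty$.} For real $t\to\infty$ the error between $[0,t)^n$ and $[0,\lfloor t\rfloor)^n$ is supported on a boundary shell of thickness $O(1)$, hence of volume $O(t^{n-1})$. Bounding $f$ on this shell by the analogous ergodic average of $|f|$ on a slightly larger cube (which is already controlled), the boundary term is $o(t^n)$ a.s., so the limit persists. For a general bounded set $A$ with $|A|\neq 0$, I would sandwich $A$ from inside and outside by finite unions of dyadic cubes with total volume arbitrarily close to $|A|$; linearity and the cube case deliver \eqref{MeanValue} for $A$ by a standard $\varepsilon/3$ argument. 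In the case $f\in L^\infty(\R^n;L^1(\Omega))$, this sandwiching is immediate since the $L^\infty$-bound gives uniform control of the symmetric difference contribution.

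\textbf{Continuous case.} Now $\tau$ is an $\R^n$-action and the natural object is $F(\omega):=f(0,\omega)$, well defined under the stationarity \eqref{Stationary} for continuous actions (as explained in the example following \eqref{Stationary}). The identity $f(x,\widetilde{\omega})=f(0,\tau(x)\widetilde{\omega})$ rewrites the left-hand side of \eqref{MeanValue} as
\begin{equation*}
\frac{1}{t^n|A|}\int_{tA} F(\tau(x)\widetilde{\omega})\,dx,
\end{equation*}
and Wiener's continuous-parameter ergodic theorem for $\R^n$-actions (again from Krengel) delivers the a.s. limit $\mathbb{E}[F]=\mathbb{E}[f(0,\cdot)]$, at least for $A$ a cube; the extension to general $A$ proceeds exactly as above.

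\textbf{Main obstacle.} The substantive ingredient is the multidimensional ergodic theorem itself, which is invoked as a cited result. The only real work is the reduction step, and within it the delicate point is ensuring that the exceptional null set of $\widetilde{\omega}$ can be chosen independently of $A$ and of $t$. I would handle this by first proving the convergence along a countable dense family of cubes $A$ and rationals $t$ (using the countability to obtain a single full-measure set), and then propagating to general $A$ and $t$ via the monotonicity/boundary-shell estimates above.
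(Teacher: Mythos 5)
The paper gives no proof of this theorem; it simply cites Krengel. Your reduction --- tiling over translates of the unit cell and applying the $\mathbb{Z}^n$ Birkhoff theorem to $F(\omega)=\int_{[0,1)^n}f(y,\omega)\,dy$ in the discrete case, the orbit substitution $f(x,\widetilde{\omega})=f(0,\tau(x)\widetilde{\omega})$ together with Wiener's $\mathbb{R}^n$ ergodic theorem in the continuous case, and the boundary-shell/dyadic-cube approximation to pass from integer-scaled cubes to real $t$ and general $A$ while controlling the exceptional null set over a countable family --- is exactly the standard derivation that the cited reference delivers, and it is correct.
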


\medskip
The following lemma shows that, the Birkhoff Ergodic Theorem holds if a stationary function is composed 
with a stochastic deformation. 
\begin{lemma}\label{phi2}
Let $\Phi$ be a stochastic deformation and $f \in L^{\infty}_\loc(\R^n; L^1(\Omega))$ be a stationary random variable in the 
sense~\eqref{Stationary}. Then for almost $\widetilde{\omega} \in \Omega$ the function 
$f\left(\Phi^{-1}(\cdot,\widetilde{\omega}),\widetilde{\omega}\right)$ possesses a mean value 
in the sense of~\eqref{MeanValue} and satisfies: 

\smallskip
i) Discrete case;
$$
\text{$\Medint_{\R^n}f\left(\Phi^{-1}(z,\widetilde{\omega}),\widetilde{\omega}\right)\,dz
= \frac{\mathbb{E}\left[\int_{\Phi([0,1)^n, \cdot)} f {\left( \Phi^{-1}\left( z, \cdot \right), \cdot \right)} \, dz \right]}
{\det\left(\mathbb{E}\left[\int_{[0,1)^n} \nabla_{\!\! y} \Phi(y,\cdot) \, dy \right]\right)}$
\quad for a.a. $\widetilde{\omega} \in \Omega$}.
$$

ii) Continuous case; 
$$
\text{$\Medint_{\R^n}f\left(\Phi^{-1}(z,\widetilde{\omega}),\widetilde{\omega}\right)\,dz
= \frac{\mathbb{E}\left[f(0,\cdot)\det\left(\nabla\Phi(0,\cdot)\right)\right]}
{\det\left(\mathbb{E}\left[\nabla \Phi(0,\cdot)\right]\right)}$
\qquad for a.a. $\widetilde{\omega} \in \Omega$}.
$$
\end{lemma}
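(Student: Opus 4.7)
The plan is to reduce the statement to a direct application of Theorem \ref{Birkhoff} via the change of variable $z = \Phi(y,\widetilde{\omega})$. Setting $B_t(\widetilde{\omega}) := \Phi^{-1}(A_t,\widetilde{\omega})$, for any bounded measurable set $A \subset \R^n$ with $|A|\neq 0$ one has
\begin{equation*}
\int_{A_t} f\bigl(\Phi^{-1}(z,\widetilde{\omega}),\widetilde{\omega}\bigr)\,dz
= \int_{B_t(\widetilde{\omega})} f(y,\widetilde{\omega})\,\det\nabla\Phi(y,\widetilde{\omega})\,dy.
\end{equation*}
Because $f$ and $\nabla\Phi$ are stationary in the sense of \eqref{Stationary}, the product $g(y,\omega) := f(y,\omega)\det\nabla\Phi(y,\omega)$ is stationary and belongs to $L^{\infty}_{\loc}(\R^n; L^1(\Omega))$ by items $ii)$--$iii)$ of Definition \ref{GradPhiStationary}. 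Hence Theorem \ref{Birkhoff} provides, for almost every $\widetilde{\omega}\in\Omega$, the mean value $M(g(\cdot,\widetilde{\omega}))$, which equals $\mathbb{E}[f(0,\cdot)\det\nabla\Phi(0,\cdot)]$ in the continuous case and $\mathbb{E}\bigl[\int_{[0,1)^n} f(y,\cdot)\det\nabla\Phi(y,\cdot)\,dy\bigr]$ in the discrete case; a further change of variables rewrites the latter as $\mathbb{E}\bigl[\int_{\Phi([0,1)^n,\cdot)} f(\Phi^{-1}(z,\cdot),\cdot)\,dz\bigr]$, matching the numerator in the statement.

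\medskip

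The second step is to describe the asymptotic shape of $B_t(\widetilde{\omega})$. Define $\mathcal{M} := \mathbb{E}[\nabla\Phi(0,\cdot)]$ in the continuous case and $\mathcal{M} := \mathbb{E}\bigl[\int_{[0,1)^n}\nabla\Phi(y,\cdot)\,dy\bigr]$ in the discrete case; invertibility of $\mathcal{M}$ is guaranteed by item $ii)$ of Definition \ref{GradPhiStationary}. By integrating the stationary field $\nabla\Phi$ along radial segments and applying Theorem \ref{Birkhoff} to its entries, I would show that, for a.e.\ $\widetilde{\omega}$, the rescaled maps $y\mapsto \Phi(ty,\widetilde{\omega})/t$ converge uniformly on compact subsets of $\R^n$ to the linear map $y\mapsto \mathcal{M}y$ as $t\to\infty$, i.e.\ the Blanc--Le Bris--Lions asymptotic behaviour of stochastic deformations.

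\medskip

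From this uniform convergence the rescaled sets $t^{-1}B_t(\widetilde{\omega})$ approximate $\mathcal{M}^{-1}A$ in the symmetric difference sense, and in particular $|B_t(\widetilde{\omega})|/t^n \to |A|/|\det \mathcal{M}|$. Using the $L^{\infty}_{\loc}$ bound on $f$ together with the Lipschitz estimates of $\Phi$, the contribution of the boundary layer $B_t(\widetilde{\omega})\,\Delta\,(\mathcal{M}^{-1}A)_t$ to the integral of $g$ is negligible, so combining with the first step one obtains
\begin{equation*}
\frac{1}{t^n|A|}\int_{A_t} f\bigl(\Phi^{-1}(z,\widetilde{\omega}),\widetilde{\omega}\bigr)\,dz
\;\longrightarrow\; \frac{1}{|\det \mathcal{M}|}\,M\bigl(g(\cdot,\widetilde{\omega})\bigr),
\end{equation*}
which is exactly the two formulas claimed by the lemma.

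\medskip

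The main obstacle is precisely this sandwiching step: a direct application of Theorem \ref{Birkhoff} handles only integrals over domains of prescribed deterministic shape, whereas here $B_t(\widetilde{\omega})$ is random and depends non-trivially on $\widetilde{\omega}$. The remedy is to trap $B_t(\widetilde{\omega})$ between two deterministic dilations using the uniform convergence $\Phi(t\,\cdot,\widetilde{\omega})/t \to \mathcal{M}(\cdot)$, and then control the resulting boundary contribution through the bi-Lipschitz and stationarity estimates of items $ii)$--$iii)$ of Definition \ref{GradPhiStationary}. This geometric control is the technical heart of the argument.
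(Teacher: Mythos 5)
The paper does not prove Lemma~\ref{phi2}; it cites Blanc--Le Bris--Lions \cite{BlancLeBrisLions1} and Andrade--Neves--Silva \cite{AndradeNevesSilva}, and your sketch reconstructs precisely the argument in those references: change variables to make $g = f\,\det\nabla\Phi$ the object of Birkhoff, then control the randomly deformed domain $B_t(\widetilde{\omega})$ by the asymptotic linearity $\Phi(t\,\cdot,\widetilde{\omega})/t\to\mathcal{M}(\cdot)$. So the plan is the right one and the proof structure is sound.

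Two places where the sketch is imprecise and would need tightening. First, you write that the boundary-layer contribution is negligible ``using the $L^{\infty}_{\loc}$ bound on $f$''; but $f\in L^{\infty}_{\loc}(\R^n;L^1(\Omega))$ controls $\|f(y,\cdot)\|_{L^1(\Omega)}$, not $f(y,\widetilde{\omega})$ pointwise. For a fixed realization $\widetilde{\omega}$, the function $f(\cdot,\widetilde{\omega})$ is only $L^1_{\loc}$, so you cannot bound the boundary-layer integral by its Lebesgue measure times $\|f\|_{\infty}$. The correct remedy is a second application of Birkhoff's theorem to $|g|$ restricted to a thin deterministic annular domain $C_\delta := (\mathcal{M}^{-1}A)_\delta\setminus (\mathcal{M}^{-1}A)_{-\delta}$: by mean-value convergence, $t^{-n}\int_{(C_\delta)_t}|g(\cdot,\widetilde{\omega})|\,dy\to |C_\delta|\cdot M(|g|)$, and $|C_\delta|\to 0$ as $\delta\to 0$, so one sandwiches $B_t(\widetilde{\omega})$ between two deterministic dilations of $\mathcal{M}^{-1}A$ whose symmetric difference lies inside $(C_\delta)_t$ for $t$ large. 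Second, you assert that invertibility of $\mathcal{M}=\mathbb{E}[\nabla\Phi(0,\cdot)]$ is ``guaranteed by item $ii)$ of Definition~\ref{GradPhiStationary}''; that is not immediate, since $\det$ is not linear and $GL^+(n,\R)$ is not convex, so a lower bound on $\det\nabla\Phi$ does not by itself give $\det\mathbb{E}[\nabla\Phi]>0$. The invertibility is in fact a by-product of the same asymptotic analysis you invoke (because $y\mapsto\mathcal{M}y$ is the locally uniform limit of the bi-Lipschitz maps $y\mapsto\Phi(ty,\widetilde{\omega})/t$, hence itself a bijection of $\R^n$). Neither issue changes the plan, but both need to be stated correctly if you write the argument out in full.
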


\begin{proof}
See Blanc, Le Bris, Lions \cite{BlancLeBrisLions1}, also
Andrade, Neves, Silva \cite{AndradeNevesSilva}.
\end{proof}

\subsubsection{Analysis of stationary functions}

In the rest of this paper, unless otherwise explicitly stated, we
assume discrete dynamical systems and therefore, stationary 
functions are considered in this discrete sense.
We begin the analysis of stationary functions with the concept of realization.
\begin{definition}
Let $f: \mathbb{R}^n \! \times \! \Omega \to \mathbb{R}$ be a stationary function. 
For $\omega \in \Omega$ fixed, the function $f(\cdot, \omega)$ is called a realization
of $f$. 
\end{definition}
Due to Theorem \ref{Birkhoff}, almost every realization 
$f(\cdot,\omega)$ possesses a mean value in the sense of~\eqref{MeanValue}.
On the other hand, if $f$ is a stationary function, then the mapping 
$$
   y \in \mathbb{R}^n \mapsto \int_\Omega f(y, \omega) \, d\mathbb{P}(\omega) 
$$
is a $[0,1)^n-$periodic function. 

In fact, it is enough to consider the realizations to study some properties 
of stationary functions. For instance, the following theorem will be used 
more than once through this paper.  
%
\begin{theorem}
\label{987987789879879879}
For $p> 1$, let $u,v \in L^1_{\rm loc}(\mathbb{R}^n; L^p(\Omega))$
be stationary functions. Then, for any $i \in \{1,\ldots,n \}$ fixed, the following sentences are equivalent: 
\begin{equation}
\label{837648726963874}
(A) \quad  \int_{[0,1)^n} \int_\Omega u(y,\omega) \frac{\partial {\zeta}}{\partial y_i} (y, \omega) \, d\mathbb{P}(\omega) \, dy 
      = - \int_{[0,1)^n} \int_\Omega v(y,\omega) \, {\zeta}(y,\omega) \, d\mathbb{P} \, dy, \hspace{20pt}
\end{equation}
for each stationary function $\zeta \in C^1( \mathbb{R}^n; L^q(\Omega))$,
with $1/p + 1/q = 1$. 
\begin{equation}
\label{987978978956743}
(B) \quad  \int_{\mathbb{R}^n} u(y,\omega) \frac{\partial {\varphi}}{\partial y_i} (y) \, dy = - \int_{\mathbb{R}^n} v(y,\omega) \, {\varphi}(y) \, dy,
\hspace{87pt}
\end{equation}
for any $\varphi \in C^1_{\rm c}(\mathbb{R}^n)$, and almost sure $\omega \in \Omega$.
\end{theorem}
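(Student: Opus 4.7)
The plan is to prove both implications by exploiting stationarity to relate integration over $\mathbb{R}^n$ with integration over the unit cell $[0,1)^n$ (paired with expectation over $\Omega$). The two directions go in opposite directions, so each requires its own device: a \emph{periodization} trick for $(A) \Rightarrow (B)$, and a \emph{cut-off/ergodic average} argument for $(B) \Rightarrow (A)$.

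\textbf{Direction $(A) \Rightarrow (B)$.} Fix $\varphi \in C^1_c(\mathbb{R}^n)$ and $\eta \in L^q(\Omega)$. The idea is to manufacture a stationary test function out of $\varphi$ by summing translates weighted by the dynamical action on $\eta$: set
$$\zeta(y,\omega) \;:=\; \sum_{k \in \mathbb{Z}^n} \varphi(y-k)\,\eta(\tau(k)\omega).$$
Because $\varphi$ is compactly supported the sum is locally finite, so $\zeta \in C^1(\mathbb{R}^n; L^q(\Omega))$. A direct check using $\zeta(y+\ell,\omega) = \sum_{k'} \varphi(y-k')\eta(\tau(k')\tau(\ell)\omega) = \zeta(y,\tau(\ell)\omega)$ shows $\zeta$ is stationary. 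Plugging $\zeta$ into $(A)$ and, for each summand, changing variables $y' = y - k$, one finds the integration region becomes $-k + [0,1)^n$; invoking stationarity $u(y'+k,\omega)=u(y',\tau(k)\omega)$ and the measure-preserving property of $\tau(k)$ (so $\int_\Omega u(y',\tau(k)\omega)\eta(\tau(k)\omega)\,d\mathbb{P} = \int_\Omega u(y',\omega)\eta(\omega)\,d\mathbb{P}$), reassembles the sum into an integral over all of $\mathbb{R}^n$, yielding
$$\int_\Omega \eta(\omega)\!\left[\int_{\mathbb{R}^n} u(y,\omega)\partial_i\varphi(y)\,dy + \int_{\mathbb{R}^n} v(y,\omega)\varphi(y)\,dy\right] d\mathbb{P}(\omega) = 0.$$
Since $\eta \in L^q(\Omega)$ is arbitrary, the bracket vanishes for $\mathbb{P}$-a.e.\ $\omega$. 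Choosing a countable dense family $\{\varphi_m\}\subset C^1_c(\mathbb{R}^n)$ and intersecting the corresponding full-measure sets gives $(B)$.

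\textbf{Direction $(B) \Rightarrow (A)$.} Fix a stationary $\zeta \in C^1(\mathbb{R}^n; L^q(\Omega))$ and a smooth cut-off $\chi_R$ equal to $1$ on $[-R,R]^n$, supported in $[-2R,2R]^n$, with $\|\nabla \chi_R\|_\infty \leq C/R$. For $\mathbb{P}$-a.e.\ $\omega$, the function $y \mapsto \chi_R(y)\zeta(y,\omega)$ lies in $C^1_c(\mathbb{R}^n)$, so $(B)$ yields (dividing by $(2R)^n$)
$$\frac{1}{(2R)^n}\!\int u\,\chi_R\,\partial_i\zeta\,dy \;+\; \frac{1}{(2R)^n}\!\int u\,\zeta\,\partial_i\chi_R\,dy \;=\; -\frac{1}{(2R)^n}\!\int v\,\chi_R\,\zeta\,dy.$$
As $R\to\infty$, Birkhoff's Theorem (Theorem~\ref{Birkhoff}) applied to the stationary $L^1(\Omega)$-valued functions $u\,\partial_i\zeta$ and $v\,\zeta$ forces the first and third terms to converge $\mathbb{P}$-a.s.\ to $\mathbb{E}[\int_{[0,1)^n} u\,\partial_i\zeta\,dy]$ and $-\mathbb{E}[\int_{[0,1)^n} v\,\zeta\,dy]$, respectively. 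The remaining boundary term is bounded by
$$\frac{C}{R\,(2R)^n}\int_{[-2R,2R]^n}\!|u(y,\omega)\,\zeta(y,\omega)|\,dy,$$
whose integral average stays bounded by a further application of the ergodic theorem (to the stationary $L^1$ function $|u\,\zeta|$), while the extra factor $1/R$ drives the whole expression to $0$. This produces $(A)$.

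\textbf{Main obstacle.} The genuinely non-routine step is the periodization in $(A)\Rightarrow(B)$: one must convert a deterministic compactly supported test function into a stationary one without destroying either the stationarity structure or the ability to test against arbitrary $\omega$. The weighting by $\eta(\tau(k)\omega)$ is what introduces an arbitrary random ``slot'' in the identity, which is then eliminated by duality in $L^q(\Omega)$ to recover a pointwise-in-$\omega$ conclusion; the compact support of $\varphi$ is essential to make the defining series finite and to ensure $\zeta$ actually belongs to $C^1(\mathbb{R}^n;L^q(\Omega))$ rather than some weaker space.
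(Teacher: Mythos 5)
Your argument for $(A)\Rightarrow(B)$ is correct and is a streamlined version of the paper's: the paper builds a stationary test function $\zeta_\gamma$ via the floor function (valid only for $\varphi$ supported in a single translated unit cell) and then assembles a general $\varphi$ through a partition of unity, whereas your $\zeta(y,\omega)=\sum_{k}\varphi(y-k)\eta(\tau(k)\omega)$ works for arbitrary compactly supported $\varphi$ in one stroke, so the partition-of-unity step disappears. The unfolding via stationarity, the measure-preserving change of variable in $\Omega$, and the duality in $L^q(\Omega)$ to pass from the integrated identity to an a.s.\ one are exactly the right tools and are deployed correctly.

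Your $(B)\Rightarrow(A)$ direction, however, contains a false intermediate claim. With your cutoff ($\chi_R\equiv 1$ on $[-R,R]^n$, supported in $[-2R,2R]^n$, $\|\nabla\chi_R\|_\infty\leq C/R$), the transition region has measure comparable to $(2R)^n$, so the weighted average $\frac{1}{(2R)^n}\int u\,\chi_R\,\partial_i\zeta\,dy$ does \emph{not} converge to $\mathbb{E}\big[\int_{[0,1)^n}u\,\partial_i\zeta\,dy\big]$. For a self-similar family $\chi_R=\chi_1(\cdot/R)$, the mean-value property (which is what Theorem~\ref{Birkhoff} actually supplies) gives $\frac{1}{(2R)^n}\int f\,\chi_R\,dy\to \big(\int\chi_1\big/2^n\big)\,\mathbb{E}\big[\int_{[0,1)^n}f\,dy\big]$, and $\int\chi_1>2^n$ strictly for any smooth cutoff of the stated type. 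That same multiplicative constant appears on both sides, so after cancelling it the identity $(A)$ is still recoverable, but the step ``Birkhoff forces the first and third terms to converge to $\mathbb{E}[\cdots]$ and $-\mathbb{E}[\cdots]$'' is simply not what Birkhoff gives here, and if the $\chi_R$ are not self-similar the weighted averages need not converge at all. The cleanest repair is the paper's: take $\chi_\ell\equiv 1$ on $Q_\ell=(-\ell,\ell)^n$, vanishing outside $Q_{\ell+1}$, with $\|\nabla\chi_\ell\|_\infty\leq 2$. Then the transition annulus has measure $o(|Q_\ell|)$, the $\chi_\ell$-weighted average agrees with the Birkhoff average over $Q_\ell$ up to $o(1)$ (apply Birkhoff to $|u\,\partial_i\zeta|$ over $Q_{\ell+1}$ and over $Q_\ell$ and subtract), and the boundary term vanishes for the same reason; after that your direct passage to the limit goes through. (The paper itself routes through expectations and Fatou before invoking Birkhoff, but either route works once the cutoff is thin.)
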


\begin{proof}
See Blanc, Le Bris, Lions \cite{BlancLeBrisLions1}. 
\end{proof} 

Similarly to the above theorem, we have the characterization of weak derivatives of stationary functions 
composed with stochastic deformations, given by the following 

\begin{theorem}
\label{648235azwsxqdgfd}
Let $u,v \in L^1_{\rm loc}(\mathbb{R}^n; L^p(\Omega))$
be stationary functions, $(p> 1)$. Then, for any $i \in \{1,\ldots,n \}$ fixed, the following sentences are equivalent: 
\begin{multline*}
(A) \quad \int_\Omega \int_{\Phi([0,1)^n, \omega)} u {\left( \Phi^{-1}\left( z, \omega \right), \omega \right)} \, { \frac{ \partial {\left( \zeta{\left( \Phi^{-1}(z,\omega),\omega \right)} \right)}}{\partial z_k} } \, dz \, d\mathbb{P}(\omega) 
\\[5pt]
= - \int_\Omega \int_{\Phi([0,1)^n, \omega)} v {\left( \Phi^{-1}\left( z, \omega \right), \omega \right)} \, {\zeta{\left( \Phi^{-1}(z,\omega),\omega \right)}} \, dz \, d\mathbb{P}(\omega),
\end{multline*}
for each stationary function $\zeta \in C^1( \mathbb{R}^n; L^q(\Omega))$,
with $1/p + 1/q = 1$. 
\begin{equation*}
(B) \quad \int_{\mathbb{R}^n} u {\left( \Phi^{-1}\left( z, \omega \right), \omega \right)} \, { \frac{\partial \varphi}{\partial z_k}(z) } \, dz 
= - \int_{\mathbb{R}^n} v {\left( \Phi^{-1}\left( z, \omega \right), \omega \right)} \, {\varphi(z)} \, dz, \hspace{12pt}
\end{equation*}
for any $\varphi \in C^1_{\rm c}(\mathbb{R}^n)$, and almost sure $\omega \in \Omega$.
\end{theorem}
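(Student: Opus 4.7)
The strategy is to perform the change of variables $z = \Phi(y,\omega)$ in both (A) and (B) and reduce the claim to a divergence-form extension of Theorem \ref{987987789879879879}. For $\omega$ fixed, set
\[
J(y,\omega) := \det \nabla\Phi(y,\omega), \qquad M_{jk}(y,\omega) := J(y,\omega)\,\bigl[(\nabla\Phi(y,\omega))^{-1}\bigr]_{jk}.
\]
Both are stationary, since $\nabla\Phi$ is stationary by Definition \ref{GradPhiStationary}(iv) and stationarity is preserved under products, determinants, and---thanks to the uniform lower bound in Definition \ref{GradPhiStationary}(ii)---inverses; in particular $u\,M_{jk}$ and $v\,J$ belong to $L^1_{\rm loc}(\mathbb{R}^n;L^p(\Omega))$ and are stationary. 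The chain rule
\[
\frac{\partial}{\partial z_k}\bigl[\zeta(\Phi^{-1}(z,\omega),\omega)\bigr]\Big|_{z=\Phi(y,\omega)} = \sum_{j=1}^{n}\bigl[(\nabla\Phi(y,\omega))^{-1}\bigr]_{jk}\,\frac{\partial\zeta}{\partial y_j}(y,\omega),
\]
combined with $dz = J(y,\omega)\,dy$, converts (A) into the equivalent identity
\[
(\mathrm{A}')\qquad \sum_{j=1}^{n}\int_\Omega\!\int_{[0,1)^n}\! u\,M_{jk}\,\partial_{y_j}\zeta\,dy\,d\mathbb{P} = -\int_\Omega\!\int_{[0,1)^n}\! v\,J\,\zeta\,dy\,d\mathbb{P},
\]
for every stationary $\zeta \in C^1(\mathbb{R}^n;L^q(\Omega))$, and---after the density argument discussed in the last paragraph---converts (B) into
\[
(\mathrm{B}')\qquad \sum_{j=1}^{n}\int_{\mathbb{R}^n}\! u\,M_{jk}\,\partial_{y_j}\varphi\,dy = -\int_{\mathbb{R}^n}\! v\,J\,\varphi\,dy,
\]
for every $\varphi \in C^1_c(\mathbb{R}^n)$ and almost every $\omega \in \Omega$.

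The remaining task is to prove the equivalence $(\mathrm{A}') \Longleftrightarrow (\mathrm{B}')$, which is the divergence-form analogue of Theorem \ref{987987789879879879}. I would establish it by a verbatim adaptation of the proof of that theorem. For $(\mathrm{A}') \Rightarrow (\mathrm{B}')$, I use the very same cell-adapted stationary test functions $\zeta_\gamma(y,\omega) = \varphi(y-\lfloor y-\gamma\rfloor)\,\rho(\tau(\lfloor y-\gamma\rfloor)\omega)$ for $\gamma \in \mathbb{Z}^n$; plugging $\zeta_\gamma$ into $(\mathrm{A}')$ and exploiting stationarity to transfer integration from $[0,1)^n$ to $(0,1)^n+\gamma$, together with the arbitrariness of $\rho\in L^q(\Omega)$, produces a local identity on each cell, which is then glued globally by a partition of unity. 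For the converse, I use the same cutoff family $\chi_\ell$ on $Q_\ell=(-\ell,\ell)^n$; the Leibniz boundary terms $(u\,M_{jk})\,\zeta\,\partial_{y_j}\chi_\ell$ are controlled exactly as before via $\|\nabla\chi_\ell\|_\infty \leq 2$ and vanish in mean over $|Q_\ell|^{-1}d\mathbb{P}$ as $\ell\to\infty$, while the surviving main terms are identified by the Birkhoff Theorem \ref{Birkhoff}. The only new feature is the summation over $j$, which is benign: linearity lets it pass harmlessly through every step of the original argument.

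The main technical point, and the step I expect to be most delicate, is the equivalence of (B) with $(\mathrm{B}')$. The substitution $z = \Phi(y,\omega)$ sends a test function $\varphi \in C^1_c(\mathbb{R}^n)$ to $\widetilde\varphi := \varphi \circ \Phi(\cdot,\omega)$, which is only Lipschitz with compact support because $\Phi(\cdot,\omega)$ is merely bi-Lipschitz. To upgrade $(\mathrm{B}')$ to this wider class of test functions I would mollify: $\widetilde\varphi_\varepsilon := \widetilde\varphi * \rho_\varepsilon \in C^\infty_c$ satisfies $\widetilde\varphi_\varepsilon \to \widetilde\varphi$ uniformly and $\nabla\widetilde\varphi_\varepsilon \to \nabla\widetilde\varphi$ almost everywhere, uniformly bounded by $\|\nabla\widetilde\varphi\|_\infty$, which is finite thanks to $\|\nabla\Phi\|_{L^\infty} \leq M$ from Definition \ref{GradPhiStationary}(iii). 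Dominated convergence against $u\,M_{jk}(\cdot,\omega)$ and $v\,J(\cdot,\omega)$, both lying in $L^1_{\rm loc}(\mathbb{R}^n)$ for a.e.\ $\omega$ by Fubini, then transfers $(\mathrm{B}')$ to Lipschitz test functions; the reverse substitution from (B) back to $(\mathrm{B}')$ is handled symmetrically using $\|(\nabla\Phi)^{-1}\|_{L^\infty}$, closing the loop.
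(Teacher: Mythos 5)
Your proposal is correct and matches the paper's (very terse) indication that the result follows "the same lines as the proof of Theorem \ref{987987789879879879} after the change of variables $y=\Phi^{-1}(z,\omega)$." You organize this cleanly: transport (A) and (B) to the $y$-variables to get the divergence-form identities $(\mathrm{A}')$ and $(\mathrm{B}')$ with the stationary, bounded weights $M_{jk}=J\,[(\nabla\Phi)^{-1}]_{jk}$ and $J=\det\nabla\Phi$, and then adapt the proof of Theorem \ref{987987789879879879} (the cell-adapted test functions $\zeta_\gamma$, the partition of unity, the cutoff/Birkhoff argument) to the summed form. You also correctly single out, and resolve by mollification, the point that the paper glosses over: after the bi-Lipschitz change of variables the transported test function is merely Lipschitz, so one must first extend $(\mathrm{B}')$ (resp.\ (B)) from $C^1_c$ to Lipschitz compactly supported test functions before the substitution closes the loop. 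One small correction: in the proof of Theorem \ref{987987789879879879} the cell-adapted functions $\zeta_\gamma$ are taken with $\gamma$ ranging over $\mathbb{R}^n$, not $\mathbb{Z}^n$; this is essential, since the open cubes $(0,1)^n+\gamma$ with $\gamma\in\mathbb{Z}^n$ omit the integer grid and therefore do not form an open cover of $\operatorname{supp}\varphi$, which would break the partition-of-unity step you invoke.
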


\subsection{$\Phi_\omega-$Two-scale Convergence}
\label{pud63656bg254v2v5}

In this subsection, we shall consider the two-scale convergence in a stochastic setting that is beyond of the classical stationary 
ergodic setting. The classical concept of two-scale convergence was introduced by Nguetseng~\cite{Nguetseng} and futher developed by Allaire~\cite{Allaire} 
to deal with periodic problems. 

\medskip
The notion of two-scale convergence has been successfully extended to non-periodic settings in several papers as in~\cite{FridSilvaVersieux,DiazGayte} in the ergodic algebra 
setting and in~\cite{BourgeatMikelicWright} in the stochastic setting. The main difference here with the earlier studies is 
that the test functions used here are random perturbations accomplished by stochastic diffeomorphisms of stationary 
functions. The main difficulty brought by this kind of test function is the lack of the stationarity property (see \cite{AndradeNevesSilva} for a deep discussion about that) which makes us 
unable to use the results described in~\cite{BourgeatMikelicWright} and the lack of a compatible topology with the probability space considered. This is overcome by using a 
compactification argument that preserves the ergodic nature of the setting involved. For this, we will make use of the following lemma, whose simple proof can be found in~\cite{AF}.

\begin{lemma}\label{TopologicalLemma}
Let $X_1,X_2$ be compact spaces, $R_1$ a dense subset of $X_1$ and $W:R_1\to X_2$. Suppose that for all $g\in C(X_2)$ the function $g\circ W$ is the restriction 
to $R_1$ of some (unique) $g_1\in C(X_1)$. Then $W$ can be uniquely extended to a continuous mapping $\underline{W}:X_1\to X_2$. Further, suppose in addition that 
$R_2$ is a dense set of $X_2$, $W$ is a bijection from $R_1$ onto $R_2$ and for all $f\in C(X_1)$, $f\circ W^{-1}$ is the restriction to $R_2$ of some (unique) 
$f_2\in C(X_2)$. Then, $W$ can be uniquely extended to a homeomorphism $\underline{W}:X_1\to X_2$. 
\end{lemma}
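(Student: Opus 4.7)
The plan is to construct $\underline{W}$ pointwise using nets and density. For $x \in X_1$, pick a net $(x_\alpha) \subset R_1$ with $x_\alpha \to x$ and try to set $\underline{W}(x) := \lim_\alpha W(x_\alpha)$. Existence of a cluster point $y \in X_2$ for $(W(x_\alpha))$ follows from compactness of $X_2$. The key observation for well-definedness is that, for every $g \in C(X_2)$, the hypothesis gives a unique $g_1 \in C(X_1)$ with $g_1|_{R_1} = g \circ W$, whence for any convergent subnet,
$$
g(y) = \lim_\beta g(W(x_{\alpha_\beta})) = \lim_\beta g_1(x_{\alpha_\beta}) = g_1(x).
$$
Assuming $X_2$ is Hausdorff (standard for ``compact spaces'' in this context, ensuring that $C(X_2)$ separates points), the cluster point $y$ is determined by $x$ alone, giving a well-defined map $\underline{W}: X_1 \to X_2$ extending $W$ and satisfying the tautological identity $g \circ \underline{W} = g_1$ on all of $X_1$.

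For continuity, given $x_\alpha \to x$ in $X_1$, it suffices to check that every convergent subnet $\underline{W}(x_{\alpha_\beta}) \to y$ satisfies $y = \underline{W}(x)$. For each $g \in C(X_2)$, the identity just derived yields $g(y) = \lim g_1(x_{\alpha_\beta}) = g_1(x) = g(\underline{W}(x))$, so $y = \underline{W}(x)$ by separation. Uniqueness of the extension follows because any two continuous maps into Hausdorff $X_2$ agreeing on the dense set $R_1$ must coincide. For the second statement, I apply the first part with the roles of $X_1$ and $X_2$ interchanged to $W^{-1}: R_2 \to X_1$, which is permitted by the additional hypothesis, obtaining a continuous $V: X_2 \to X_1$ extending $W^{-1}$. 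The compositions $V \circ \underline{W}$ and $\underline{W} \circ V$ are continuous self-maps of $X_1$ and $X_2$ that restrict to the identity on the dense subsets $R_1$ and $R_2$ respectively, hence equal the identity maps by Hausdorffness, so $\underline{W}$ is a homeomorphism with inverse $V$.

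The main obstacle is conceptual rather than computational: one must use the hypothesis to extract a single point value $\underline{W}(x) \in X_2$ from the collection of continuous extensions $\{g_1\}_{g \in C(X_2)}$. This is essentially a Gelfand-type recovery of a map from its action on the algebra of continuous functions, and it crucially relies on $C(X_2)$ separating points of $X_2$; once this separation is available, everything else reduces to routine net-chasing in compact Hausdorff spaces.
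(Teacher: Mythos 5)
Your proof is correct. Note that the paper itself does not prove this lemma — it defers to \cite{AF} — so there is no in-paper argument to compare against, but your approach (extracting $\underline{W}(x)$ as the unique point of $X_2$ on which every $g\in C(X_2)$ takes the value $g_1(x)$, then net-chasing for continuity, then applying the same construction to $W^{-1}$ and using density of $R_1$, $R_2$ to identify the compositions with identities) is the natural one and is sound. You are right to flag the Hausdorff assumption: the separation of points by $C(X_2)$, which is what lets you pin down $\underline{W}(x)$ and also what makes the "all convergent subnets agree $\Rightarrow$ the net converges" criterion valid, fails for general compact spaces. In the paper's context the compact spaces arise as closed subsets of Tychonoff cubes $[-1,1]^{\mathcal A}$ and so are automatically compact Hausdorff, which justifies the convention. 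Your use of the derived identity $g\circ\underline{W}=g_1$ on all of $X_1$ is the key step and is handled cleanly; the only small thing worth making explicit is that in the final composition step you are implicitly using $\underline W(r)=W(r)\in R_2$ and $V(s)=W^{-1}(s)\in R_1$ so that the compositions really do restrict to the identity on the dense subsets.
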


Now, we can prove the following result.

\begin{theorem}
\label{Compacification}
Let $\mathbb {S}\subset L^{\infty}(\R^n\times \Omega)$ be a countable set of stationary functions. Then there exists a compact (separable) topological space 
$\widetilde{\Omega}$ and one-to-one function $\delta:\Omega\to \widetilde{\Omega}$ with dense image satisfying the following properties: 
\begin{enumerate}
\item[(i)] The probability space $\Big(\Omega,\mathscr{F},\mathbb{P}\Big)$ and the ergodic dynamical system $\tau:\mathbb{Z}^n\times \Omega\to\Omega$ acting on it 
extends respectively to a Radon probability space $\Big(\widetilde{\Omega},\mathscr{B},\widetilde{\mathbb{P}}\Big)$ and to an ergodic dynamical system 
$\widetilde{\tau}:\mathbb{Z}^n\times \widetilde{\Omega}\to\widetilde{\Omega}$.
\item[(ii)] The stochastic deformation $\Phi:\R^n\times\Omega\to\R^n$ extends to a stochastic deformation 
$\tilde{\Phi}:\R^n\times\widetilde{\Omega}\to\R^n$ satisfying 
$$
\Phi(x,\omega)=\tilde{\Phi}(x,\delta(\omega)),
$$
for a.e. $\omega\in\Omega$.
\item[(iii)] Any function $f\in\mathbb{S}$ extends to a $\tilde{\tau}-$stationary function $\tilde{f}\in L^{\infty}(\widetilde{\Omega}\times \R^n)$ satisfying 
$$
\Medint_{\R^n}f\left(\Phi^{-1}(z,\omega),\omega\right)\,dz=\Medint_{\R^n}\tilde{f}\left(\tilde{\Phi}^{-1}(z,\delta(\omega)),\delta(\omega)\right)\,dz,
$$
for a.e. $\omega\in\Omega$.
\end{enumerate}
\end{theorem}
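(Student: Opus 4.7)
The strategy is to realize $\widetilde{\Omega}$ as the Gelfand spectrum of a separable commutative unital $C^{*}$-algebra $\mathcal{A}\subset L^{\infty}(\Omega)$ rich enough to encode $\mathbb{S}$, the components of $\nabla\Phi$, and the $\tau$-action. First, I would assemble the countable family $\mathcal{G}\subset L^{\infty}(\Omega)$ of evaluations $\omega\mapsto f(q,\omega)$ with $f\in\mathbb{S}\cup\{\partial_{j}\Phi_{i}\}_{i,j=1}^{n}$ and $q\in\mathbb{Q}^{n}$. By stationarity of each $f$ and of $\nabla\Phi$, $\mathcal{G}$ is closed under $a\mapsto a\circ\tau(k)$ (via the relabeling $q\mapsto q+k$). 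Let $\mathcal{A}$ be the smallest norm-closed, unital, $*$-subalgebra of $L^{\infty}(\Omega)$ containing $\mathcal{G}$; it is separable and $\tau$-invariant. By Gelfand--Naimark, $\mathcal{A}\cong C(\widetilde{\Omega})$ for a compact Hausdorff space $\widetilde{\Omega}$ (its character space), and the separability of $\mathcal{A}$ forces $\widetilde{\Omega}$ to be metrizable, hence separable.

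\medskip
\noindent Define $\delta(\omega)(a):=a(\omega)$; then $\delta\colon\Omega\to\widetilde{\Omega}$ is measurable with dense image. Genuine injectivity is obtained by enlarging $\mathcal{G}$ with a countable family separating points of $\Omega$ modulo a $\mathbb{P}$-null set (this is the natural setting for Lemma \ref{TopologicalLemma}, after restricting to the full-measure subset of $\Omega$). Put $\widetilde{\mathbb{P}}:=\delta_{*}\mathbb{P}$, which is automatically Radon because $\widetilde{\Omega}$ is compact metrizable. Each $\tau(k)$ induces a unital $*$-automorphism of $\mathcal{A}$, and by Gelfand duality corresponds to a homeomorphism $\tilde\tau(k)$ of $\widetilde{\Omega}$; the group property, $\widetilde{\mathbb{P}}$-invariance, and the compatibility $\tilde\tau(k)\circ\delta=\delta\circ\tau(k)$ transfer from the dense subset $\delta(\Omega)$. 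Ergodicity of $\tilde\tau$ follows by pull-back: a $\tilde\tau$-invariant Borel $\tilde{E}\subset\widetilde{\Omega}$ yields the $\tau$-invariant set $\delta^{-1}(\tilde{E})$, whose measure is $0$ or $1$ by ergodicity of $\tau$, so $\widetilde{\mathbb{P}}(\tilde{E})\in\{0,1\}$.

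\medskip
\noindent For (ii), after the harmless normalization $\Phi(0,\omega)=0$, each $\partial_{j}\Phi_{i}(q,\cdot)$ extends by construction to a continuous function on $\widetilde{\Omega}$; the uniform Lipschitz bound in $y$ propagates the extension to a continuous $\widetilde{\nabla\Phi}\colon\R^{n}\times\widetilde{\Omega}\to\R^{n\times n}$. I then set
$$
\tilde\Phi(y,\tilde\omega):=\int_{0}^{1}\widetilde{\nabla\Phi}(ty,\tilde\omega)\,y\,dt.
$$
The bounds $|\nabla\tilde\Phi|\leqslant M$ and $\det(\nabla\tilde\Phi)\geqslant\nu$ and the stationarity of $\widetilde{\nabla\Phi}$ with respect to $\tilde\tau$ are closed conditions that survive the continuous extension from the dense image $\delta(\Omega)$. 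For (iii), each $f\in\mathbb{S}$ extends analogously to a $\tilde\tau$-stationary $\tilde{f}\in L^{\infty}(\R^{n}\times\widetilde{\Omega})$ with $\tilde{f}(\cdot,\delta(\omega))=f(\cdot,\omega)$; the identity is then obtained by applying Lemma \ref{phi2} in both settings and using $\widetilde{\mathbb{P}}=\delta_{*}\mathbb{P}$ together with $\tilde\Phi(\cdot,\delta(\omega))=\Phi(\cdot,\omega)$ to match numerators and denominators.

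\medskip
\noindent Two technical obstacles deserve attention. First, securing honest injectivity of $\delta$ rather than injectivity only modulo null sets; this is arranged by enriching $\mathcal{G}$ with a countable separating family and restricting $\Omega$ to a full-measure subset, exactly the structure codified by Lemma \ref{TopologicalLemma}. Second, and more delicate, is verifying that $\tilde\Phi(\cdot,\tilde\omega)$ is a bi-Lipschitz diffeomorphism for \emph{every} $\tilde\omega\in\widetilde{\Omega}$, not merely on the dense image $\delta(\Omega)$. Here the uniform positive lower bound on $\det(\nabla\Phi)$ is crucial: it passes to the extension, so local invertibility is uniform in $\tilde\omega$, and a proper-map argument (together with $\tilde\Phi(y,\tilde\omega)/|y|\to1$ at infinity, inherited from the identity-like behaviour on $\delta(\Omega)$) upgrades local to global invertibility.
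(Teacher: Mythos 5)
Your overall architecture coincides with the paper's: you compactify $\Omega$ via the character space of a separable unital algebra generated by the coefficients, push forward the measure and the $\tau$-action, and show ergodicity is preserved by pull-back. Phrasing it as Gelfand--Naimark rather than the paper's explicit Tychonoff construction of $\widetilde{\Omega}\subset[-1,1]^{\mathcal{A}}$ is cosmetic; the structures are canonically isomorphic, and your remark that separability of $\mathcal{A}$ forces metrizability of $\widetilde{\Omega}$ is a correct (and slightly sharper) observation.

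The genuine gap is the construction of the extensions in $y$, i.e.\ parts (ii) and (iii). You generate $\mathcal{A}$ from the raw evaluations $\omega\mapsto f(q,\omega)$ and $\omega\mapsto\partial_j\Phi_i(q,\omega)$, $q\in\mathbb{Q}^n$, and then claim that ``the uniform Lipschitz bound in $y$ propagates the extension to a continuous $\widetilde{\nabla\Phi}$'' and that each $f\in\mathbb{S}$ ``extends analogously.'' But $\nabla\Phi$ and the functions $f\in\mathbb{S}$ are merely $L^\infty$ in $y$; there is no Lipschitz (or any kind of equicontinuity) hypothesis in $y$. Consequently the family $\{f(q,\cdot)\}_{q\in\mathbb{Q}^n}$ has no uniform modulus of continuity, the pointwise evaluations $f(q,\cdot)$ are not canonically defined for fixed $q$ when $f$ is only an $L^\infty$-class, and there is no way to pass from $\mathbb{Q}^n$ to $\R^n$ on the compactification. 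This is precisely what the paper's mollification step is for: it replaces $f$ by $f_j(y,\omega)=\int f(y+x,\omega)\rho_j(x)\,dx$, which is smooth in $y$ with an $\omega$-uniform modulus of continuity; one generates $\mathcal{A}$ from the $f_j(q,\cdot)$, extends each $\widetilde{f_j}(\cdot,\tilde\omega)$ from $\mathbb{Q}^n$ to $\R^n$ by uniform continuity, and then recovers $\tilde f$ as the $L^1_{\mathrm{loc}}$-limit of $\widetilde{f_j}$ via the Cauchy estimate transferred through $\widetilde{\mathbb{P}}=\delta_*\mathbb{P}$. Without something equivalent, your proposal does not actually produce $\tilde f$ or $\widetilde{\nabla\Phi}$ off the measure-zero grid $\mathbb{Q}^n\times\widetilde{\Omega}$. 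The same omission undermines the integral formula $\tilde\Phi(y,\tilde\omega)=\int_0^1\widetilde{\nabla\Phi}(ty,\tilde\omega)\,y\,dt$, which needs $\widetilde{\nabla\Phi}(\cdot,\tilde\omega)$ to be a genuine $L^\infty$ function of $y$, not just a collection of rational-point values. (Secondary remarks: the claim ``$\tilde\Phi(y,\tilde\omega)/|y|\to 1$'' has no basis in Definition~\ref{GradPhiStationary}; global invertibility should instead be argued from $\det\nabla\tilde\Phi\geq\nu$ and $|\nabla\tilde\Phi|\leq M$, which bound $(\nabla\tilde\Phi)^{-1}$ uniformly. Also, the normalization $\Phi(0,\cdot)=0$ is not clearly harmless because $\Phi$ itself --- unlike $\nabla\Phi$ --- is not stationary; the paper's identity $\Phi(x,\omega)=\tilde\Phi(x,\delta(\omega))$ is stated for the original $\Phi$, so one should either adjoin $\Phi(0,\cdot)$ to the algebra or justify the normalization.)
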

\begin{proof}
1.  Let $\mathbb{S}$ be the set of the lemma.  Given $f\in \mathbb{S}$, define 
$$
f_j(y,\omega):=\int_{\R^n}f(y+x,\omega)\,\rho_j(x)\,dx,
$$
where $\rho_j$ is the classical approximation of the identity in $\R^n$. Note that for a.e. $y\in\R^n$, we have that $f_j(y,\cdot)\to f(y,\cdot)$ in $L^1(\Omega)$ as $j\to\infty$.  
Define $\mathcal{A}$ as the closed algebra with unity generated by the set 
$$
\Big\{ f_j(y,\cdot);\,j\ge 1,y\in\mathbb{Q}^n,f\in\mathbb{S}\Big\}\cap \Big\{\partial_j \Phi_i(y,\cdot);\, 1\le j,i\le n, y\in\mathbb{Q}^n\Big\}.
$$
Since $[-1,1]$ is a compact set, by the well known Tychonoff's Theorem, the set 
$$
[-1,1]^{\mathcal{A}}:=\Big\{\text{the functions $\gamma:\mathcal{A}\to[-1,1]$}\Big\}
$$
is a compact set in the product topology. Define $\delta:\Omega\to[-1,1]^{\mathcal{A}}$ by 
$$
\delta(\omega)(g):=\left\{\begin{array}{rc}
\frac{g(\omega)}{\|g{\|}_{\infty}},&\mbox{if}\quad g\neq 0,\\
0,&\mbox{if}\quad g=0.
\end{array}\right.
$$
We may assume that the algebra $\mathcal{A}$ distinguishes between points of $\Omega$, that is, given any two distinct points $\omega_1,\omega_2\in\Omega$, there exists 
$g\in\mathcal{A}$ such that $g(\omega_1)\neq g(\omega_2)$. In the case that it is not true we may replace $\Omega$ by its quotient by a trivial equivalence relation, in a standard 
way, and we proceed correspondingly with the $\sigma-$algebra $\mathscr{F}$ and with the probability measure $\mathbb{P}$. Thus, the function $\delta$ is one-to-one. Define 
$$
\widetilde{\Omega}:=\overline{\delta(\Omega)}.
$$
Then, we can see that the set $\Omega$ inherits all topological features of the compact space $\widetilde{\Omega}$ in a natural way which allows us to identify it homeomorphically with 
the image $\delta(\Omega)$.

2.  Define the mapping $i:\mathcal{A}\to C(\delta(\Omega))$ by 
$$
i(g)(\delta(\omega)):=g(\omega).
$$
We claim that there exists a continuous function $\tilde{g}:\widetilde{\Omega}\to \R$ such that 
$$
i(g)=\tilde{g}\,\text{on $\delta(\Omega)$}.
$$
In fact, take $g\in\mathcal{A}$ and $Y:=\overline{g(\Omega)}$. Define the function $f^{*}:C(Y)\to\mathcal{A}$ by 
$$
f^{*}(h):=h\circ g\,\text{(the algebra structure is used!)}
$$
Hence, we can define $f^{**}:[-1,1]^{\mathcal{A}}\to[-1,1]^{C(Y)}$ by 
$
f^{**}(h):=h\circ f^{*}.
$
Note that the function $f^{**}$ is a continuous function. In order to see that, we highlighted that it is known that a function $H$ from a topological space 
to a product space $\otimes_{\alpha\in \mathcal{I}}X_{\alpha}$ is continuous if and only if each component $\pi_{\alpha}\circ H:=H_{\alpha}$ is 
continuous. Hence, if $\alpha\in C(Y)$ then the projection function $f^{**}_{\alpha}$ must satisfy 
\begin{eqnarray*}
&&f^{**}_{\alpha}(h):=\left(\pi_{\alpha}\circ f^{**}\right)(h)=\pi_{\alpha}\circ\left(f^{**}(h)\right)=\pi_{\alpha}\left(h\circ f^{*}\right)\\
&&\qquad=\left(h\circ f^{*}\right)(\alpha)=h\left(f^{*}(\alpha)\right)=h\left(\alpha\circ g\right)=\pi_{\alpha\circ g}(h).
\end{eqnarray*}
Now, consider the function $\tilde{\delta}:Y\to [-1,1]^{C(Y)}$ given by 
$$
\tilde{\delta}(y)(h):=\left\{\begin{array}{rc}
\frac{h(y)}{\|h{\|}_{\infty}},&\mbox{if}\quad h\neq 0,\\
0,&\mbox{if}\quad h=0.
\end{array}\right.
$$
Since the algebra $C(Y)$ has the following property: If $F\subset Y$ is a closed set and $y\notin F$ then 
$f(y)\notin \overline{f(F)}$ for some $f\in C(Y)$, we can conclude that the function $\tilde{\delta}$ is a homeomorphism onto its image. Furthermore, 
given $\omega\in\Omega$ we have that $\left(f^{**}\circ \delta\right)(\omega)=f^{**}\left(\delta(\omega)\right)=\delta(\omega)\circ f^{*}$. Hence, if $0\neq h\in C(Y)$ 
it follows that 
\begin{eqnarray*}
&&\left(f^{**}\circ\delta\right)\big(\omega\big)(h)=\left(\delta(\omega)\circ f^{*}\right)(h)=\delta(\omega)\left(f^{*}(h)\right)\\
&&\quad=\delta(\omega)\left(h\circ g\right)=\frac{\left(h\circ g\right) (\omega)}{\|h\circ g {\|}_{\infty}}=\tilde{\delta}\left(g(\omega)\right)(h).
\end{eqnarray*}
Thus, we see that ${\tilde{\delta}}^{-1}\circ f^{**}=i(g)$. Defining $\tilde{g}:={\tilde{\delta}}^{-1}\circ f^{**}$ we have clearly that 
$i:\mathcal{A}\to C(\widetilde{\Omega})$ is a one-to-one isometry satisfying $i(g)=\tilde{g}$ and our claim is proved. Moreover, it is easy to see that $i(\mathcal{A})$ is 
an algebra of functions over $C(\widetilde{\Omega})$ containing the unity. As before, if $i(\mathcal{A})$ does not separate points of $\widetilde{\Omega}$, then we may replace 
$\widetilde{\Omega}$ by its quotient $\widetilde{\Omega}/\sim$, where 
$$
\tilde{\omega_1}\sim\tilde{\omega_2}\,\Leftrightarrow\, \tilde{g}(\tilde{\omega_1})=\tilde{g}(\tilde{\omega_2})\,\forall g\in\mathcal{A}.
$$
Therefore, we can assume that $i(\mathcal{A})$ separates the points of $\widetilde{\Omega}$. Hence by the Stone's Theorem 
we must have $i(\mathcal{A})=C(\widetilde{\Omega})$.

\medskip
3. Define $\widetilde{\tau}:\mathbb{Z}^n\times\delta(\Omega)\to \delta(\Omega)$ by
$
\widetilde{\tau}_k\left(\delta(\omega)\right):=\delta(\tau_k\omega).
$
It is easy to see that 
$$
\widetilde{\tau}_{k_1+k_2}(\delta(\omega))=\widetilde{\tau}_{k_1}\Big(\widetilde{\tau}_{k_2}(\delta(\omega))\Big),
$$
for all $k_1,k_2\in\mathbb{Z}^n$ and $\omega\in\Omega$. Since $\tilde{g}\circ{\widetilde{\tau}_k}=\widetilde{g\circ{\tau}_k}$ for all $g\in\mathcal{A}$ and 
$k\in\mathbb{Z}^n$, the lemma~\ref{TopologicalLemma} allows us to extend the mapping $\widetilde{\tau}_k$ from $\delta(\Omega)$ to $\widetilde{\Omega}$ satisfying the 
group property $\widetilde{\tau}_{k_1+k_2}=\widetilde{\tau}_{k_1}\circ{\widetilde{\tau}}_{k_2}$. Given a borelian set $\tilde{A}\subset{\widetilde{\Omega}}$ and defining 
$\widetilde{\mathbb{P}}(\tilde{A}):=\mathbb{P}(\delta^{-1}(\tilde{A}\cap \delta(\Omega)))$, we can deduce that 
$\widetilde{\mathbb{P}}\circ {\widetilde{\tau}_k}=\widetilde{\mathbb{P}}$. Thus, the mapping $\widetilde{\tau}_k$ is an ergodic dynamical system over the Radon probability 
space $\Big(\widetilde{\Omega},\mathscr{B},\widetilde{\mathbb{P}}\Big)$. Thus, we have concluded the proof of the item (i). 

4. Now, note that for each $\omega\in\widetilde{\Omega}$ and each integer $j\ge 1$, the function $f_j(\cdot,\omega)$ is uniformly continuous over $\mathbb{Q}^n$. Hence, 
it can be extended uniquely to a function $\widetilde{f_j}(\cdot,\omega)$ defined in $\R^n$ that satisfies   
$$
\limsup_{j,l\to\infty}\int_{[0,1)^n\times \widetilde{\Omega}}|\widetilde{f_j}(y,\omega)-\widetilde{f_l}(y,\omega)|\,d\widetilde{\mathbb{P}}(\omega)\,dy=0.
$$
Therefore, there exists a $\widetilde{\tau}$-stationary function $\widetilde{f}\in L^1_{\loc}\left(\R^n\times \widetilde{\Omega}\right)$, such that 
$\widetilde{f_j}\to \widetilde{f}$ as $j\to\infty$ in $L^1_{\loc}(\R^n\times \widetilde{\Omega})$. Since $\| \widetilde{f_j}{\|}_{\infty}\le \| {f}{\|}_{\infty}$, for all $j\ge 1$ we have 
that $\widetilde{f}\in L^{\infty}(\R^n\times \widetilde{\Omega})$. In the same way, the stochastic deformation $\Phi:\R^n\times \Omega\to \R^n$ extends to a stochastic 
deformation $\tilde{\Phi}:\R^n\times \widetilde{\Omega}\to \R^n$ satisfying $\Phi(y,\omega)=\tilde{\Phi}(y,\delta(\omega))$ for all $\omega\in\Omega$ and 
$$
\Medint_{\R^n}f\left(\Phi^{-1}(z,\omega),\omega\right)\,dz=\Medint_{\R^n}\tilde{f}\left(\tilde{\Phi}^{-1}(z,\delta(\omega)),\delta(\omega)\right)\,dz,
$$
for a.e. $\omega\in\Omega$. This, completes the proof of the theorem \eqref{Compacification}.
\end{proof}

In practice, in the present context, the set $\mathbb{S}$ shall be a countable set generated by the coefficients of our equation $\eqref{jhjkhkjhkj765675233}$  and by the 
eigenfunctions of the spectral equation associated to it. Thus, the Theorem \eqref{Compacification} allow us to suppose without loss of generality that our probability 
space $\Big(\Omega,\mathscr{F},\mathbb{P}\Big)$ is a separable compact space.  Using the Ergodic Theorem, given a stationary function $f\in L^{\infty}(\R^n\times\Omega)$ 
there exists a set of full measure $\Omega_f\subset \Omega$ such that 
\begin{equation}\label{Compacification1}
\Medint_{\R^n}f\left(\Phi^{-1}(z,\tilde{\omega}),\tilde{\omega}\right)\,dz= c_{\Phi}^{-1}\int_{\Omega}\int_{\Phi([0,1)^n,\omega)}f\left(\Phi^{-1}(z,\omega),\omega\right)\,dz\,d\mathbb{P}(\omega),
\end{equation}
for almost all $\tilde{\omega}\in{\Omega}_f$. Due to the separability of the probability compact space $\Big(\Omega,\mathscr{F},\mathbb{P}\Big)$, we can find a set 
$\mathbb{D}\subset C_b(\R^n\times\Omega)$ such that:
\begin{itemize}
\item Each $f\in\mathbb{D}$ is a stationary function. 
\item $\mathbb{D}$ is a countable and dense set in $C_0\big([0,1)^n\times\Omega\big)$.
\end{itemize}
In this case, there exists a set $\Omega_0\subset\Omega$ of full measure such that the equality \eqref{Compacification1} holds for any $\tilde{\omega}\in\Omega_0$ and 
$f\in\mathbb{D}$.

\medskip
Now, we proceed with the definition of the two-scale convergence in this scenario of stochastically deformed. In what follows, the set $O\subset\R^n$ is an open set. 
\begin{definition}
\label{two-scale}
Let $1< p <\infty$ and $v_{\ve}:O\times\Omega\to \mathbb{C}$ be a sequence such that $v_{\ve}(\cdot,\tilde{\omega})\in L^p(O)$. 
The sequence 
$\{v_{\ve}(\cdot,\tilde{\omega}){\}}_{\ve>0}$ is said to $\Phi_\omega-$two-scale converges to a stationary function $V_{\tilde{\omega}}\in L^p\left(O\times [0,1)^n\times\Omega\right)$,
when for a.e. $\tilde{\omega}\in\Omega$ holds the following
$$
\begin{aligned}
&\lim_{\ve\to 0}\int_{O}v_{\ve}(x,\tilde{\omega})\,\varphi(x)\,\Theta\left(\Phi^{-1}\left(\frac{x}{\ve},\tilde{\omega}\right),\tilde{\omega}\right)\,dx
\\
&= c_{\Phi}^{-1}\int_{O\times\Omega}\int_{\Phi([0,1)^n,\omega)}  \!\!\! V_{\tilde{\omega}}\left(x,\Phi^{-1}\left(z,\omega\right),\omega\right)\,\varphi(x)\,
\Theta(\Phi^{-1}(z,\omega),\omega)\,dz\,d{\mathbb{P}(\omega)}\,dx,
\end{aligned}
$$
for all $\varphi\in C_c^{\infty}(O)$ and $\Theta\in L^{q}_{\loc}(\R^n\times\Omega)$ stationary. Here, $p^{-1}+q^{-1}=1$ and 
$c_{\Phi}:=\det\Big(\int_{[0,1)^n\times \Omega}\nabla \Phi(y,\omega)\,d{\mathbb{P}(\omega)}\,dy\Big)$.
\end{definition}
\begin{remark}
From now on, we shall use the notation 
\begin{equation*}
			v_{\varepsilon}(x,\widetilde{\omega}) \; \xrightharpoonup[\varepsilon \to 0]{2-{\rm s}}\; V_{\widetilde{\omega}} {\left(x,\Phi^{-1}(z,\omega),\omega \right)},
		\end{equation*}
to indicate that $v_{\varepsilon}(\cdot,\tilde{\omega})$ $\Phi_\omega-$two-scale converges to $V_{\tilde{\omega}}$.
\end{remark}
The most important result about the two-scale convergence needed in this paper is the following compactness theorem which 
generalize the corresponding one for the 
deterministic case in~\cite{DiazGayte} (see Theorem 4.8)
and the corresponding one for the stochastic case in~\cite{BourgeatMikelicWright} (see Theorem 3.4). 
\begin{theorem}
\label{TwoScale}
Let $1< p <\infty$ and $v_{\ve}:O\times\Omega\to \mathbb{C}$ be a sequence such that
$$
\sup_{\ve>0}\int_{O}|v_{\ve}(x,\tilde{\omega})|^p\,dx<\infty,
$$
for almost all $\tilde{\omega}\in\Omega$. 
Then, for almost all $\tilde{\omega}\in\Omega_0$, there exists a subsequence $\{v_{\ve'}(\cdot,\tilde{\omega}){\}}_{\ve'>0}$, which may depend on $\tilde{\omega}$, and a 
stationary function $V_{\tilde{\omega}}\in L^p(O\times [0,1)^n\times\Omega)$, such that 
$$
v_{\varepsilon}(x,\widetilde{\omega}) \; \xrightharpoonup[\varepsilon' \to 0]{2-{\rm s}}\; V_{\widetilde{\omega}} {\left(x,\Phi^{-1}(z,\omega),\omega \right)}.
$$
\end{theorem}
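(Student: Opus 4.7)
The plan is to adapt the classical Nguetseng--Allaire scheme, exploiting the compactification established in Theorem \ref{Compacification} so that, up to enlarging the probability space, $\Omega$ is a separable compact space and the countable dense collection $\mathbb{D}\subset C_b(\R^n\times\Omega)$ of continuous stationary functions, together with a full-measure set $\Omega_0\subset\Omega$, gives the pointwise validity of the mean-value identity \eqref{Compacification1} for every $\Theta\in\mathbb{D}$.

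Fix $\tilde{\omega}\in\Omega_0$ such that $\sup_{\ve>0}\|v_{\ve}(\cdot,\tilde\omega)\|_{L^p(O)}<\infty$. For $(\varphi,\Theta)\in C_c^{\infty}(O)\times\mathbb{D}$, define the linear form
$$
T_{\ve}(\varphi,\Theta):=\int_{O}v_{\ve}(x,\tilde\omega)\,\varphi(x)\,\Theta\!\left(\Phi^{-1}\!\left(\tfrac{x}{\ve},\tilde\omega\right),\tilde\omega\right)dx.
$$
Hölder's inequality gives
$$
|T_{\ve}(\varphi,\Theta)|\le \|v_{\ve}(\cdot,\tilde\omega)\|_{L^p(O)}\left(\int_{O}|\varphi(x)|^{q}\,\bigl|\Theta\!\left(\Phi^{-1}(\tfrac{x}{\ve},\tilde\omega),\tilde\omega\right)\bigr|^{q}dx\right)^{1/q}.
$$
Since $|\Theta|^{q}$ is a bounded stationary function, Lemma \ref{phi2} (applied with the choice of $\tilde\omega\in\Omega_0$) together with a standard Riemann-sum argument ensures that, as $\ve\to 0$, the integral on the right converges to
$$
c_{\Phi}^{-1}\!\int_{\Omega}\!\!\int_{\Phi([0,1)^n,\omega)}\!\!|\Theta(\Phi^{-1}(z,\omega),\omega)|^{q}\,dz\,d\mathbb{P}(\omega)\cdot\int_{O}|\varphi(x)|^{q}dx.
$$
Hence $\{T_{\ve}(\varphi,\Theta)\}_{\ve>0}$ is bounded, uniformly over any fixed $(\varphi,\Theta)$.

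Choose a countable dense set $\{\varphi_m\}_{m\ge 1}\subset C_c^{\infty}(O)$ (in the $L^{q}(O)$ topology) and enumerate $\mathbb{D}=\{\Theta_k\}_{k\ge 1}$. By Cantor's diagonal extraction, there exists a subsequence $\ve'\to 0$ (depending on $\tilde\omega$) such that $T_{\ve'}(\varphi_m,\Theta_k)$ converges to some limit $T(\varphi_m,\Theta_k)$ for every pair $(m,k)$. The preceding bound and the density of $\{\varphi_m\}$ and $\mathbb{D}$ (the latter in the $L^{q}$-norm of the compactified probability space induced by the measure $d\mu(z,\omega):=\mathbf{1}_{\Phi([0,1)^n,\omega)}(z)\,dz\,d\mathbb{P}(\omega)$) permit one to extend $T$ to a bounded bilinear form on $L^{q}(O)\times L^{q}_{\#}$, where $L^{q}_{\#}$ denotes the completion of $\mathbb{D}$ in the stationary $L^{q}$-norm suggested by \eqref{SPACEHPHI}. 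Identifying $L^{q}(O)\hat{\otimes} L^{q}_{\#}$ with an appropriate $L^{q}$ space, the Riesz representation theorem delivers a stationary function $V_{\tilde\omega}\in L^{p}(O\times[0,1)^n\times\Omega)$ satisfying the identity in Definition \ref{two-scale}; the density argument then upgrades the convergence from the pairs $(\varphi_m,\Theta_k)$ to arbitrary $(\varphi,\Theta)\in C_c^{\infty}(O)\times\bigl(\text{stationary }L^{q}\bigr)$.

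The main obstacle, in my view, lies in setting up the functional-analytic framework rigorously: one must identify the Banach space to which the limit $V_{\tilde\omega}$ belongs, verify that the representation is compatible with the non-stationary test composition $\Theta\circ\Phi^{-1}$, and confirm the stationarity of $V_{\tilde\omega}$ in its $(y,\omega)$ variables. Stationarity is obtained by selecting translated test functions $\Theta(\cdot+k,\cdot)=\Theta(\cdot,\tau(k)\cdot)$, observing that the integral $T_{\ve}$ is invariant in the limit under such translations thanks to the measure-preserving property of $\tau$ and to the cell-change-of-variables formula under $\Phi$, and then invoking the density of $\mathbb{D}$. The compactification is what makes the diagonal extraction legitimate and the dense set $\mathbb{D}$ available; without it the joint measurability and separability needed for the diagonal procedure would be unavailable.
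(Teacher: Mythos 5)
Your proposal follows essentially the same route as the paper's proof: H\"older plus the ergodic theorem to bound the linear forms $T_{\ve}(\varphi,\Theta)$, a Cantor diagonal argument over the countable dense set $\Xi\times\mathbb{D}$ supplied by Theorem~\ref{Compacification}, extension to a bounded functional on $L^q(O\times[0,1)^n\times\Omega)$ by density, Riesz representation to produce $V_{\tilde{\omega}}\in L^p(O\times[0,1)^n\times\Omega)$, and a final density argument to upgrade the convergence to general test functions. Two small remarks on points where your presentation diverges from the paper's, neither of which is a genuine gap: (i) the identification $L^q(O)\hat{\otimes}L^q_{\#}\cong L^q(O\times[0,1)^n\times\Omega)$ is not literally valid for the projective tensor norm when $q\neq 2$, but this is immaterial because the H\"older bound you derived is already stated with respect to the $L^q(O\times[0,1)^n\times\Omega)$ norm, so the functional extends to that Lebesgue space directly by density, which is how the paper phrases it; (ii) the stationarity of $V_{\tilde{\omega}}$ is obtained in the paper more economically than your translated-test-function argument, simply by extending the Riesz representative from $O\times[0,1)^n\times\Omega$ to $O\times\R^n\times\Omega$ via the formula $V_{\tilde{\omega}}(x,y,\omega)=V_{\tilde{\omega}}\left(x,y-\lfloor y\rfloor,\tau_{\lfloor y\rfloor}\omega\right)$, which is stationary by construction and requires no further limit.
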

\begin{proof}
1. We begin by fixing $\tilde{\omega}\in\Omega_0$. Due to our assumption, there exists ${ c(\widetilde{\omega})>0 }$, such that 
for all $\varepsilon >0$
		\begin{equation*}
			{\left\Vert v_\varepsilon(\cdot,\widetilde{\omega}) \right\Vert}_{L^p(O)} \leqslant c(\widetilde{\omega}).
		\end{equation*}
Now, taking $\phi\in \Xi\times \mathbb{D}$ with $\Xi\subset C_c^{\infty}(O)$ dense in $L^q(O)$, we have after applying the H\"older inequality and the Ergodic Theorem, 
\begin{multline}
\label{6742938tyuer}
			\underset{\varepsilon \to 0}{\limsup} {\vert \int_{O} v_\varepsilon(x,\widetilde{\omega}) \, { \phi{\left( x,\Phi^{-1}{\left( \frac{x}{\varepsilon}, \widetilde{\omega} \right)}, \widetilde{\omega} \right)} } dx \vert} 
			\\
		       \leq c(\widetilde{\omega}) {\left[ \underset{\varepsilon \to 0}{\limsup}\int_{O} {\left\vert \phi{\left( x,\Phi^{-1}{\left( \frac{x}{\varepsilon}, \widetilde{\omega} \right)}, \widetilde{\omega} \right)} \right\vert}^q dx \right]}^{1/q} \hspace{100pt} 
		       \\
			= c(\widetilde{\omega}) {\left[ c_\Phi^{-1} \int_{O} \int_\Omega \int_{\Phi([0,1)^n,\omega)} {\left\vert \phi{\left( x,\Phi^{-1}(z,\omega),\omega \right)} \right\vert}^q dz \, d\mathbb{P}(\omega) \, dx \right]}^{1/q}.
		\end{multline}
Thus, the use of the enumerability of the set $\Xi\times \mathbb{D}$ combined with a diagonal argument yields us a subsequence $\{\ve'\}$ (maybe depending of $\tilde{\omega}$) 
such that the functional $\mu:\Xi\times \mathbb{D}\to \mathbb{C}$ given by 
\begin{equation}\label{Two-scale1}
		\langle\mu,\phi\rangle:=	\lim_{\varepsilon^{\prime} \to 0}\int_{O} v_{\varepsilon^{\prime}}(x,\widetilde{\omega}) \, { \phi{\left( x, \Phi^{-1}{\left( \frac{x}{\varepsilon^{\prime}}, \widetilde{\omega} \right)}, \widetilde{\omega} \right)} } dx
		\end{equation}
is well-defined and bounded with respect to the norm $\|\cdot{\|}_q$ defined as 
$$
\|\phi{\|}_q:= \big[ c_\Phi^{-1} \int_{O} \int_\Omega \int_{\Phi([0,1)^n,\omega)} {\left\vert \phi{\left( x,\Phi^{-1}(z,\omega),\omega \right)} \right\vert}^q dz \, d\mathbb{P}(\omega) \, dx \big]^{1/q}
$$
by \eqref{6742938tyuer}. Since the set $\Xi\times \mathbb{D}$ is dense in $L^q\left(O\times[0,1)^n\times\Omega\right)$, we can extend the functional $\mu$ to a bounded 
functional $\tilde{\mu}$ over $L^q\left(O\times[0,1)^n\times\Omega\right)$. Hence, we find $V_{\tilde{\omega}}\in L^p(O\times [0,1)^n\times\Omega)$ which can be extended 
to $O\times\R^n\times\Omega$ in a stationary way by setting 
$$
V_{\tilde{\omega}}(x,y,\omega)=V_{\tilde{\omega}}\left(x,y-\left\lfloor y \right\rfloor, \tau_{\left\lfloor y \right\rfloor}\omega\right),
$$
and satisfying for all $\phi\in L^q\left(O\times[0,1)^n\times\Omega\right)$, 
$$
\langle \tilde{\mu},\phi\rangle\!\!= c_{\Phi}^{-1}\int_{O\times\Omega}\int_{\Phi([0,1)^n,\omega)}
\!\!\! \!V_{\tilde{\omega}}\left(x,\Phi^{-1}\left(z,\omega\right),\omega\right)
\phi\left(x,\Phi^{-1}(z,\omega),\omega\right)dz d\mathbb{P}(\omega) dx.
$$

2. Now, take $\varphi\in C^{\infty}_c(O)$ and $\Theta\in L^{q}_{\loc}(\R^n\times\Omega)$ a $\tau$-stationary function. Since the set 
$\Xi\times \mathbb{D}$ is dense in $L^q\left(O\times[0,1)^n\times\Omega\right)$, we can pick up a sequence 
$\{(\varphi_j,\Theta_j){\}}_{j\ge1}\subset \Xi\times \mathbb{D}$ such that 
$$
\lim_{j\to\infty}(\varphi_j,\Theta_j)=(\varphi,\Theta)\quad\text{in $L^q\Big(O\times[0,1)^n\times\Omega\Big)$}.
$$
Then, observing that 
\begin{eqnarray*}
&&\limsup_{\ve'\to 0}\Big| \int_{O}v_{\ve'}(x,\tilde{\omega})\varphi(x)\Theta\left(\Phi^{-1}\left(\frac{x}{\ve'},\tilde{\omega}\right),\tilde{\omega}\right)\,dx
\\
&&-\int_{O}v_{\ve'}(x,\tilde{\omega})\varphi_j(x)\Theta_j\left(\Phi^{-1}\left(\frac{x}{\ve'},\tilde{\omega}\right),\tilde{\omega}\right)\,dx\Big|
\\
&& \le c \|\varphi-\varphi_j{\|}_{L^q(O)}
[ c_\Phi^{-1} \int_\Omega \int_{\Phi([0,1)^n,\omega)} {\left\vert (\Theta-\Theta_j){\left(\Phi^{-1}(z,\omega),\omega \right)} \right\vert}^q dz \, d\mathbb{P}(\omega)]^{1/q},
\end{eqnarray*}
where $c= c(\tilde{\omega})$ is a positive constant. 
Then, combining the previous equality with the \eqref{Two-scale1}, we concluded the proof of the theorem.
\end{proof}

Let us remember the following space (see Remark \ref{REMFPHI}) 
$$
\mathcal{H}:=\Big\{w\in H^1_{\loc}(\R^n;L^2(\Omega));\,\text{$w$ is a stationary function}\Big\},
$$
which is a Hilbert space with respect to the following inner product 
$$
\begin{aligned}
\langle w,v{\rangle}_{\mathcal{H}}:=\int_{[0,1)^n\times\Omega} \!\! & \nabla_{\!y} w(y,\omega)\cdot \nabla_y v(y,\omega)\,d{\mathbb{P}}(\omega)\,dy
\\
& +\int_{[0,1)^n\times\Omega}. \!\! \!\! w(y,\omega) v(y,\omega)\,d{\mathbb{P}}(\omega)\,dy.
\end{aligned} 
$$
The next lemma will be important in the homogenization's process. 
\begin{lemma}\label{SYM1-5}
Let $O\subset\R^n$ be an open set and assume that $\{u_{\ve}(\cdot,\tilde{\omega}){\}}_{\ve>0}$ and $\{\ve \nabla u_{\ve}(\cdot,\tilde{\omega}){\}}_{\ve>0}$ are 
bounded sequences in $L^2(O)$ and in $L^2(O;\R^n)$ respectively for a.e. $\tilde{\omega}\in\Omega$. Then, for a.e. $\tilde{\omega}\in\Omega$, there exists a 
subsequence $\{\ve'\}$(it may depend on $\tilde{\omega}$) and $u_{\tilde{\omega}}\in L^2(O;\mathcal{H})$, such that 
$$
u_{\ve'}(\cdot,\tilde{\omega})\; \xrightharpoonup[\varepsilon \to 0]{2-{\rm s}}\; u_{\tilde{\omega}},
$$
and 
$$
\ve'\nabla u_{\ve'}(\cdot,\tilde{\omega})\; \xrightharpoonup[\varepsilon \to 0]{2-{\rm s}}\;[\nabla_y\Phi]^{-1}\nabla_y u_{\tilde{\omega}}.
$$
\end{lemma}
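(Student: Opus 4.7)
The strategy is to extract two-scale limits for both sequences via Theorem \ref{TwoScale}, identify the limit of $\varepsilon\nabla u_\varepsilon$ through an integration-by-parts relation against stochastically deformed test fields, and then invoke the characterization of stationary weak derivatives supplied by Theorem \ref{648235azwsxqdgfd} to force the chain-rule structure claimed in the statement.

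First, I would fix $\tilde\omega\in\Omega_0$ in the full-measure set on which Theorem \ref{TwoScale} applies to test functions drawn from the countable dense family $\mathbb{D}$. Applying that theorem to the scalar sequence $\{u_\varepsilon(\cdot,\tilde\omega)\}$ and componentwise to $\{\varepsilon\nabla u_\varepsilon(\cdot,\tilde\omega)\}$, followed by a standard diagonal extraction, produces a single subsequence $\{\varepsilon'\}$ (possibly depending on $\tilde\omega$) and stationary limits $u_{\tilde\omega}\in L^2(O\times[0,1)^n\times\Omega)$ and $\xi_{\tilde\omega}\in L^2(O\times[0,1)^n\times\Omega;\R^n)$ satisfying $u_{\varepsilon'}(\cdot,\tilde\omega)\xrightharpoonup[\varepsilon'\to 0]{2-{\rm s}}u_{\tilde\omega}$ and $\varepsilon'\nabla u_{\varepsilon'}(\cdot,\tilde\omega)\xrightharpoonup[\varepsilon'\to 0]{2-{\rm s}}\xi_{\tilde\omega}$. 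The remaining work is to identify $\xi_{\tilde\omega}$ and to upgrade $u_{\tilde\omega}$ to an element of $L^2(O;\mathcal{H})$.

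For $\varphi\in C_c^\infty(O)$ and a smooth stationary vector field $\Psi\in C^1(\R^n;L^2(\Omega;\R^n))$, I would test against $\Psi_{\varepsilon'}(x):=\varphi(x)\,\Psi(\Phi^{-1}(x/\varepsilon',\tilde\omega),\tilde\omega)$. Integration by parts produces an $\varepsilon'$-small term involving $\nabla\varphi\cdot\Psi\circ\Phi^{-1}(\cdot/\varepsilon')$, which vanishes thanks to the $L^2$-boundedness of $u_{\varepsilon'}$, and a main term $-\int u_{\varepsilon'}\,\varphi\,\mathrm{div}_z[\Psi\circ\Phi^{-1}](x/\varepsilon',\tilde\omega)\,dx$. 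The stationarity of $\nabla\Phi$ together with the chain rule shows that $\mathrm{div}_z[\Psi\circ\Phi^{-1}]$ is itself a composition $\tilde\Theta\circ\Phi^{-1}$ for a stationary $\tilde\Theta$, so both sides remain inside the admissible class of Definition \ref{two-scale}. Passing to the two-scale limit yields the variational identity
\begin{equation*}
\int \varphi(x)\,\xi_{\tilde\omega}\cdot(\Psi\circ\Phi^{-1})\,dz\,d\mathbb{P}\,dx = -\int \varphi(x)\,u_{\tilde\omega}\,\mathrm{div}_z[\Psi\circ\Phi^{-1}]\,dz\,d\mathbb{P}\,dx
\end{equation*}
for every admissible $(\varphi,\Psi)$, where the integrations are taken over $O\times\Omega\times\Phi([0,1)^n,\omega)$ and $u_{\tilde\omega},\xi_{\tilde\omega}$ are evaluated at $(x,\Phi^{-1}(z,\omega),\omega)$.

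Finally, localizing in $x$ by arbitrariness of $\varphi$, this identity is exactly condition $(A)$ of Theorem \ref{648235azwsxqdgfd} applied componentwise, for a.e.\ $x\in O$, to the stationary functions $u_{\tilde\omega}(x,\cdot,\cdot)$ and $\xi_{\tilde\omega}(x,\cdot,\cdot)$. That theorem then gives the existence of a stationary weak derivative in $y$ of $u_{\tilde\omega}(x,\cdot,\cdot)$, and the chain rule $\nabla_z[u_{\tilde\omega}(x,\Phi^{-1}(z,\omega),\omega)]=[\nabla_y\Phi(y,\omega)]^{-1}\nabla_y u_{\tilde\omega}(x,y,\omega)|_{y=\Phi^{-1}(z,\omega)}$ simultaneously yields $u_{\tilde\omega}\in L^2(O;\mathcal{H})$ and the identification $\xi_{\tilde\omega}(x,y,\omega)=[\nabla_y\Phi(y,\omega)]^{-1}\nabla_y u_{\tilde\omega}(x,y,\omega)$. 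The main obstacle is precisely this last step: two-scale compactness a priori produces $\xi_{\tilde\omega}$ only as an abstract $L^2$ vector field, and the gradient structure cannot be recovered by the classical periodic argument in the presence of $\Phi$, so it must be forced by Theorem \ref{648235azwsxqdgfd}. A closely related technical point is verifying that $\mathrm{div}_z[\Psi\circ\Phi^{-1}]$ remains an admissible two-scale test, which rests on the algebra and stationarity properties of $\nabla\Phi$ from Definition \ref{GradPhiStationary}(iv).
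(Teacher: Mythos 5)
Your proposal follows essentially the same route as the paper's own proof: extract two-scale limits via Theorem \ref{TwoScale}, integrate by parts against test fields of the form $\varphi(x)\,\Theta(\Phi^{-1}(x/\varepsilon,\tilde\omega),\tilde\omega)$ so that the $\varepsilon$-prefactor kills the $\nabla\varphi$ term and produces the $z$-derivative of the stationary test function, pass to the limit, and then invoke Theorem \ref{648235azwsxqdgfd} to recover the gradient structure and the factor $[\nabla_y\Phi]^{-1}$. The only cosmetic difference is that you package the test function as a stationary vector field and use its divergence, whereas the paper takes a scalar stationary $\Theta$ and argues component by component against $\partial/\partial z_k$; since you then apply Theorem \ref{648235azwsxqdgfd} componentwise anyway, the two are equivalent.
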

\begin{proof}
  Applying the Theorem \ref{TwoScale} for the sequences 
  $${ \{u_\varepsilon(\cdot,\widetilde{\omega})\}_{\varepsilon > 0} }, \quad
{ \{\varepsilon \nabla u_\varepsilon(\cdot,\widetilde{\omega})\}_{\varepsilon > 0} }$$ 
for a.e.  ${ \widetilde{\omega} \in \Omega }$, 
we can find a subsequence $\{ \varepsilon^\prime \}$, and functions 
$${ u_{\widetilde{\omega}} \in  L^2({O} \! \times \! [0,1)^n \! \times \! \Omega) }, \quad
{ V_{\widetilde{\omega}} \in L^2({O} \! \times [0,1)^n \! \times \! \Omega;\R^n)}$$ 
with ${ V_{\widetilde{\omega}} = (v^{(1)}_{\widetilde{\omega}}, \ldots, v^{(n)}_{\widetilde{\omega}}) }$
satisfying for ${ k \in \{1,2, \ldots, n\} }$, 
\begin{equation}\label{9867986876410}
			u_{\varepsilon^\prime}(\cdot,\widetilde{\omega}) \; \xrightharpoonup[\varepsilon^\prime \to 0]{2-{\rm s}} \; u_{\widetilde{\omega}},
		\end{equation}
		and
		\begin{equation}\label{7869876874}
			\varepsilon^\prime \frac{\partial u_{\varepsilon^\prime}}{\partial x_k} \; \xrightharpoonup[\varepsilon^\prime \to 0]{2-{\rm s}} \; v^{(k)}_{\widetilde{\omega}}.
		\end{equation}		
Hence for each ${ k\in \{1,\ldots,n\} }$ and performing an integration by parts we have 
		\begin{multline*}
			\int_{{O}} \varepsilon^\prime \frac{\partial u_{\varepsilon^\prime}}{\partial x_k} (x,\widetilde{\omega}) \, {\varphi(x) \, \Theta{\left( \Phi^{-1}\left( \frac{x}{\varepsilon^\prime}, \widetilde{\omega} \right), \widetilde{\omega} \right)}} dx
			\\
			\hspace{-4cm}=  -\varepsilon^\prime\int_{{O}} u_{\varepsilon^\prime} (x,\widetilde{\omega}) \, {\frac{\partial \varphi}{\partial x_k}(x) \, \Theta {\left( \Phi^{-1}\left( \frac{x}{\varepsilon^\prime}, \widetilde{\omega} \right), \widetilde{\omega} \right)}} dx 
			\\
			\quad \quad -\int_{{O}} u_{\varepsilon^\prime} (x,\widetilde{\omega}) \, {\varphi(x) \, {[\nabla_{\!\! y}\Phi]}^{-1}  {\left( \Phi^{-1}{\left( \frac{x}{\varepsilon^\prime}, \widetilde{\omega} \right)}, \widetilde{\omega} \right)}  \, \nabla_{\!\! y} \Theta{\left( \Phi^{-1}\left( \frac{x}{\varepsilon^\prime}, \widetilde{\omega} \right), \widetilde{\omega} \right)} \cdotp e_k} \, dx,
		\end{multline*}
for every $\varphi\in C^{\infty}_c(O)$ and $\Theta \in C_c^{\infty}\big([0,1)^n; L^{\infty}(\Omega)\big)$ extended in a stationary way to $\R^n$.  Then, using the relations \eqref{9867986876410}-\eqref{7869876874} and a density argument in the space of the test functions, we arrive after letting $\ve'\to 0$

\begin{multline*}
			\int_\Omega \int_{\Phi([0,1)^n, \omega)} v^{(k)}_{\widetilde{\omega}} {\left( x, \Phi^{-1}\left( z,\omega \right),\omega \right)} \, { \Theta{\left( \Phi^{-1}(z,\omega),\omega \right)} } \, d\mathbb{P} \, dz
			\\
			= - \int_\Omega \int_{\Phi([0,1)^n, \omega)} u_{\widetilde{\omega}} \left( x, \Phi^{-1}\left( z,\omega \right),\omega \right) { \frac{\partial }{\partial z_k} {\left( \Theta{\left( \Phi^{-1}(z,\omega),\omega \right)} \right)} } \,  d\mathbb{P} \, dz ,
		\end{multline*}
for a.e. $x \in {O} $ and for any $\Theta \in C_c^{\infty}\big([0,1)^n; L^{\infty}(\Omega)\big)$.

Hence­ applying Theorem \ref{648235azwsxqdgfd}, we obtain 
\begin{equation*}
	\int_{\mathbb{R}^n} v^{(k)}_{\widetilde{\omega}} {\left( x, \Phi^{-1}\left( z,\omega \right),\omega \right)} 
	\, { \varphi(z) } \, dz \, = \, -\int_{\mathbb{R}^n} u_{\widetilde{\omega}} 
	{\left( x, \Phi^{-1}\left( z,\omega \right),\omega \right)} \, { \frac{\partial \varphi}{\partial z_k}(z) } \, dz ,
\end{equation*}
for all ${ \varphi \in C_{\rm c}^\infty(\mathbb{R}^n) }$ and a.e.  ${ \omega \in \Omega }$. This completes the proof of our lemma. 
\end{proof}

	
\subsection{Bloch waves analysis. The WKB method}
\label{877853467yd56rtfe5rtfgeds76ytged}

Here we formally obtain the Bloch spectral cell equation
(stochastic setting).
To this end, we apply the 
asymptotic  Wentzel-Kramers-Brillouin (WKB for short) expansion method, that is, 
we assume that the solution of equation \eqref{jhjkhkjhkj765675233} is given by a
plane wave, see Definition \ref{92347828454trfhfd4rfghjls}.
More precisely, for each $\varepsilon> 0$ let us assume that, the solution $u_\varepsilon(t,x,\omega)$
of the equation \eqref{jhjkhkjhkj765675233} has the 
following asymptotic expansion 
\begin{equation}
\label{ansatz}
   u_{\varepsilon}(t,x,\omega)= e^{2\pi i S_{\varepsilon}(t,x)} \sum_{k=1}^{\infty}\varepsilon^k 
   u_k\Big(t,x,\Phi^{-1}\left(\frac{x}{\varepsilon},\omega\right),\omega\Big),
\end{equation}
where the functions $u_k(t,x,y,\omega)$ are conveniently stationary in $y$, and $S_{\varepsilon}$ 
is a real valued function to be established a posteriori
(not necessarily a polynomial in $\varepsilon$), which take part of the modulated plane wave \eqref{ansatz} 
from $e^{2\pi i S_{\varepsilon}(t,x)}$. 

The spatial derivative of the above ansatz \eqref{ansatz} is  
$$
\begin{aligned}
\nabla u_\varepsilon&(t,x,\omega)= e^{2i\pi S_\varepsilon(t,x)} \big(2i\pi \nabla S_\varepsilon(t,x)\sum_{k=0}^\infty \varepsilon^k\,
u_k \big(t,x,\Phi^{-1}\left(\frac{x}{\varepsilon},\omega\right),\omega\big)
\\[5pt]
&\qquad + \sum_{k=0}^\infty \varepsilon^k\Big\{\left(\partial_x u_k\right) \Big(t,x,\Phi^{-1}\left(\frac{x}{\varepsilon},\omega\right),\omega\Big)
\\[5pt]
&\qquad + \frac{1}{\varepsilon}(\nabla\Phi)^{-1}\left(\Phi^{-1}\left(\frac{x}{\varepsilon},\omega\right),\omega\right)
\left(\partial_y u_k\right)\Big(t,x,\Phi^{-1}\left(\frac{x}{\varepsilon},\omega\right),\omega\Big)\Big\}\Big)
\\[5pt]
&=e^{2i\pi S_\varepsilon(t,x)} \Big(\sum_{k=0}^\infty \varepsilon^k\left(\frac{{\nabla}_z}{\varepsilon} + 2i\pi \nabla S_\varepsilon(t,x)\right)
u_k\Big(t,x,\Phi^{-1}(\frac{x}{\varepsilon},\omega),\omega\Big)
\\[5pt]
&\qquad +\sum_{k=0}^\infty \varepsilon^k\left(\nabla_x u_k   \right)
\Big(t,x,\Phi^{-1}\left(\frac{x}{\varepsilon},\omega\right),\omega\Big)\Big).
\end{aligned}
$$

Now, computing the second derivatives of the expansion~\eqref{ansatz} and writing as a cascade of the power of $\varepsilon$, we have 
\begin{equation}
\label{ansatz2}
\begin{aligned}
e^{-2i\pi S_\varepsilon(t,x)} & {\rm div}{\big( A {( \Phi^{-1}{\left( \frac{x}{\varepsilon},\omega \right)},\omega)} 
\nabla u_\varepsilon(t,x,\omega) \big)}
\\
&=\frac{1}{\varepsilon^2}\Big( {\rm div}_{\! z} + 2i\pi \varepsilon \nabla S_\varepsilon(t,x) \Big)
\Big( A{\left( \Phi^{-1}(\cdot,\omega),\omega \right)} \Big( \nabla_{\!\! z} + 2i\pi \varepsilon \nabla S_\varepsilon(t,x) \Big)
\\
&\qquad\qquad\qquad\qquad u_0 ( t,x,\Phi^{-1}(\cdot,\omega),\omega)\Big){\Bigg\rvert}_{z=x/\varepsilon}
\\
&+ \frac{1}{\varepsilon}\Big( {\rm div}_{\! z} + 2i\pi \varepsilon \nabla S_\varepsilon(t,x) \Big)
\Big( A{\left( \Phi^{-1}(\cdot,\omega),\omega \right)} \Big( \nabla_{\!\! z} + 2i\pi \varepsilon \nabla S_\varepsilon(t,x) \Big)
\\
&\qquad\qquad\qquad\qquad u_1 ( t,x,\Phi^{-1}(\cdot,\omega),\omega)\Big){\Big\rvert}_{z=x/\varepsilon}
+ I_\varepsilon, 
\end{aligned}
\end{equation}
where 
\begin{eqnarray}
&& I_\varepsilon= \sum_{k=0}^\infty \varepsilon^k\Big( {\rm div}_{\! z} + 2i\pi \varepsilon \nabla S_\varepsilon(t,x) \Big)
\Big( A{\left( \Phi^{-1}(\cdot,\omega),\omega \right)} \Big( \nabla_{\!\! z} + 2i\pi \varepsilon \nabla S_\varepsilon(t,x) \Big)\nonumber
\\
&&\qquad\qquad u_{k+2} ( t,x,\Phi^{-1}(\cdot,\omega),\omega)\Big){\Big\rvert}_{z=x/\varepsilon}\nonumber
\\
&&+\frac{1}{\varepsilon}\Big( {\rm div}_{\! z} + 2i\pi \varepsilon \nabla S_\varepsilon(t,x) \Big)
\Big( A{\left( \Phi^{-1}(\cdot,\omega),\omega \right)}\nabla_x u_0 ( t,x,\Phi^{-1}(\cdot,\omega),\omega)\Big){\Big\rvert}_{z=x/\varepsilon}\nonumber
\\
&&+\sum_{k=0}^\infty \varepsilon^k\Big( {\rm div}_{\! z} + 2i\pi \varepsilon \nabla S_\varepsilon(t,x) \Big)
\Big( A{\left( \Phi^{-1}(\cdot,\omega),\omega \right)}\nabla_x u_{k+1} ( t,x,\Phi^{-1}(\cdot,\omega),\omega)\Big){\Big\rvert}_{z=x/\varepsilon}\nonumber
\\
&&+ \frac{1}{\varepsilon}{\rm div}_{\! x}\Big(A{\left( \Phi^{-1}(\cdot,\omega),\omega \right)}\Big( \nabla_{\!\! z} + 2i\pi \varepsilon \nabla S_\varepsilon(t,x) \Big)
u_0 ( t,x,\Phi^{-1}(\cdot,\omega),\omega)\Big){\Big\rvert}_{z=x/\varepsilon}\nonumber
\\
&&+ \sum_{k=0}^\infty \varepsilon^k {\rm div}_{\! x}\Big(A{\left( \Phi^{-1}(\cdot,\omega),\omega \right)}\Big( \nabla_{\!\! z} + 2i\pi \varepsilon \nabla S_\varepsilon(t,x) \Big)
u_{k+1} ( t,x,\Phi^{-1}(\cdot,\omega),\omega)\Big){\Big\rvert}_{z=x/\varepsilon}\nonumber
\\
&&+\sum_{k=0}^\infty \varepsilon^k{\rm div}_{\! x}\Big(A{\left( \Phi^{-1}(\cdot,\omega),\omega \right)}\nabla_{\!\! x} 
u_k{\Big( t,x,\Phi^{-1}(\cdot,\omega),\omega\Big)}\Big){\Big\rvert}_{z=x/\varepsilon}.
\end{eqnarray}

Proceeding in the same way with respect to the temporal derivative, we have
\begin{eqnarray}\label{ansatz3}
&&e^{-2i\pi S_\varepsilon(t,x)}\,{\partial}_t u_\varepsilon\nonumber
\\
&&\qquad\qquad=
\frac{1}{\varepsilon^2} \Big(2i\pi \varepsilon^2 {\partial}_{t} S_\varepsilon(t,x) \Big) u_0\Big( t,x,\Phi^{-1}\left(\frac{x}{\varepsilon},\omega\right),\omega\Big)\nonumber
\\
&&\qquad\qquad+\frac{1}{\varepsilon} \Big(2i\pi \varepsilon^2 {\partial}_{t} S_\varepsilon(t,x) \Big) 
u_1\Big( t,x,\Phi^{-1}\left(\frac{x}{\varepsilon},\omega\right),\omega\Big)\nonumber
\\
&&\qquad\qquad+\Big(2i\pi \varepsilon^2 {\partial}_{t} S_\varepsilon(t,x) \Big) \sum_{k=0}^\infty \varepsilon^k
u_{k+2}\Big( t,x,\Phi^{-1}\left(\frac{x}{\varepsilon},\omega\right),\omega\Big)\nonumber
\\
&&\qquad\qquad\qquad\qquad+\sum_{k=0}^\infty \varepsilon^k {\partial}_tu_k\Big( t,x,\Phi^{-1}\left(\frac{x}{\varepsilon},\omega\right),\omega\Big).
\end{eqnarray}
Thus, if we insert the equations \eqref{ansatz2} and \eqref{ansatz3} in \eqref{jhjkhkjhkj765675233} 
and compute the $\varepsilon^{-2}$ order term, we 
arrive at 
$$
  L^\Phi(\varepsilon \nabla S_\varepsilon(t,x)) u_0 {\big( t,x,\Phi^{-1}(\cdot,\omega),\omega\big)}
  = 2 \pi \big( \varepsilon^2 \partial_t S_\varepsilon(t,x)\big)u_0 {\big( t,x,\Phi^{-1}(\cdot,\omega),\omega\big)},
$$
where for each $\theta \in \R^n$, the linear operator $L^\Phi(\theta)$ is defined by
\begin{equation}
\label{EqEsp}
\begin{aligned}
L^\Phi(\theta)[\cdot]:=& -\big( {\rm div}_{\! z} + 2i\pi \theta \big)
\big(A{( \Phi^{-1}(z,\omega),\omega)} {\big( \nabla_{\!\! z} + 2i\pi \theta \big)} [\cdot] \big)
\\ 
&+V {\big(\Phi^{-1}\left(z,\omega\right),\omega\big)} [\cdot].
\end{aligned}
\end{equation}
Therefore, $2 \pi \Big( \varepsilon^2 \partial_t S_\varepsilon(t,x)\Big)$ is an eigenvalue of $ L^\Phi(\varepsilon \nabla S_\varepsilon(t,x))$.
Consequently, if $\lambda(\theta)$ is any eigenvalue of $L^\Phi(\theta)$ (which is sufficiently regular with respect to $\theta$), then
the following (eikonal) Hamilton-Jacobi equation must be satisfied
$$
   2 \pi \varepsilon^2 \partial_t S_\varepsilon(t,x) - \lambda(\varepsilon \nabla S_\varepsilon(t,x))= 0. 
$$
Thus, if we suppose for $t=0$ (companion to \eqref{ansatz}) the modulated plane wave initial data 
\begin{equation}
\label{ansatzID}
   u_{\varepsilon}(0,x,\omega)= e^{2i\pi \frac{\theta \cdot x}{\varepsilon}} \sum_{k=1}^{\infty}\varepsilon^k 
   u_k\Big(0,x,\Phi^{-1}\left(\frac{x}{\varepsilon},\omega\right),\omega\Big),
\end{equation}
then the unique solution for the above Hamilton-Jacobi equation is, for each parameter $\theta \in \R^n$, 
\begin{equation}
\label{SEP}
   S_\varepsilon(t,x)= \frac{\lambda(\theta) \  t}{2 \pi \varepsilon^2} + \frac{\theta \cdot x}{\varepsilon}.
\end{equation}

To sum up, the above expansion, that is
the solution $u_{\varepsilon}$ of the equation \eqref{jhjkhkjhkj765675233} 
with initial data given respectively by \eqref{ansatz} and \eqref{ansatzID}, 
suggests the following 

\begin{definition}[Bloch or
shifted spectral cell equation] 
\label{DEFBLOCHCELL} Let $\Phi$ be a stochastic deformation. 
For any $\theta \in \R^n$ fixed, the following time independent asymptotic equation 
\begin{equation}
\label{92347828454trfhfd4rfghjls}
\left\{
\begin{array}{l}
L^\Phi(\theta) [\Psi(z,\omega)]= \lambda \ \Psi(z,\omega), \hspace{40pt} \text{in $\R^n \times \Omega$}, 
\\[5pt]
\hspace{32pt} \Psi(z, \omega) = \psi {\left( \Phi^{-1} (z, \omega), \omega \right)}, \quad \text{$\psi$ is a stationary function},
			\end{array}
			\right.
		\end{equation}
is called Bloch's spectral cell equation companion to the Schr\"odinger equation in \eqref{jhjkhkjhkj765675233},
where $L^\Phi(\theta)$ is given by \eqref{EqEsp}.
Moreover, each $\theta \in \R^n$ is called a Bloch frequency, $\lambda(\theta)$ is called a Bloch energy and the corresponded 
$\Psi(\theta)$ is called a Bloch wave. If $\Phi$ is well understood in the context, then $L \equiv L^\Phi$. 
\end{definition}

\medskip
The unknown $(\lambda,\Psi)$ in \eqref{92347828454trfhfd4rfghjls}, which is an eigenvalue-eigenfunction pair, is obtained 
by the associated variational formulation, that is 
\begin{equation}
\label{FORMVARIAC}
\begin{aligned}
&\langle L(\theta)[F], G\rangle
\\
&= \int_\Omega \int_{\Phi([0,1)^n,\omega)} \!\!\!\!\!\!\!\!\! A( \Phi^{-1}(z, \omega), \omega) (\nabla_{\!\! z} + 2i\pi\theta)  F(z,\omega) \cdot
 \overline{{( \nabla_{\!\! z} + 2i\pi\theta)} G(z,\omega)} \, dz \, d\mathbb{P}(\omega) 
 \\[5pt]
 &+ \int_\Omega \int_{\Phi([0,1)^n,\omega)} V{( \Phi^{-1}(z, \omega), \omega)} \ F(z,\omega) \, 
 \overline{G(z,\omega)} \, dz \, d\mathbb{P}(\omega). 
\end{aligned}
\end{equation}
Moreover, it is required that for some $\theta^\ast \in \R^n$, $\lambda(\theta^\ast) \in \mathbb{R}$ 
satisfies 
 \begin{equation}
 \label{conds}
	\left\{ \,
	\begin{split}
		& \lambda(\theta^\ast) \; \text{is a simple eigenvalue of} \; L^\Phi(\theta^\ast), \\
		&\theta^\ast \; \text{is a critical point of} \; \lambda(\cdot), \, \text{that is}, \nabla_{\!\! \theta} \lambda(\theta^\ast) = 0. 
	\end{split}
	\right.
\end{equation}

It is not clear the existence of a pair $(\theta^{\ast},\lambda(\theta^{\ast}))$,
in general stochastic environments, satisfying the two above 
conditions. 
However, there are concrete situations  
in the periodic settings where such conditions take place (see for 
instance \cite{AllairePiatnitski,BarlettiBenAbdallah,BensoussanLionsPapanicolaou}).  
Section \ref{6775765ff0090sds} presents realistic stochastic models, 
whose spectral nature is inherited from the periodic ones and may satisfy \eqref{conds}. 

\subsection{Perturbations of bounded operators} 
\label{0239786gfhgdf}

To finish this preliminar section, we consider
the perturbation theory for bounded operators with 
isolated eigenvalues of finite multiplicity.
The precisely result is stated in the following theorem, 
(see for instance Kato \cite{Kato}, Rellich \cite{Rellich}). 
\begin{theorem}
\label{768746hughjg576}
Let $H$ be a Hilbert space, and a sequence 
of operators $\{A_\alpha\}$,
$A_\alpha \in \mathcal{B}(H)$ for each $\alpha \in \mathbb{N}^n$.
Consider the power series of $n-$complex variables $\boldsymbol{z}= (z_1, \ldots, z_n)$
with coefficients $A_\alpha$, which is absolutely convergent in a neighborhood $ \mathcal{O}$ of 
$\boldsymbol{z}=\boldsymbol{0}$. Define, the holomorphic map $A:  \mathcal{O} \to  \mathcal{B}(H)$, 
$$  
  A(\boldsymbol{z}):= \sum_{\alpha\in \mathbb{N}^n} \boldsymbol{z}^\alpha A_\alpha
$$
and assume that, it is symmetric. If $\lambda$ is an eigenvalue 
of $A_0 \equiv A(\boldsymbol{0})$ with finite multiplicity $h$ (and respective eigenvectors $\psi_i$, $i= 1,\ldots,h$), 
then there exist a neighborhood $U \subset  \mathcal{O}$
of ${ \boldsymbol{0} }$, and holomorphic functions
$$
\begin{aligned}
    \boldsymbol{z} \in U &\, \mapsto \, \lambda_1(\boldsymbol{z}), \lambda_2(\boldsymbol{z}), \ldots, \lambda_h(\boldsymbol{z}) \in \mathbb{R},
    \\[5pt]
    \boldsymbol{z} \in U &\, \mapsto \, \psi_1(\boldsymbol{z}), \psi_2(\boldsymbol{z}), \ldots, \psi_h(\boldsymbol{z}) \in H\setminus \{0\},
\end{aligned}
$$
satisfying for each $\boldsymbol{z} \in U$ and $i \in \{1,\ldots,h\}:$
		\begin{itemize}
			\item[$(i)$] $A(\boldsymbol{z}) \psi_i(\boldsymbol{z}) = \lambda_i(\boldsymbol{z}) \psi_i(\boldsymbol{z})$, 
			\item[$(ii)$] ${ \lambda_i(\boldsymbol{z} = \boldsymbol{0}) = \lambda }$, 
			\item[$(iii)$] ${ {\rm dim} {\{w \in H \; ; \; A(\boldsymbol{z}) w = \lambda_i(\boldsymbol{z}) w \}} \leqslant h }$.
		\end{itemize}
Moreover, if there exists $d> 0$ such that 
$
   \sigma(A_0)\cap (\lambda-d, \lambda+d) = {\left\{ \lambda \right\}},
$
then for each $d^\prime\in(0,d)$ there exists a neighborhood $W \subset U$ of $\boldsymbol{0}$, 
such that
\begin{equation}
\label{FINALPERT}
   \sigma(A(\boldsymbol{z})) \cap (\lambda - d^\prime, \lambda + d^\prime) = {\left\{ \lambda_1(\boldsymbol{z}), \ldots, \lambda_h(\boldsymbol{z}) \right\}}, 
   \quad \text{for all $\boldsymbol{z}\in W$.}
\end{equation} 
\end{theorem}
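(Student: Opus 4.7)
The plan is to use the pseudo-inverse $R$ of $A_0 - \lambda I$ from~\eqref{DEFR} together with the Weierstrass Preparation Theorem~\ref{8747285tdg4f} to reduce the eigenvalue problem to a finite-dimensional polynomial equation plus a contraction in the orthogonal complement of $\ker(A_0-\lambda I)$. First I would write $A(\boldsymbol{z}) = A_0 + B(\boldsymbol{z})$ with $B(\boldsymbol{z}) := \sum_{|\alpha|\ge 1}\boldsymbol{z}^\alpha A_\alpha$ holomorphic and $B(\boldsymbol{0})=0$, split $H = \ker(A_0-\lambda I)\oplus \ker(A_0-\lambda I)^\perp$ via the orthogonal projection $P_0 = \sum_{k=1}^h \psi_k\langle\,\cdot\,,\psi_k\rangle$, and look for a prospective eigenpair in the form
\[
\psi(\boldsymbol{z}) = \sum_{k=1}^h c_k\,\psi_k + \phi(\boldsymbol{z}),\qquad \mu(\boldsymbol{z}) = \lambda + \rho,
\]
with $\phi \in \ker(A_0-\lambda I)^\perp$ and $c = (c_1,\ldots,c_h)\in\mathbb{C}^h$. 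Applying $R$ to $A(\boldsymbol{z})\psi = \mu\psi$ and using $R(A_0-\lambda I)\phi = \phi$ together with $R\psi_k = 0$ gives the fixed-point equation
\[
\phi = -R\,B(\boldsymbol{z})\Bigl(\sum_k c_k\psi_k + \phi\Bigr) + \rho\, R\phi,
\]
which by the smallness of $B(\boldsymbol{z})$ and $\rho$ is a contraction and yields a unique $\phi = \phi(c,\rho,\boldsymbol{z})$, holomorphic near $(c,0,\boldsymbol{0})$ and linear in $c$ for fixed $(\rho,\boldsymbol{z})$.

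Projecting the eigenvalue equation by $P_0$ then produces an $h\times h$ secular system $M(\rho,\boldsymbol{z})\,c = \rho\,c$, whose entries depend holomorphically on $(\rho,\boldsymbol{z})$ with $M(0,\boldsymbol{0})=0$. The admissible $\rho$ are therefore the zeros of
\[
F(\rho,\boldsymbol{z}) := \det\bigl(\rho\,I - M(\rho,\boldsymbol{z})\bigr),
\]
which is holomorphic with $F(\rho,\boldsymbol{0}) = \rho^h$. Theorem~\ref{8747285tdg4f} then factors $F(\rho,\boldsymbol{z}) = W(\rho,\boldsymbol{z})\,E(\rho,\boldsymbol{z})$ with $E$ nowhere vanishing and $W$ a Weierstrass polynomial of degree $h$, so the eigenvalues near $\lambda$ are exactly the $h$ roots (with multiplicity) of $W(\,\cdot\,,\boldsymbol{z})$. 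The symmetry of $A(\boldsymbol{z})$ for real $\boldsymbol{z}$ forces these roots to be real and, after suitable relabelling, to split into holomorphic branches $\lambda_i(\boldsymbol{z})$ with $\lambda_i(\boldsymbol{0}) = \lambda$, giving (ii); holomorphic null-vectors $c_i(\boldsymbol{z}) \in \ker(M(\rho_i(\boldsymbol{z}),\boldsymbol{z})-\rho_i(\boldsymbol{z})I)$ are then picked via Cramer's rule combined with Osgood's Lemma~\ref{9734987389rhd7gf6ty}, and setting $\psi_i(\boldsymbol{z}) := \sum_k c_{i,k}(\boldsymbol{z})\,\psi_k + \phi(c_i(\boldsymbol{z}),\rho_i(\boldsymbol{z}),\boldsymbol{z})$ yields (i) and (iii).

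For the spectral identity~\eqref{FINALPERT} I would use the uniform invertibility of $(\zeta I - A(\boldsymbol{z}))$ along the circle $\Gamma := \{|\zeta-\lambda| = d'\}$ for $\boldsymbol{z}$ in a sufficiently small neighbourhood, which follows from the isolation hypothesis $\sigma(A_0)\cap(\lambda-d,\lambda+d) = \{\lambda\}$ and the holomorphy of the resolvent in $\boldsymbol{z}$: this prevents any extraneous spectrum from entering $(\lambda-d',\lambda+d')$, so the $h$ branches $\lambda_i(\boldsymbol{z})$ must exhaust it. The hard part is the multivariable extraction of single-valued holomorphic branches from the Weierstrass polynomial $W$: for $n\ge 2$ its roots generically define an algebraic variety that may carry non-trivial monodromy, and it is the self-adjoint structure (forcing $W$ to split over $\mathbb{R}$ on the real slice and forbidding complex-conjugate pair crossings) that must be used to rule out ramification and produce genuinely holomorphic $\lambda_i$ rather than a branched covering.
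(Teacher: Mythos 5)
Your overall strategy matches the paper's quite closely: both use the pseudo-inverse $R$ from~\eqref{DEFR}, reduce to a finite-dimensional determinant $F(\varrho,\boldsymbol{z})$ satisfying $F(\varrho,\boldsymbol{0})=\varrho^h$, and apply Theorem~\ref{8747285tdg4f}. Your secular determinant $\det(\rho I - M(\rho,\boldsymbol{z}))$ and the paper's $F=\det[(f_{ij})]$ with $f_{ij}(\varrho,\boldsymbol{z})=\langle(\varrho-B(\boldsymbol{z}))\sum_l[R(\varrho-B(\boldsymbol{z}))]^l\psi_i,\psi_j\rangle$ are the same object in different packaging, and your fixed-point solution $\phi$ reproduces the paper's Neumann-series eigenvector $\psi_k(\boldsymbol{z})=\sum_l[R(\varrho_k-B)]^l\phi_k(\boldsymbol{z})$. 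Where you genuinely diverge is in establishing~\eqref{FINALPERT}: you propose a Riesz-projection contour argument around $\Gamma=\{|\zeta-\lambda|=d'\}$, whereas the paper avoids contour integrals entirely by constructing the auxiliary operator $D(\boldsymbol{z})=A(\boldsymbol{z})-2dP(\boldsymbol{z})$, proving a bijectivity dictionary between $D(\boldsymbol{z})-\xi$ and $A(\boldsymbol{z})-\xi$, showing $\sigma(D(\boldsymbol{0}))$ misses $(\lambda-d,\lambda+d)$, and transferring this to nearby $\boldsymbol{z}$ via a continuity estimate on $(D(\boldsymbol{z})-\lambda)^{-1}$. Both routes presuppose the branch data from the Weierstrass step.

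There is, however, a genuine gap that you flag but do not close, and the remedy you sketch would not succeed. The Weierstrass polynomial $W(\varrho,\boldsymbol{z})$ of degree $h$ does not in general factor into holomorphic linear factors $\prod_i(\varrho-\varrho_i(\boldsymbol{z}))$ when $n\ge 2$, and self-adjointness does not rule out ramification in several variables: Rellich's holomorphic-branch theorem is strictly a one-real-parameter result. Concretely, take $H=\mathbb{C}^2$, $n=2$, and $A(z_1,z_2)=\begin{pmatrix} z_1 & z_2 \\ z_2 & -z_1 \end{pmatrix}$, with $A_{(1,0)}$, $A_{(0,1)}$ both Hermitian, $A_0=0$, $\lambda=0$, $h=2$. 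The eigenvalues $\pm\sqrt{z_1^2+z_2^2}$ fail even to be once-differentiable at $\boldsymbol{0}$, and the Weierstrass polynomial $\varrho^2-(z_1+iz_2)(z_1-iz_2)$ is irreducible in $\mathbb{C}\{z_1,z_2\}[\varrho]$, so no holomorphic $\varrho_k$ exist. The paper's Step~3 asserts this factorization without justification, so you have correctly diagnosed the soft spot; but as written both your proposal and the paper's proof stop at precisely this step, and invoking the self-adjoint structure to "rule out ramification," as you suggest, cannot supply the missing factorization.
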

	
\section{\!\! On Schr\"odinger Equations Homogenization}
\label{HomoSchEqu}

In this section, we shall describe the asymptotic behaviour of the family of solutions 
$\{ u_{\varepsilon}{\}_{\varepsilon>0}}$ of the equation~\eqref{jhjkhkjhkj765675233},
this is the content of Theorem~\ref{876427463tggfdhgdfgkkjjlmk} below. It generalizes the similar result of Allaire, Piatnitski \cite{AllairePiatnitski} where they consider the 
similar problem in the periodic setting.  Our scenario is much different from one considered by them. 
Here, the coefficients of equation~\eqref{jhjkhkjhkj765675233} are random perturbations accomplished by 
stochastic diffeomorphisms of stationary functions.  
Since the two-scale convergence technique is the best tool to deal with 
asymptotic analysis of linear operators, we make use of it in analogous way done in~\cite{AllairePiatnitski}. 
Although, the presence of the stochastic deformation in the coefficients
brings out several complications, which we were able to overcome. 

\medskip
To begin, some important and basic a priori estimates of the solution of the Schr\"odinger 
equation \eqref{jhjkhkjhkj765675233} are needed. Then, we have the following  

\begin{lemma}[Energy Estimates]
\label{63457rf2wertgh}
	Assume that the conditions \eqref{ASSUM1}, \eqref{ASSUM2} hold and let 
${ u_\varepsilon }\in C\big([0,T);H^1(\R^n)\big)$ be the solution of the equation \eqref{jhjkhkjhkj765675233} with initial data 
	$u_{\ve}^0$. Then, for all ${ t\in [0,T] }$ and a.e. ${ \omega \in \Omega }$, the following a priori estimates hold:
	\begin{itemize}
		\item[(i)] $($Energy Conservation$.)$ ${ \displaystyle\int_{\mathbb{R}^n} {\vert u_\varepsilon(t,x,\omega) \vert}^2 dx = \int_{\mathbb{R}^n} {\vert u_{\varepsilon}^0(x,\omega) \vert}^2 dx }$.
		\item[(ii)] $( \varepsilon \nabla-$ Estimate$.)$
		\begin{eqnarray*}
		 \int_{\mathbb{R}^n} |\ve\nabla u_{\varepsilon}(t,x,\omega)|^2\, dx
		\le C\int_{\R^n}\Big\{|\ve\nabla u_{\ve}^0(x,\omega)|^2+|u_{\ve}^0(x,\omega)|^2\Big\} \,dx,
	 	\end{eqnarray*}
where ${ C:=C\big(\Lambda,{\Vert A \Vert}_\infty,{\Vert V \Vert}_\infty,{\Vert U \Vert}_\infty} \big) $ is a positive constant which does not depend on $\varepsilon > 0$.
	        	\end{itemize}
\end{lemma}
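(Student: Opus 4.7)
The plan is to derive both a priori estimates by pairing the Schrödinger equation in \eqref{jhjkhkjhkj765675233} with suitable test functions and integrating by parts in $x$. Throughout, I would work first with smoother initial data (say $u_\varepsilon^0 \in H^2(\mathbb{R}^n)$) so that all manipulations are justified pointwise in $t$, and then conclude by the density argument already used in Proposition~\ref{PROPEUSCHEQ} and the comments following Remark~\ref{REMCOSTCOEFF}. For notational simplicity I write $A(x/\varepsilon,\omega)$, $V(x/\varepsilon,\omega)$, $U(x/\varepsilon,\omega)$ for the composed coefficients, which are real and $x$-measurable.

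For item $(i)$, I would multiply the equation by $\overline{u_\varepsilon}$ and integrate over $\mathbb{R}^n$. Using that $A$ is symmetric real-valued and $V,U$ are real-valued, the spatial integrations by parts
$$-\int_{\mathbb{R}^n} \operatorname{div}(A\nabla u_\varepsilon)\,\overline{u_\varepsilon}\,dx = \int_{\mathbb{R}^n} A\nabla u_\varepsilon\cdot\overline{\nabla u_\varepsilon}\,dx$$
produce real quantities, as do the potential integrals $\int V|u_\varepsilon|^2$ and $\int U|u_\varepsilon|^2$. Taking the imaginary part of the resulting identity therefore gives
$$\tfrac{d}{dt}\int_{\mathbb{R}^n} |u_\varepsilon(t,x,\omega)|^2\,dx = 0,$$
which is $(i)$ after integration in $t$.

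For item $(ii)$, the natural test function is $\partial_t\overline{u_\varepsilon}$. Multiplying, integrating in $x$, and performing an integration by parts on the divergence term, the term $i\int|\partial_t u_\varepsilon|^2$ is purely imaginary, so taking real parts leaves
$$\tfrac{1}{2}\tfrac{d}{dt}\!\Bigl[\int_{\mathbb{R}^n}\! A\nabla u_\varepsilon\cdot\overline{\nabla u_\varepsilon}\,dx + \tfrac{1}{\varepsilon^{2}}\!\int_{\mathbb{R}^n}\! V|u_\varepsilon|^2\,dx + \int_{\mathbb{R}^n}\! U|u_\varepsilon|^2\,dx\Bigr]=0,$$
where again the symmetry/realness of $A$ and the realness of $V,U$ are essential to collapse the real part of e.g.\ $\int A\nabla u_\varepsilon\cdot\partial_t\overline{\nabla u_\varepsilon}$ into a time derivative. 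Multiplying the conserved quantity by $\varepsilon^{2}$ and evaluating at $t$ and $0$ yields
$$\int_{\mathbb{R}^n}\!\varepsilon^2 A\nabla u_\varepsilon(t)\cdot\overline{\nabla u_\varepsilon(t)}\,dx = \int_{\mathbb{R}^n}\!\varepsilon^2 A\nabla u_\varepsilon^0\cdot\overline{\nabla u_\varepsilon^0}\,dx + \int_{\mathbb{R}^n}\! V(|u_\varepsilon^0|^2-|u_\varepsilon(t)|^2)\,dx + \varepsilon^2\!\int_{\mathbb{R}^n}\! U(|u_\varepsilon^0|^2-|u_\varepsilon(t)|^2)\,dx.$$
Now invoking item $(i)$ to control the mass differences, applying the uniform coercivity \eqref{ASSUM2} to bound the left-hand side below by $a_0\int|\varepsilon\nabla u_\varepsilon(t)|^2$, and using \eqref{ASSUM1} to bound the right-hand side above by a constant depending only on $\|A\|_\infty,\|V\|_\infty,\|U\|_\infty,a_0$ times $\int(\varepsilon^2|\nabla u_\varepsilon^0|^2+|u_\varepsilon^0|^2)\,dx$, delivers the stated inequality.

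The only real obstacle is rigor: the regularity $u_\varepsilon\in C([0,T];H^1)\cap C^1((0,T);H^{-1})$ provided by Proposition~\ref{PROPEUSCHEQ} is not quite enough to pair the equation with $\partial_t\overline{u_\varepsilon}$ directly, since $\partial_t u_\varepsilon$ a priori lives only in $H^{-1}$. I would handle this by first assuming $u_\varepsilon^0\in H^2(\mathbb{R}^n)$, which (by a standard bootstrap for the linear Schrödinger equation with bounded measurable coefficients, cf.\ Cazenave--Haraux \cite{CazenaveHaraux}) upgrades the solution to $C([0,T];H^2)\cap C^1([0,T];L^2)$, making the pairing with $\partial_t\overline{u_\varepsilon}$ legitimate in $L^2$. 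The resulting a priori bound is then extended to general $u_\varepsilon^0\in H^1$ by approximation, using the $\varepsilon$-dependent but uniform-in-approximation constants and the continuity of the solution map $u_\varepsilon^0\mapsto u_\varepsilon$ in the relevant topologies. The measurability in $\omega$ is automatic, as no step ever fixes $\omega$ in a singular way.
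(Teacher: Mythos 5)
Your proof is correct and follows essentially the same route as the paper: multiply by $\overline{u_\varepsilon}$ and take the imaginary part for item $(i)$, multiply by $\overline{\partial_t u_\varepsilon}$ and take the real part to obtain the conserved energy $\frac{1}{2}\frac{d}{dt}\int\bigl(\varepsilon^2 A\nabla u_\varepsilon\cdot\overline{\nabla u_\varepsilon}+(V+\varepsilon^2 U)|u_\varepsilon|^2\bigr)\,dx=0$, then use coercivity, the $L^\infty$ bounds, and item $(i)$ for item $(ii)$. The paper's proof is terser and does not spell out the density/regularity justification you give for pairing with $\partial_t\overline{u_\varepsilon}$, but that elaboration is sound and is consistent with the remark in the paper that "these estimates follow by density argument."
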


Its important to remember the followings facts that will be necessary in this section:
\begin{itemize}
\item The initial data of the equation \eqref{jhjkhkjhkj765675233} is assumed to be well-prepared, that is, 
for $(x,\omega) \in \mathbb{R}^n \! \times \! \Omega$, and 
${ \theta^\ast \in \mathbb{R}^n }$
\begin{equation}
\label{WellPreparedness}
	u_\varepsilon^0(x,\omega) = e^{2i\pi \frac{\theta^\ast \cdot x}{\varepsilon}}\psi \left( \Phi^{-1}(x/\ve,\omega),\omega,\theta^\ast \right) 
v^0(x),
\end{equation} 
where {${ v^0 \in C_{\rm c}^\infty(\mathbb{R}^n) }$}, and ${ \psi(\theta^\ast) }$ is an eigenfunction of the cell problem \eqref{92347828454trfhfd4rfghjls}.  
\item Using the Ergodic Theorem, it is easily seen that the sequences 
$${ \{u_{\varepsilon}^0(\cdot,\omega)\}_{\varepsilon >0} } \quad \text{and} \quad 
{ \{\varepsilon \nabla u_{\varepsilon}^0(\cdot,\omega) \}_{\varepsilon > 0} }$$ 
are bounded in ${ L^2(\mathbb{R}^n) }$ and ${ [L^2(\mathbb{R}^n)]^n }$, respectively. 
\end{itemize}

\subsection{The Abstract Theorem.}
\label{ATH}

In the next, we establish an abstract homogenization theorem for Schr\"odinger equations.

\begin{theorem}
\label{876427463tggfdhgdfgkkjjlmk}
Let $\Phi(y,\omega)$ be a stochastic deformation, and $\tau:\Z^n\times \Omega\to \Omega$ an ergodic $n-$dimensional dynamical 
system. 
Assume the conditions \eqref{ASSUM1}, \eqref{ASSUM2}, and 
there exists a Bloch frequence ${ \theta^\ast \! \in \mathbb{R}^n }$ such that \eqref{conds} holds,
with ${ \lambda (\theta^\ast) }$ associated to the eigenfunction 
$\Psi(z,\omega,\theta^\ast)\equiv \psi{\left( \Phi^{-1}(z,\omega),\omega,\theta^\ast \right)}$. Also, assume that the initial data $u_\varepsilon^0$ satisfies
\eqref{WellPreparedness}. If ${ u_\varepsilon }\in C\big([0,T);H^1(\R^n)\big)$  is the solution of~\eqref{jhjkhkjhkj765675233} for each $\varepsilon> 0$ fixed, then the sequence 
${ v_\varepsilon }$ defined by 
		\begin{equation*}
			v_\varepsilon(t,x,\omega) := e^{ -{\left( i \frac{\lambda(\theta^\ast) t}{\varepsilon^2} + 2i\pi \frac{\theta^\ast \! \cdot x}{\varepsilon} \right)} } u_\varepsilon(t,x,\omega), \;\, (t,x) \in \mathbb{R}^{n+1}_T, \; \omega \in \Omega, 
		\end{equation*}
$\Phi_{\omega}-$two-scale converges to the function ${ v(t,x) \, \psi{\left( \Phi^{-1}(z,\omega),\omega, \theta^\ast \right)} }$, and satisfies for a.e.  ${ \omega \in \Omega }$
		\begin{equation*}
			\lim_{\varepsilon \to 0} \iint_{\mathbb{R}^{n+1}_T} \! {\left\vert v_\varepsilon (t,x,\omega) - v(t,x) \, \psi{\left( \Phi^{-1} {\left(\frac{x}{\varepsilon},\omega \right)}, \omega, \theta^\ast \right)} \right\vert}^2 dx \, dt \, = \, 0,
		\end{equation*}
where $v \in C\big([0,T); L^2(\mathbb{R}^n)\big)$  is the unique solution of the homogenized Schr\"odinger equation 
\begin{equation}
\label{HomSchEqu}
\left\{
\begin{aligned}
   & i \displaystyle\frac{\partial v}{\partial t} - {\rm div} {\left( A^\ast \nabla v \right)} + U^\ast v= 0 \, , \;\, \text{in} \;\, \mathbb{R}^{n+1}_T, 
   \\[5pt]
   &	v(0,x) = v^0(x) \, , \;\, x\in \mathbb{R}^n,
\end{aligned}
\right.
\end{equation}
with effective (constant) coefficients: matrix ${ A^\ast = D_\theta^2 \lambda(\theta^\ast) }$, and potential   
		\begin{equation*}
			U^\ast =  c^{-1}_\psi \int_\Omega \int_{\Phi([0,1)^n, \omega)} U{\left( \Phi^{-1} (z,\omega),\omega \right)}  {\left\vert \psi {\left( \Phi^{-1} (z,\omega), \omega, \theta^\ast \right)} \right\vert}^2 dz \, d\mathbb{P}(\omega),
		\end{equation*}
where
		\begin{equation*}
			c_\psi = \int_\Omega \int_{\Phi([0,1)^n, \omega)} {\left\vert \psi {\left( \Phi^{-1} (z,\omega), \omega, \theta^\ast \right)} \right\vert}^2 dz \, d\mathbb{P}(\omega).
		\end{equation*}
\end{theorem}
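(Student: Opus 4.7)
The plan is to adapt the Bloch-wave two-scale method of Allaire--Piatnitski to the stochastically deformed setting, using Theorem \ref{TwoScale}, Lemma \ref{SYM1-5}, and the auxiliary cellular equations of Section \ref{ACE}, together with the analyticity of $(\lambda(\theta),\Psi(\theta))$ near $\theta^*$ (Proposition \ref{2783546tydfh}). First I would substitute $u_\varepsilon = e^{i(\lambda(\theta^*)t/\varepsilon^2+2\pi\theta^*\cdot x/\varepsilon)}v_\varepsilon$ into \eqref{jhjkhkjhkj765675233}; using $\nabla u_\varepsilon = e^{i\Theta_\varepsilon}(\nabla+2i\pi\theta^*/\varepsilon)v_\varepsilon$, this yields a shifted equation
\begin{equation*}
i\partial_t v_\varepsilon + \varepsilon^{-2}\bigl(\lambda(\theta^*) - \mathcal{L}^\varepsilon_\omega\bigr)v_\varepsilon + U^\varepsilon_\omega v_\varepsilon = 0,
\end{equation*}
where $\mathcal{L}^\varepsilon_\omega$ denotes the $\varepsilon$-rescaling of $L^\Phi(\theta^*)$ acting on the fast variable $x/\varepsilon$, and $U^\varepsilon_\omega := U(\Phi^{-1}(x/\varepsilon,\omega),\omega)$. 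Lemma \ref{63457rf2wertgh} plus the well-preparedness \eqref{WellPreparedness} then deliver uniform bounds on $\|v_\varepsilon\|_{L^\infty(0,T;L^2(\R^n))}$ and $\|\varepsilon\nabla v_\varepsilon\|_{L^\infty(0,T;L^2(\R^n))}$, and Lemma \ref{SYM1-5} extracts a subsequence with two-scale limits $V_{\widetilde{\omega}}(t,x,\Phi^{-1}(z,\omega),\omega)$ and $[\nabla_y\Phi]^{-1}\nabla_y V_{\widetilde{\omega}}$. Testing the equation against $\varepsilon^2\varphi(t,x)\Theta(\Phi^{-1}(x/\varepsilon,\omega),\omega)$ with $\Theta$ stationary and passing to the limit via Theorem \ref{TwoScale} forces $V_{\widetilde{\omega}}(t,x,\cdot,\cdot)$ to solve the Bloch cell equation \eqref{92347828454trfhfd4rfghjls} at $\theta=\theta^*$; simplicity of $\lambda(\theta^*)$ then pins down
\begin{equation*}
V_{\widetilde{\omega}}(t,x,z,\omega) = v(t,x)\,\psi(\Phi^{-1}(z,\omega),\omega,\theta^*)
\end{equation*}
for some macroscopic $v \in L^2((0,T)\times\R^n)$.

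To extract the PDE satisfied by $v$, I would use the Bloch-corrector test function
\begin{equation*}
\Xi_\varepsilon(t,x,\omega) := \varphi(t,x)\,\overline{\Psi_\varepsilon(\theta^*)} + \varepsilon\sum_{k=1}^n \partial_{x_k}\varphi(t,x)\,\overline{\Lambda_{k,\varepsilon}(\theta^*)},\qquad \varphi \in C_c^\infty((0,T)\times\R^n),
\end{equation*}
with $\Lambda_k = (2i\pi)^{-1}\partial_{\theta_k}\Psi$. Multiplying the equation for $v_\varepsilon$ by $\Xi_\varepsilon$, integrating by parts via Theorem \ref{648235azwsxqdgfd}, and substituting the rescaled cellular and auxiliary cellular equations \eqref{4576gdcrfvjc46we}--\eqref{nbvnbvxzchgfs54}, the $O(\varepsilon^{-2})$ contributions cancel by \eqref{92347828454trfhfd4rfghjls}; the $O(\varepsilon^{-1})$ contributions cancel thanks to the f.a.c.\ equation \eqref{8654873526rtgdrfdrfdrfrd4} combined with the criticality hypothesis $\nabla_\theta\lambda(\theta^*)=0$; and the remaining $O(1)$ terms, after invoking the identity \eqref{hkjlhjklhljkhuytyiufsd4} (which is itself a consequence of the s.a.c.\ equation \eqref{876786uydsytfgbnvbvbbv}) and passing to the limit via Theorem \ref{TwoScale} and the Ergodic Theorem (Theorem \ref{Birkhoff}, Lemma \ref{phi2}), identify the effective matrix $A^* = D_\theta^2\lambda(\theta^*)$ and the effective potential $U^*$ as stated, producing \eqref{HomSchEqu}. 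The initial datum $v(0,\cdot)=v^0$ is transferred directly from \eqref{WellPreparedness}.

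The strong $L^2$ statement then follows by expanding $\|v_\varepsilon - v\,\psi(\Phi^{-1}(\cdot/\varepsilon,\omega),\omega,\theta^*)\|_{L^2(\R^{n+1}_T)}^2$, computing the cross-term from the identified two-scale limit, and computing $\|v_\varepsilon\|_{L^2}^2$ via mass conservation together with the Ergodic Theorem applied to $|\psi|^2$; uniqueness of the solution of \eqref{HomSchEqu} (Proposition \ref{PROPEUSCHEQ}) upgrades subsequential convergence to convergence of the whole family. The main obstacle lies in the $O(\varepsilon^{-1})$ singular cross-terms generated by the interaction of the Bloch phase with the fast-scale gradients of $\Psi_\varepsilon$ and the correctors $\Lambda_{k,\varepsilon}$, entangled with the stochastic deformation $\Phi$ inside every coefficient: their exact cancellation requires the auxiliary cellular equations of Section \ref{ACE} together with the criticality $\nabla_\theta\lambda(\theta^*)=0$, while passage to the limit in them demands the full $\Phi_\omega$-two-scale machinery of Theorem \ref{TwoScale}, whose construction itself rests on the compactification of Theorem \ref{Compacification}.
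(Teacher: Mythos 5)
Your proposal follows essentially the same route as the paper: two-scale compactness via Lemma \ref{SYM1-5}, identification of the two-scale limit as $v\,\psi(\cdot,\theta^*)$ by testing against $\varepsilon^2\varphi\Theta$ and invoking simplicity of $\lambda(\theta^*)$, then the Bloch-corrector test function $\varphi\,\overline{\Psi_\varepsilon}+\varepsilon\sum_k\partial_{x_k}\varphi\,\overline{\Lambda_{k,\varepsilon}}$, cancellation of singular terms via the cell and first auxiliary cell equations \eqref{nbvnbvxzchgfs54}--\eqref{4576gdcrfvjc46we} together with criticality $\nabla_\theta\lambda(\theta^*)=0$, identification of $A^*$ as the Hessian via \eqref{hkjlhjklhljkhuytyiufsd4}, and the corrector estimate from energy conservation plus the ergodic theorem. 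The only minor slip is the citation of Proposition \ref{PROPEUSCHEQ} for uniqueness of the homogenized problem — since $A^*$ need not be coercive and $v^0\in L^2$, the relevant well-posedness reference is Remark \ref{REMCOSTCOEFF}.
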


\begin{proof}
In order to better understand the main difficulties brought by the presence of the stochastic deformation $\Phi$, we split our proof in five steps. 

\medskip
1.({\it\bf A priori estimates and $\Phi_{\omega}-$two-scale convergence.}) 
First, we define 
\begin{equation}
\label{jghkd65454ads3e}
	    	v_\varepsilon (t,x,\widetilde{\omega}) := e^{-{\left( i \frac{\lambda(\theta^\ast) t}{\varepsilon^2} + 2i\pi \frac{\theta^\ast \! \cdot x}{\varepsilon} \right)} } 
		u_\varepsilon(t,x,\widetilde{\omega}), \;\, (t,x,\widetilde{\omega}) \in \mathbb{R}^{n+1}_T \! \times \! \Omega.
\end{equation}
Then, computing the first derivatives with respect to the variable $x$, we get
	  \begin{equation}
	  \label{974967uhghjzpzas}
	    	\varepsilon \nabla u_\varepsilon (t,x,\widetilde{\omega})\, e^{-{\left( i \frac{\lambda(\theta^\ast) t}{\varepsilon^2} 
		+ 2i\pi \frac{\theta^\ast \! \cdot x}{\varepsilon} \right)} } = (\varepsilon \nabla + 2i\pi \theta^\ast) v_\varepsilon (t,x,\widetilde{\omega}).
	    \end{equation}
Applying Lemma~\ref{63457rf2wertgh} yields:
		\begin{itemize}
			\item ${ \displaystyle\int_{\mathbb{R}^n} {\vert v_\varepsilon(t,x,\widetilde{\omega}) \vert}^2 dx = \int_{\mathbb{R}^n} 
			{\vert u_\varepsilon^0(x,\widetilde{\omega}) \vert}^2 dx },$
			\item ${ \displaystyle\int_{\mathbb{R}^n} {\vert \varepsilon \nabla v_\varepsilon(t,x,\widetilde{\omega}) \vert}^2 dx \leq \widetilde{C} {\displaystyle\int_{\mathbb{R}^n} 
\Big( {\vert \varepsilon \nabla u_\varepsilon^0(x,\widetilde{\omega}) \vert}^2 +{\vert u_\varepsilon^0(x,\widetilde{\omega}) 
\vert}^2 \Big) dx} }$
		\end{itemize}
for all ${ t\in[0,T) }$ and a.e. $\widetilde{\omega} \in\Omega$, where the constant ${ \widetilde{C} }$ depends on
$\| A{\|}_{\infty}$, $\|V{\|}_{\infty}$, $\|U{\|}_{\infty}$ and $\theta^\ast$.  
Then, from the uniform boundedness of the sequences 
${ \{u_{\varepsilon}^0(\cdot,\widetilde{\omega})\}_{\varepsilon >0} }$ and ${ \{\varepsilon \nabla u_{\varepsilon}^0(\cdot,\widetilde{\omega}) \}_{\varepsilon > 0} }$,  we deduce that the sequences 
$${ {\{ v_{\varepsilon}(\cdot,\cdot\cdot,\widetilde{\omega}) \}}_{\varepsilon > 0} } \quad \text{and} \quad { {\{ \varepsilon\nabla v_\varepsilon(\cdot, \cdot\cdot, \widetilde{\omega}) \}}_{\varepsilon > 0} }$$
are bounded, respectively, in ${ L^2(\mathbb{R}^{n+1}_T) }$ and ${ {[ L^2(\mathbb{R}^{n+1}_T)]}^n }$ for a.e. ${ \widetilde{\omega} \in \Omega }$. Therefore, applying Lemma 
\ref{SYM1-5}, there exists a subsequence ${ \{\varepsilon^\prime\} }$(which may dependent on $\tilde{\omega}$), and a stationary function 
${ v^\ast_{\widetilde{\omega}} \in L^2(\mathbb{R}^{n+1}_T, \mathcal{H}) }$, for a.e. ${ \widetilde{\omega} \in \Omega }$, such that  
		\begin{equation*}
			v_{\varepsilon^\prime}(t,x,\widetilde{\omega}) \; \xrightharpoonup[\varepsilon^\prime \to 0]{2-{\rm s}}\; v^\ast_{\widetilde{\omega}} {\left( t,x,\Phi^{-1}(z,\omega),\omega \right)},
		\end{equation*}
		and
		\begin{equation*}
			\varepsilon^\prime \frac{\partial v_{\varepsilon^\prime}}{\partial x_k}(t,x,\widetilde{\omega}) \; \xrightharpoonup[\varepsilon^\prime \to 0]{2-{\rm s}} \; 
			\frac{\partial }{\partial z_k} {\big( v^\ast_{\widetilde{\omega}} {\left(t,x,\Phi^{-1}{(z,\omega)},\omega \right)} \big)},                
		\end{equation*}
which means that, for ${ k\in \{1,\ldots,n\} }$, we have 
\begin{equation}
\label{jhjkhjfdasdfghyui}
\begin{aligned}
      \lim_{\varepsilon^\prime \to 0}\iint_{\mathbb{R}^{n+1}_T} & v_{\varepsilon^\prime} \left(t, x,\widetilde{\omega} \right) \, 
			\overline{ \varphi(t,x) \,\Theta\left(\Phi^{-1}{\left(\frac{x}{\varepsilon^\prime},\widetilde{\omega} \right)},\widetilde{\omega} \right) } \, dx \, dt
\\[5pt]
& = c_{\Phi}^{-1} \!\! \displaystyle\iint_{\mathbb{R}^{n+1}_T} \! \int_\Omega \int_{\Phi([0,1)^n, \omega)} \!\!\!\!\!   v^\ast_{\widetilde{\omega}} 
			{\left(t,x,\Phi^{-1}{(z,\omega)},\omega \right)} \,
\\[5pt]			
			 & \hspace{90pt} \times \; \overline{ \varphi(t,x) \,\Theta\left(\Phi^{-1}(z,\omega),\omega \right)}   \, dz \, d\mathbb{P} \, dx \, dt
\end{aligned}
\end{equation}
and 
\begin{equation}
\label{uygkgcxzcxzsw}
\begin{aligned}
               \lim_{\varepsilon^\prime \to 0} \iint_{\mathbb{R}^{n+1}_T} & \varepsilon^\prime \frac{\partial v_{\varepsilon^\prime}}{\partial x_k} \left(t, x, \widetilde{\omega} \right) \, 
               \overline{ \varphi (t,x)\,\Theta\left(\Phi^{-1}{\left(\frac{x}{\varepsilon^\prime},\widetilde{\omega} \right)},\widetilde{\omega} \right) } \, dx \, dt
\\[5pt]
&= c_{\Phi}^{-1} \!\! \displaystyle\iint_{\mathbb{R}^{n+1}_T} \! \int_\Omega \int_{\Phi([0,1)^n, \omega)} \frac{\partial }{\partial z_k} {\left( v^\ast_{\widetilde{\omega}} {\left(t,x,\Phi^{-1}{(z,\omega)},\omega \right)} \right)} \, 
\\[5pt]
& \hspace{90pt} \times \, \overline{ \varphi (t,x)\,\Theta\left(\Phi^{-1}(z,\omega),\omega \right)}  \, dz \, d\mathbb{P} \, dx \, dt,
\end{aligned}
\end{equation}
for all functions $\varphi \in C^\infty_{\rm c}((-\infty, T)  \times  \mathbb{R}^n)$ and $\Theta \in L^{2}_{\loc}\left(\R^n\times\Omega\right)$ stationary. Moreover, the sequence 
${ {\{ v_\varepsilon^0(\cdot, \widetilde{\omega}) \}}_{\varepsilon > 0} }$ defined by, 
		\begin{equation}\label{5t345rte54ew3e2wswqqq1qdecv}
			v_\varepsilon^0(x,\omega):=\psi{\left( \Phi^{-1}{\left( \frac{x}{\varepsilon},\omega \right)},\omega,\theta^\ast \right)} v^0(x), \;\; (x,\omega) \in \mathbb{R}^n \! \times \! \Omega,
		\end{equation}
satisfies
		\begin{equation}\label{766765t6y4rf5tzxcvbsgfhry}
			v_{\varepsilon}^0(\cdot,\widetilde{\omega}) \; \xrightharpoonup[\varepsilon \to 0]{2-{\rm s}}\; v^0(x) \, \psi{\left( \Phi^{-1}(z,\omega),\omega,\theta^\ast \right)},
		\end{equation}
for each stationary function ${ \psi(\theta^\ast) }$.
		
\bigskip
2.({\it\bf The Split Process.}) We consider the following 

\medskip
 \underline {Claim:} 
There exists ${ v_{\widetilde{\omega}} \in L^2(\mathbb{R}^{n+1}_T) }$, such that 
$$
\begin{aligned}
   v^\ast_{\widetilde{\omega}} {\left( t,x,\Phi^{-1}(z,\omega),\omega \right)}&= v_{\widetilde{\omega}}(t,x) \, \psi {\left(\Phi^{-1}(z,\omega),\omega, \theta^\ast \right)} 
\\[5pt]
   &\equiv v_{\widetilde{\omega}} (t,x) \, \Psi(z,\omega, \theta^\ast).
\end{aligned}       
$$
  
Proof of Claim: First, for any $\widetilde{\omega} \in \Omega$ fixed, we take the function 
\begin{equation}
\label{7676567543409hj}
			Z_\varepsilon (t,x,\widetilde{\omega}) = \varepsilon^2 e^{i \frac{\lambda(\theta^\ast) t}{\varepsilon^2} + 2i\pi \frac{\theta^\ast \! \cdot x}{\varepsilon}} \varphi (t, x) \, 
			\Theta {\big( \Phi^{-1}\big( \frac{x}{\varepsilon}, \widetilde{\omega} \big), \widetilde{\omega} \big)}
\end{equation}
as a test function in the associated variational formulation of the equation \eqref{jhjkhkjhkj765675233}, where ${ \varphi \in C^\infty_{\rm c}((-\infty, T) \! \times \! \mathbb{R}^n) }$ 
and $ \Theta\in L^{\infty}\left(\R^n\times\Omega\right)$ stationary, with $\Theta(\cdot,\omega)$ smooth. Therefore, we obtain 
$$
\begin{aligned} 
			&- i \iint_{\mathbb{R}^{n+1}_T} u_\varepsilon(t,x,\widetilde{\omega}) \, \frac{\partial \overline{Z_\varepsilon}}{\partial t} (t,x,\widetilde{\omega}) \, dx \, dt
			+ i \int_{\mathbb{R}^n} u_\varepsilon^0(x,\widetilde{\omega}) \, \overline{Z_\varepsilon}(0,x,\widetilde{\omega}) \, dx
\\[5pt]
			&+ \iint_{\mathbb{R}^{n+1}_T} A {\left(\Phi^{-1}\left( \frac{x}{\varepsilon}, \widetilde{\omega} \right),\widetilde{\omega} \right)} \nabla u_\varepsilon(t,x,\widetilde{\omega}) \cdot \nabla \overline{Z_\varepsilon}(t,x,\widetilde{\omega}) \, dx \, dt
\\[5pt]
			&+ \frac{1}{\varepsilon^2} \iint_{\mathbb{R}^{n+1}_T} V {\left(\Phi^{-1}\left( \frac{x}{\varepsilon}, \widetilde{\omega} \right),\widetilde{\omega} \right)} u_\varepsilon(t,x,\widetilde{\omega}) \, \overline{Z_\varepsilon}(t,x,\widetilde{\omega}) \, dx \, dt
\\[5pt]
			&+  \iint_{\mathbb{R}^{n+1}_T} U {\left( \Phi^{-1}\left( \frac{x}{\varepsilon}, \widetilde{\omega} \right),\widetilde{\omega} \right)} u_\varepsilon(t,x,\widetilde{\omega})
%
			\, \overline{Z_\varepsilon}(t,x,\widetilde{\omega}) \, dx \, dt= 0,
\end{aligned}
$$
and since 		
$$
\begin{aligned}
			\frac{\partial Z_\varepsilon }{\partial t} (t,x,\widetilde{\omega})&= i \lambda(\theta^\ast) \, e^{i \frac{\lambda(\theta^\ast) t}{\varepsilon^2} 
			+ 2i\pi \frac{\theta^\ast \! \cdot x}{\varepsilon}} \, \varphi (t, x) \, 
			\Theta {( \Phi^{-1}( \frac{x}{\varepsilon}, \widetilde{\omega}), \widetilde{\omega})} + \mathrm{O}(\varepsilon^2), 
\\[5pt]
			\nabla Z_\varepsilon (t,x,\widetilde{\omega})&= \varepsilon \, e^{i \frac{\lambda(\theta^\ast) t}{\varepsilon^2} + 2i\pi \frac{\theta^\ast \! \cdot x}{\varepsilon}} \, (\varepsilon \nabla + 2i\pi \theta^\ast) 
			\big( \varphi(t,x) \, \Theta{( \Phi^{-1}{(\frac{x}{\varepsilon},\widetilde{\omega})},\widetilde{\omega})}\big), 
\end{aligned}
$$
it follows that
$$
\begin{aligned}
   &-  \lambda(\theta^\ast) \displaystyle\iint_{\mathbb{R}^{n+1}_T} v_\varepsilon(t,x,\widetilde{\omega}) \,  
   \overline{ \varphi (t, x) \, \Theta {( \Phi^{-1}\big( \frac{x}{\varepsilon}, \widetilde{\omega} \big), \widetilde{\omega})} } \, dx \, dt 
\\[5pt]
  & + \iint_{\mathbb{R}^{n+1}_T} A {( \Phi^{-1}{\big(\frac{x}{\varepsilon},\widetilde{\omega} \big)},\widetilde{\omega})} \, {(\varepsilon \nabla + 2i\pi \theta^\ast)} v_\varepsilon(t,x,\widetilde{\omega})
\\[5pt]
		& \hspace{60pt} \cdot   \overline{ {(\varepsilon \nabla + 2i\pi \theta^\ast)} {\left( \varphi (t,x) \, \Theta {\left(\Phi^{-1}{\left(\frac{x}{\varepsilon},\widetilde{\omega} \right)},\widetilde{\omega} \right)} \right)} } \, dx \, dt 
\\[5pt]
   &+ \iint_{\mathbb{R}^{n+1}_T} V {\left( \Phi^{-1}{\left(\frac{x}{\varepsilon},\widetilde{\omega} \right)},\widetilde{\omega} \right)} \, v_\varepsilon(t,x,\widetilde{\omega})  \, \overline{ \varphi (t,x) \, \Theta {\left(\Phi^{-1}{\left(\frac{x}{\varepsilon},\widetilde{\omega} \right)},\widetilde{\omega} \right)} } \, dx \, dt= \mathrm{O}(\varepsilon^2),
\end{aligned}
$$
where we have used \eqref{jghkd65454ads3e}, \eqref{974967uhghjzpzas}, \eqref{5t345rte54ew3e2wswqqq1qdecv}, and \eqref{7676567543409hj}.
Although, it is more convenient to rewrite as 
\begin{equation}
\label{56tugryfgrffdd}
\begin{aligned}
 &- \lambda(\theta^\ast) \displaystyle\iint_{\mathbb{R}^{n+1}_T} v_\varepsilon(t,x,\widetilde{\omega}) \,  \overline{ \varphi (t,x) \, \Theta {( \Phi^{-1}\big( \frac{x}{\varepsilon}, \widetilde{\omega}\big), \widetilde{\omega})} } \, dx \, dt
 \\[5pt]
&+ \iint_{\mathbb{R}^{n+1}_T} {(\varepsilon \nabla + 2i\pi \theta^\ast)} v_\varepsilon(t,x,\widetilde{\omega}) 
\\[5pt]
& \hspace{20pt} \cdot \overline{ A {( \Phi^{-1}{\big(\frac{x}{\varepsilon},\widetilde{\omega} \big)},\widetilde{\omega})} \, 
{(\varepsilon \nabla + 2i\pi \theta^\ast)} {\big( \varphi(t,x) \, \Theta{( \Phi^{-1}{\big(\frac{x}{\varepsilon},\widetilde{\omega} \big)},\widetilde{\omega})} \big)} } \, dx \, dt
\\[5pt]
&+ \iint_{\mathbb{R}^{n+1}_T} v_\varepsilon(t,x,\widetilde{\omega}) \, \overline{ \varphi (t,x) \, V {( \Phi^{-1}{\big(\frac{x}{\varepsilon},\widetilde{\omega} \big)},\widetilde{\omega})}
 \, \Theta{( \Phi^{-1}{\big(\frac{x}{\varepsilon},\widetilde{\omega} \big)},\widetilde{\omega})} } \, dx \, dt= \mathrm{O}(\varepsilon^2).
\end{aligned}
\end{equation}
Now, making $\ve={ \varepsilon^\prime}$, letting $\ve'\to 0 $ and using the Definition \ref{two-scale}, we have for a.e. ${ \widetilde{\omega} \in \Omega }$,
for all ${ \varphi \in C^\infty_{\rm c}((-\infty, T) \! \times \! \mathbb{R}^n) }$, 
$\Theta \in L^{\infty}\left(\R^n\times\Omega\right)$ stationary and $\Theta(\cdot,\omega)$ smooth, 
%
\begin{equation*}
\begin{aligned}
&- \lambda(\theta^\ast) \, c_\Phi^{-1} \!\!\! \displaystyle\iint_{\mathbb{R}^{n+1}_T} \! \int_\Omega \int_{\Phi([0,1)^n, \omega)} v^\ast_{\widetilde{\omega}} {\left( t,x,\Phi^{-1}(z,\omega),\omega \right)} 
\\[7pt]
& \hspace{90pt} \times \overline{ \varphi (t,x) \, \Theta {\left( \Phi^{-1}(z,\omega),\omega \right)} } \, dz \, d\mathbb{P}(\omega) \, dx \, dt
\\[7pt]
&+ c_\Phi^{-1} \!\!\! \displaystyle\iint_{\mathbb{R}^{n+1}_T} \! \int_\Omega \int_{\Phi([0,1)^n, \omega)} {(\nabla_{\!\! z} + 2i\pi \theta^\ast)} {\left( v^\ast_{\widetilde{\omega}} {\left( t,x,\Phi^{-1}(z,\omega),\omega \right)} \right)}
\\[7pt]
& \cdot \overline{ \varphi (t,x) \, A {\left( \Phi^{-1}(z,\omega),\omega \right)}  {\left[ {(\nabla_{\!\! z} + 2i\pi \theta^\ast)} {\left(  \Theta {\left( \Phi^{-1}(z,\omega),\omega \right)} \right)} \right]} } \, dz \, d\mathbb{P}(\omega) \, dx \, dt 
\\[7pt]
&+ c_\Phi^{-1} \!\!\! \displaystyle\iint_{\mathbb{R}^{n+1}_T} \! \int_\Omega \int_{\Phi([0,1)^n, \omega)} v^\ast_{\widetilde{\omega}} {\left( t,x,\Phi^{-1}(z,\omega),\omega \right)} \, 
\\[7pt]
&\hspace{60pt} \times  \overline{ \varphi (t,x) \, V {\left( \Phi^{-1}(z,\omega),\omega \right)} \, \Theta {\left( \Phi^{-1}(z,\omega),\omega \right)} } \, dz \, d\mathbb{P}(\omega) \, dx \, dt = 0.
\end{aligned}
\end{equation*}
Therefore, due to an argument 
of density in the test functions
(thanks to the topological structure of $\Omega$), 
we can conclude that 
\begin{equation*}
\begin{aligned}
&- \lambda(\theta^\ast) \, \displaystyle\int_\Omega \int_{\Phi([0,1)^n, \omega)} v^\ast_{\widetilde{\omega}} {\left( t,x,\Phi^{-1}(z,\omega),\omega \right)} \,  \overline{ \Theta {\left( \Phi^{-1}(z,\omega),\omega \right)} } \, dz \, d\mathbb{P}(\omega) 
\\[5pt]
&+ \displaystyle\int_\Omega \int_{\Phi([0,1)^n, \omega)} A {\left( \Phi^{-1}(z,\omega),\omega \right)} {(\nabla_{\!\! z} + 2i\pi \theta^\ast)} {\left( v^\ast_{\widetilde{\omega}} {\left( t,x,\Phi^{-1}(z,\omega),\omega \right)} \right)}
\\[5pt]
&\hspace{60pt} \cdot \overline{ {(\nabla_{\!\! z} + 2i\pi \theta^\ast)} {\left( \Theta {\left( \Phi^{-1}(z,\omega),\omega \right)} \right)} } \, dz \, d\mathbb{P}(\omega)
\\[5pt]
&+ \int_\Omega \int_{\Phi([0,1)^n, \omega)} \!\!\! V {( \Phi^{-1}(z,\omega),\omega)} \,  v^\ast_{\widetilde{\omega}} 
{( t,x,\Phi^{-1}(z,\omega),\omega)} \, 
\\[5pt]
&\hspace{120pt} \times  \overline{ \Theta {( \Phi^{-1}(z,\omega),\omega)} } \, dz \, d\mathbb{P}(\omega)= 0,
\end{aligned}
\end{equation*}
for a.e. ${ (t,x) \in \mathbb{R}^{n+1}_T }$ and for all ${ \Theta}$ as above.  Thus, the simplicity of the eigenvalue $\lambda(\theta^\ast)$ 
assures us that for a.e.  $\mathbb{R}^{n+1}_T $, the function 
$${ (z,\omega) \mapsto v^\ast_{\widetilde{\omega}}{\left( t,x,\Phi^{-1}(z,\omega),\omega \right)} }$$ (which belongs to the space ${ \mathcal{H} }$) is parallel to the 
function ${ \Psi(\theta^\ast) }$, i.e., we can find ${ v_{\widetilde{\omega}}(t,x) \in \mathbb{C} }$, such that 
		\begin{eqnarray*}
v^\ast_{\widetilde{\omega}} {\left( t,x,\Phi^{-1}(z,\omega),\omega \right)} &=& v_{\widetilde{\omega}}(t,x) \, \Psi(z,\omega, \theta^\ast)\\
& \equiv & v_{\widetilde{\omega}}(t,x) \, \psi {\left(\Phi^{-1}(z,\omega),\omega, \theta^\ast \right)}.
		\end{eqnarray*}
		
\medskip
Finally, since ${ v^\ast_{\widetilde{\omega}} \in L^2 (\mathbb{R}^{n+1}_T; \mathcal{H}) }$, we conclude that  ${ v_{\widetilde{\omega}}\in L^2(\mathbb{R}^{n+1}_T) }$, which 
completes the proof of our claim.

\medskip
3.({\it\bf Homogenization Process.}) Let  ${ \Lambda_k (\theta^\ast)}$, for any $k\in \{1,\ldots,n\}$, be the function defined by 
		\begin{equation*}
			\Lambda_k(z,\omega,\theta^\ast)=\frac{1}{2i\pi}\frac{\partial \Psi}{\partial \theta_k}(z,\omega,\theta^\ast)=\frac{1}{2i\pi}\frac{\partial \psi}{\partial \theta_k}
			{\left( \Phi^{-1}(z,\omega),\omega,\theta^\ast \right)}, \; (z,\omega) \in \mathbb{R}^n \! \times \! \Omega,
		\end{equation*}
where the function $\Psi(z,\omega,\theta^\ast)=\psi{\left( \Phi^{-1}(z,\omega),\omega,\theta^\ast \right)}$ 
is the eigenfunction of the spectral cell problem~\eqref{92347828454trfhfd4rfghjls}. 
Then, we consider the following test function 
\begin{equation*}
			Z_\varepsilon(t,x,\widetilde{\omega}) = e^{i \frac{\lambda(\theta^\ast) t}{\varepsilon^2} + 2i\pi \frac{\theta^\ast \! \cdot x}{\varepsilon}} {\big( \varphi(t,x) \, 
			\Psi_\varepsilon(x,\widetilde{\omega},\theta^\ast)  +\varepsilon  \sum_{k=1}^n 
			\frac{\partial \varphi}{\partial x_k}(t,x) \, \Lambda_{k,\varepsilon} (x,\widetilde{\omega},\theta^\ast) \big)},
\end{equation*}
where ${ \varphi \in C^\infty_{\rm c}((-\infty, T) \! \times \! \mathbb{R}^n) }$ and 
		\begin{equation*}
			\Psi_\varepsilon(x,\widetilde{\omega}, \theta^\ast) = \Psi{\big( \frac{x}{\varepsilon},\widetilde{\omega},\theta^\ast\big)},
			\quad \;\; \Lambda_{k,\varepsilon}(x,\widetilde{\omega}, \theta^\ast) = \Lambda_k{\big( \frac{x}{\varepsilon},\widetilde{\omega},\theta^\ast\big)}.
		\end{equation*}    
Using the function ${ Z_\varepsilon }$ as test function in the variational formulation of the equation \eqref{jhjkhkjhkj765675233}, 
we obtain		
\begin{equation}
\label{676745459023v}
\begin{aligned}
& \big[ i \displaystyle\int_{\mathbb{R}^n} u_\varepsilon^0(x,\widetilde{\omega}) \, \overline{Z_\varepsilon}(0,x,\widetilde{\omega}) \, dx  
 - i \displaystyle\iint_{\mathbb{R}^{n+1}_T} u_\varepsilon(t,x,\widetilde{\omega}) \, \frac{\partial \overline{Z_\varepsilon}}{\partial t} (t,x,\widetilde{\omega}) \, dx \, dt \big]
\\[5pt]
&+ \big[ \displaystyle\iint_{\mathbb{R}^{n+1}_T} A {\left(\Phi^{-1}{\left( \frac{x}{\varepsilon},\widetilde{\omega} \right)},\widetilde{\omega} \right)} \, 
			\nabla u_\varepsilon (t,x,\widetilde{\omega}) \cdot \nabla \overline{Z_\varepsilon}(t,x,\widetilde{\omega}) \, dx \, dt \big]
\\[5pt]
&			+ \big[ \displaystyle\frac{1}{\varepsilon^2} \iint_{\mathbb{R}^{n+1}_T} V {\left(\Phi^{-1}{\left( \frac{x}{\varepsilon},\widetilde{\omega} \right)},\widetilde{\omega} \right) \, 
			u_\varepsilon(t,x,\widetilde{\omega}) \, \overline{Z_\varepsilon}(t,x,\widetilde{\omega}) \, dx \, dt} 
\\[5pt]
& \hspace{30pt}			+ \iint_{\mathbb{R}^{n+1}_T} U {\left(x, \Phi^{-1}{\left( \frac{x}{\varepsilon},\widetilde{\omega} \right)},\widetilde{\omega} \right)} \, 
			u_\varepsilon(t,x,\widetilde{\omega}) \, \overline{Z_\varepsilon}(t,x,\widetilde{\omega}) \, dx \, dt \big]= 0. 
\end{aligned}
\end{equation}
In order to simplify the manipulation of the above equation, we shall denote by $I_{k}^{\varepsilon}(k=1,2,3)$ the respective term in the $k^{\text{th}}$ brackets, so that we can rewrite the 
equation~\eqref{676745459023v} as $I_{1}^{\varepsilon}+I_{2}^{\varepsilon}+I_{3}^{\varepsilon}=0$.

\medskip		
The analysis of the $I_{1}^{\varepsilon}$ term is triggered  by the following computation 
$$
\begin{aligned}
	&\frac{\partial Z_\varepsilon}{\partial t}(t,x,\widetilde{\omega})=  e^{i \frac{\lambda(\theta^\ast) t}{\varepsilon^2} + 2i\pi \frac{\theta^\ast \! \cdot x}{\varepsilon}} 
	{\Big[ i\frac{\lambda(\theta^\ast)}{\varepsilon^2} { \big( \varphi(t,x) \, \Psi_\varepsilon(x,\widetilde{\omega},\theta^\ast) } }
\\[5pt]
			& \quad + \varepsilon\sum_{k=1}^n \frac{\partial \varphi}{\partial x_k}(t,x) \, \Lambda_{k,\varepsilon} \big)
			+ \frac{\partial \varphi}{\partial t}(t,x) \, \Psi_\varepsilon(x,\widetilde{\omega},\theta^\ast)  
			+ \, \varepsilon \sum_{k=1}^n \frac{\partial^2 \varphi}{\partial t \, \partial x_k}(t,x) \, \Lambda_{k,\varepsilon} \Big],
\end{aligned}
$$
therefore we have
$$
\begin{aligned}
I_{1}^{\varepsilon}&= i \int_{\mathbb{R}^n} v_\varepsilon^0 \, \overline{ {\big( \varphi (0,x)\, \Psi_\varepsilon(x,\widetilde{\omega},\theta^\ast) 
+\varepsilon  \sum_{k=1}^n \frac{\partial \varphi}{\partial x_k}(0,x) \, \Lambda_{k,\varepsilon}(x,\widetilde{\omega},\theta^\ast) \big)} } dx 
\\[5pt]
& - \frac{\lambda(\theta^\ast)}{\varepsilon^2} \iint_{\mathbb{R}^{n+1}_T} v_\varepsilon \, 
\overline{{\big( \varphi(t,x) \, \Psi_\varepsilon(x,\widetilde{\omega},\theta^\ast) +\varepsilon  \sum_{k=1}^n \frac{\partial \varphi}{\partial x_k}(t,x) \, 
\Lambda_{k,\varepsilon}(x,\widetilde{\omega},\theta^\ast) \big)}} dx \, dt 
\\[5pt] 
&- i \iint_{\mathbb{R}^{n+1}_T} v_\varepsilon \, \overline{ {\big( \frac{\partial \varphi}{\partial t}(t,x) \,
\Psi_\varepsilon(x,\widetilde{\omega},\theta^\ast) + \varepsilon \sum_{k=1}^n \frac{\partial^2 \varphi}{\partial t \, \partial x_k}(t,x) 
\Lambda_{k,\varepsilon}(x,\widetilde{\omega},\theta^\ast) \big)} }.
\end{aligned}
$$		
For the analysis of the term $I_{2}^{\varepsilon}$, we need to make the following computations  
$$
\begin{aligned}
\nabla Z_\varepsilon(t,x,\widetilde{\omega}) &= e^{i \frac{\lambda(\theta^\ast) t} {\varepsilon^2} + 2i\pi \frac{\theta^\ast \! \cdot x}{\varepsilon}} {\big[ \nabla \varphi(t,x) \, 
\Psi_\varepsilon(x,\widetilde{\omega},\theta^\ast) + \varphi(t,x) \, \nabla \Psi_\varepsilon(z,\widetilde{\omega},\theta^\ast) }
\\[5pt]
	&+  \varepsilon \sum_{k=1}^n \nabla {\big( \frac{\partial \varphi}{\partial x_k}(t,x) \big)} \, \Lambda_{k,\varepsilon}(x,\widetilde{\omega},\theta^\ast) 
	+ \varepsilon \sum_{k=1}^n \frac{\partial \varphi}{\partial x_k}(t,x) \, \nabla \Lambda_{k,\varepsilon}(x,\widetilde{\omega},\theta^\ast) 
\\[5pt]
	&+ \, 2i\pi \frac{\theta^\ast}{\varepsilon} { {\big( \varphi(t,x) \, \Psi_\varepsilon(x,\widetilde{\omega},\theta^\ast) + \varepsilon  
	\sum_{k=1}^n \frac{\partial \varphi}{\partial x_k}(t,x) \, \Lambda_{k,\varepsilon}(x,\widetilde{\omega},\theta^\ast) \big)} \big]} 
\\[5pt] 
	& = e^{i \frac{\lambda(\theta^\ast) t}{\varepsilon^2} + 2i\pi \frac{\theta^\ast \! \cdot x}{\varepsilon}} {\big[ \varphi(t,x) \, {\big( \nabla 
	+ 2i\pi\frac{\theta^\ast}{\varepsilon} \big)} \Psi_\varepsilon(x,\widetilde{\omega},\theta^\ast) } 
\\[5pt]
	&+ { \, \varepsilon \sum_{k=1}^n \frac{\partial \varphi}{\partial x_k}(t,x) \, {\big( \nabla + 2i\pi\frac{\theta^\ast}{\varepsilon} \big)} 
	\Lambda_{k,\varepsilon}(x,\widetilde{\omega},\theta^\ast) + \nabla \varphi(t,x) \, \Psi_\varepsilon(x,\widetilde{\omega},\theta^\ast) } 
\\[5pt]
	& + \, { \varepsilon \sum_{k=1}^n \nabla {\big( \frac{\partial \varphi}{\partial x_k}(t,x) \big)} \, \Lambda_{k,\varepsilon}(x,\widetilde{\omega},\theta^\ast) \big]},
\end{aligned}
$$	
and from this, we have 
$$
\begin{aligned}
&A {\left(\Phi^{-1}{\left( \frac{x}{\varepsilon},\widetilde{\omega} \right)},\widetilde{\omega} \right)} \nabla u_\varepsilon(t,x,\widetilde{\omega}) \cdot \overline{\nabla Z_\varepsilon}(t,x,\widetilde{\omega})
\\[5pt]
&= A {\left(\Phi^{-1}{\left( \frac{x}{\varepsilon},\widetilde{\omega} \right)},\widetilde{\omega} \right)} {\big[ \nabla u_\varepsilon(t,x,\widetilde{\omega}) \, 
e^{ -\left( {i \frac{\lambda(\theta^\ast) t}{\varepsilon^2} + 2i\pi \frac{\theta^\ast \! \cdot x}{\varepsilon}} \right)} \big]} 
\\[5pt]
&\cdot \big[ \overline{\varphi}(t,x) \, 
( \nabla - 2i\pi\frac{\theta^\ast}{\varepsilon} ) \overline{\Psi_\varepsilon}(x,\widetilde{\omega},\theta^\ast) } 
+  \varepsilon \! \sum_{k=1}^n \frac{\partial \overline{\varphi}}{\partial x_k}(t,x) \, {( \nabla \!\! - 2i\pi\frac{\theta^\ast}{\varepsilon})
\overline{\Lambda_{k,\varepsilon}}(x,\widetilde{\omega},\theta^\ast) 
\\[5pt]
&+ \nabla \overline{\varphi}(t,x) \, \overline{\Psi_\varepsilon}(x,\widetilde{\omega},\theta^\ast) 
+ \varepsilon \sum_{k=1}^n \nabla {\big( \frac{\partial \overline{\varphi}}{\partial x_k}(t,x) \big)} \, 
\overline{\Lambda_{k,\varepsilon}}(x,\widetilde{\omega},\theta^\ast) \big].
\end{aligned}
$$		
Then, from equation \eqref{974967uhghjzpzas} and using the terms
$$
{\big( \nabla + 2i\pi\frac{\theta^\ast}{\varepsilon} \big)} (v_\varepsilon(t,x,\widetilde{\omega}) \, \overline{\varphi}(t,x) ),
\quad 
\big( \nabla + 2i\pi\frac{\theta^\ast}{\varepsilon} \big) (v_\varepsilon(t,x,\widetilde{\omega}) \, \frac{\partial \overline{\varphi}}{\partial x_k}(t,x)), 
$$ 
it follows from the above equation that 
$$
    A {\left(\Phi^{-1}{\left( \frac{x}{\varepsilon},\widetilde{\omega} \right)},\widetilde{\omega} \right)} \nabla u_\varepsilon \cdot \overline{\nabla Z_\varepsilon}
    =\sum_{k=1}^n\left( I_{2,1}^{\varepsilon,k}+I_{2,2}^{\varepsilon,k}+I_{2,3}^{\varepsilon,k}\right)(t,x,\widetilde{\omega}),
$$
where 
$$
\begin{aligned}
 & I_{2,1}^{\varepsilon,k}(t,x,\widetilde{\omega}):=  \ve \,A {(\Phi^{-1}{\big( \frac{x}{\varepsilon},\widetilde{\omega} \big)},\widetilde{\omega} )} 
\big[ {\big( \nabla + 2i\pi\frac{\theta^\ast}{\varepsilon} \big)} {( v_\varepsilon(t,x,\widetilde{\omega}) \, \frac{\partial \overline{\varphi}}{\partial x_k}(t,x) )} \big]
 \\[5pt]
&\quad \cdot  \big[ {\big( \nabla - 2i\pi\frac{\theta^\ast}{\varepsilon} \big)} \overline{\Lambda_{k,\varepsilon}}(x,\widetilde{\omega},\theta^\ast) \big]
\\[5pt]
& - A(\Phi^{-1}{\big( \frac{x}{\varepsilon},\widetilde{\omega} \big)},\widetilde{\omega} )
\big[ v_\varepsilon(t,x,\widetilde{\omega}) \, \frac{\partial \overline{\varphi}}{\partial x_k}(t,x) \, e_k \big] \!
 \cdot \! \big[ {\big( \nabla - 2i\pi\frac{\theta^\ast}{\varepsilon} \big)} \overline{\Psi_\varepsilon}(x,\widetilde{\omega},\theta^\ast)	\big]
\\[5pt]
& + A (\Phi^{-1}{\big( \frac{x}{\varepsilon},\widetilde{\omega} \big)},\widetilde{\omega} )
{\big[ {\big( \nabla + 2i\pi\frac{\theta^\ast}{\varepsilon} \big)} 
( v_\varepsilon(t,x,\widetilde{\omega}) \, \frac{\partial \overline{\varphi}}{\partial x_k}(t,x))} \big] \!
\cdot \! \big[ {e_k} \overline{\Psi_\varepsilon}(x,\widetilde{\omega},\theta^\ast)\big],
\end{aligned}
$$
$$
\begin{aligned}
& I_{2,2}^{\varepsilon,k}(t,x,\widetilde{\omega}):=
 \frac{1}{n}A {(\Phi^{-1}{\big( \frac{x}{\varepsilon},\widetilde{\omega} \big)},\widetilde{\omega})} 
{\big[ {\big( \nabla + 2i\pi\frac{\theta^\ast}{\varepsilon} \big)} (v_\varepsilon(t,x,\widetilde{\omega})  \overline{\varphi}(t,x)) \big]}
\\[5pt]
& \quad\quad  \cdot \big[ {\big( \nabla - 2i\pi\frac{\theta^\ast}{\varepsilon} \big)} \overline{\Psi_\varepsilon}(x,\widetilde{\omega},\theta^\ast) \big]
\\[5pt]
& \quad \quad - A (\Phi^{-1}{\big( \frac{x}{\varepsilon},\widetilde{\omega} \big)},\widetilde{\omega} )
{\big[ v_\varepsilon(t,x,\widetilde{\omega}) \, \nabla \frac{\partial \overline{\varphi}}{\partial x_k}(t,x) \big]} 
\cdot {\big[ e_k \, \overline{\Psi_\varepsilon}(x,\widetilde{\omega},\theta^\ast) \big]}, 
\end{aligned}
$$
and 
$$
\begin{aligned}
&I_{2,3}^{\varepsilon,k}(t,x,\widetilde{\omega}):= A(\Phi^{-1}{\big( \frac{x}{\varepsilon},\widetilde{\omega} \big)},\widetilde{\omega} )
\big[ {\left( \varepsilon \nabla + 2i\pi\theta^\ast \right)} v_\varepsilon(t,x,\widetilde{\omega}) \big]
 \\[5pt]
 &\quad  \cdot \big[  \nabla {\big( \frac{\partial \overline{\varphi}}{\partial x_k}(t,x) \big)} \, \overline{\Lambda_{k,\varepsilon}}(x,\widetilde{\omega},\theta^\ast)  \big]
 \\[5pt]
&- A(\Phi^{-1}{\left( \frac{x}{\varepsilon},\widetilde{\omega} \right)},\widetilde{\omega})
\big[ v_\varepsilon(t,x,\widetilde{\omega}) \, \nabla \frac{\partial \overline{\varphi}}{\partial x_k}(t,x) \big] \! \cdot \!
 \big[ {\left( \varepsilon \nabla - 2i\pi\theta^\ast \right)} \overline{\Lambda_{k,\varepsilon}}(x,\widetilde{\omega},\theta^\ast) \big]. 
\end{aligned}
$$
Thus integrating in $\mathbb{R}^{n+1}_T$ we recover the $I_2^{\varepsilon}$ term, that is 
\begin{eqnarray}\label{HomProc1}
&&I_2^{\varepsilon}=\iint_{\mathbb{R}^{n+1}_T} A {\left(\Phi^{-1}{\left( \frac{x}{\varepsilon},\widetilde{\omega} \right)},\widetilde{\omega} \right)} \nabla u_\varepsilon(t,x,\widetilde{\omega}) \cdot \overline{\nabla Z_\varepsilon}(t,x,\widetilde{\omega}) \, dx \, dt\nonumber\\
&&\qquad\qquad=\sum_{k=1}^n\iint_{\mathbb{R}^{n+1}_T}\left( I_{2,1}^{\varepsilon,k}+I_{2,2}^{\varepsilon,k}+I_{2,3}^{\varepsilon,k}\right)(t,x,\widetilde{\omega})\,dx\,dt. 
\end{eqnarray}

Now, 
we intend to simplify the expression of $I_2^{\varepsilon}$ to a more comely one. For this aim, we shall take 
${ v_\varepsilon(t,\cdot,\widetilde{\omega}) \, \overline{\varphi}(t,\cdot) }$, $t \in (0,T)$, as a test function.
Then, we obtain
$$
\begin{aligned}
& \int_{\mathbb{R}^n} \!\! A(\Phi^{-1}{\big( \frac{x}{\varepsilon},\widetilde{\omega} \big)},\widetilde{\omega})
 [ \big( \nabla \!+ 2i\pi \frac{\theta^\ast}{\varepsilon} \big) {(v_\varepsilon(t,x,\widetilde{\omega}) \, \overline{\varphi})}] \! \cdot \! 
[ \big( \nabla \! - 2i\pi \frac{\theta^\ast}{\varepsilon} \big) \overline{\Psi_\varepsilon}(x,\widetilde{\omega},\theta^\ast)] dx
\\[5pt] 
& \quad = \frac{\lambda(\theta^\ast)}{\varepsilon^2}\int_{\mathbb{R}^n} {(v_\varepsilon(t,x,\widetilde{\omega}) \, 
\overline{\varphi}(t,x))} \, \overline{\Psi_\varepsilon}(x,\widetilde{\omega},\theta^\ast) \, dx 
\\[5pt]
& \quad - \frac{1}{\varepsilon^2}\int_{\mathbb{R}^n} V(\Phi^{-1}{\left( \frac{x}{\varepsilon},\widetilde{\omega} \right)},\widetilde{\omega}) \, 
{(v_\varepsilon(t,x,\widetilde{\omega}) \, \overline{\varphi}(t,x))} \, \overline{\Psi_\varepsilon}(x,\widetilde{\omega},\theta^\ast) \, dx. 
\end{aligned}
$$
Therefore, comparing with $I_{2,2}^{\varepsilon,k}(t,x,\widetilde{\omega})$ term obtained before, we have 
\begin{equation}
\label{HomProc2}
\begin{aligned}
&\iint_{\mathbb{R}^{n+1}_T}I_{2,2}^{\varepsilon,k}(t,x,\widetilde{\omega})\,dx\,dt
\\
& = \frac{\lambda(\theta^\ast)}{n\varepsilon^2}\iint_{\mathbb{R}^{n+1}_T} {(v_\varepsilon(t,x,\widetilde{\omega}) \, \overline{\varphi}(t,x))} \, \overline{\Psi_\varepsilon}(x,\widetilde{\omega},\theta^\ast) \, dx\,dt 
\\
& - \frac{1}{n\varepsilon^2}\iint_{\mathbb{R}^{n+1}_T} V {(\Phi^{-1}{\left( \frac{x}{\varepsilon},\widetilde{\omega} \right)},\widetilde{\omega})} \, 
(v_\varepsilon(t,x,\widetilde{\omega}) \, \overline{\varphi}(t,x)) \, \overline{\Psi_\varepsilon}(x,\widetilde{\omega},\theta^\ast) \, dx\,dt
\\
& - \iint_{\mathbb{R}^{n+1}_T} \!\!\! A(\Phi^{-1}{\left( \frac{x}{\varepsilon},\widetilde{\omega} \right)},\widetilde{\omega}) {[ v_\varepsilon(t,x,\widetilde{\omega}) \, 
\nabla \frac{\partial \overline{\varphi}}{\partial x_k}(t,x) ]} \cdot {[ e_k \, \overline{\Psi_\varepsilon}(x,\widetilde{\omega},\theta^\ast)]}\,dx\,dt. 
\end{aligned}
\end{equation}
Analogously, taking ${ v_\varepsilon(t,\cdot,\widetilde{\omega}) \, \displaystyle\frac{\partial \overline{\varphi}}{\partial x_k}(t,\cdot) }$, $t \in (0,T)$, 
with ${ k\in\{1,\ldots,n\} }$ as a test function,
taking into account that 
$\nabla_{\! \theta} \lambda(\theta^\ast)= 0$ and comparing this
expression with $I_{2,1}^{\varepsilon,k}(t,x,\widetilde{\omega})$, we deduce that
\begin{eqnarray}
\label{HomProc3}
&& \iint_{\mathbb{R}^{n+1}_T}I_{2,1}^{\varepsilon,k}(t,x,\widetilde{\omega})\,dx\,dt\nonumber
\\
&&\quad= \frac{\lambda(\theta^\ast)}{\varepsilon}\iint_{\mathbb{R}^{n+1}_T} 
{( v_\varepsilon(t,x,\widetilde{\omega}) \, \frac{\partial \overline{\varphi}}{\partial x_k}(t,x))} \, \overline{\Lambda_{k,\varepsilon}}(x,\widetilde{\omega},\theta^\ast) \, dx\,dt
\\
&&\quad - \frac{1}{\varepsilon}\iint_{\mathbb{R}^{n+1}_T} V {(\Phi^{-1}{\left( \frac{x}{\varepsilon},\widetilde{\omega} \right)},\widetilde{\omega})} \, 
{( v_\varepsilon(t,x,\widetilde{\omega}) \, \frac{\partial \overline{\varphi}}{\partial x_k}(t,x))} \, 
\overline{\Lambda_{k,\varepsilon}}(x,\widetilde{\omega},\theta^\ast) \, dx\,dt.\nonumber
\end{eqnarray}
Therefore, summing equations \eqref{HomProc2}, \eqref{HomProc3}, we arrive at 
\begin{eqnarray}
\label{HomProc4}
&&\sum_{k=1}^n\iint_{\mathbb{R}^{n+1}_T}\big(I_{2,1}^{\varepsilon,k}+I_{2,2}^{\varepsilon,k}\big)(t,x,\widetilde{\omega})\,dxdt\nonumber
\\
&&\quad \!\!={\frac{\lambda(\theta^\ast)}{\varepsilon^2} \iint_{\mathbb{R}^{n+1}_T} \!\!\! v_\varepsilon(t,x,\widetilde{\omega}) \,
 \overline{\varphi(t,x) \, \Psi_\varepsilon(x,\widetilde{\omega},\theta^\ast) + \varepsilon \! \sum_{k=1}^n\frac{\partial \varphi}{\partial x_k}(t,x) \, 
 \Lambda_{k,\varepsilon}(x,\widetilde{\omega},\theta^\ast)} \, dx dt }\nonumber
 \\[4pt]
&& \quad- \frac{1}{\varepsilon^2} \iint_{\mathbb{R}^{n+1}_T} \! V {\left(\Phi^{-1}{\left( \frac{x}{\varepsilon},\widetilde{\omega} \right)},\widetilde{\omega} \right)} \, 
v_\varepsilon(t,x,\widetilde{\omega})
\\
& & \hspace{3cm} \times \,  \overline{\varphi(t,x) \, \Psi_\varepsilon(x,\widetilde{\omega},\theta^\ast) 
+ \varepsilon \sum_{k=1}^n\frac{\partial \varphi}{\partial x_k}(t,x) \, \Lambda_{k,\varepsilon}(x,\widetilde{\omega},\theta^\ast)} \, dx dt \nonumber
\\[4pt]
& & \quad - \sum_{k=1}^n \iint_{\mathbb{R}^{n+1}_T} \! A {(\Phi^{-1}{\left( \frac{x}{\varepsilon},\widetilde{\omega} \right)},\widetilde{\omega})} 
{[ v_\varepsilon(t,x,\widetilde{\omega}) \, \nabla \frac{\partial \overline{\varphi}}{\partial x_k}(t,x)]} \! \cdot \! {[ e_k \, \overline{\Psi_\varepsilon}(x,\widetilde{\omega},\theta^\ast)]} \, dx dt.
\nonumber
\end{eqnarray}
Moreover, expressing the $I_3^{\varepsilon}$ term as  
$$
\begin{aligned}
& I_3^{\varepsilon}= \iint_{\mathbb{R}^{n+1}_T}\frac{1}{\varepsilon^2}\, V {(\Phi^{-1}{\left( \frac{x}{\varepsilon},\widetilde{\omega} \right)},\widetilde{\omega})}\,
v_{\ve} \, 
\overline{\varphi(t,x) \, \Psi_\varepsilon(x,\widetilde{\omega},\theta^\ast)+\varepsilon  \sum_{k=1}^n \frac{\partial \varphi}{\partial x_k}(t,x)
\Lambda_{k,\varepsilon}}\,dx\,dt
\\
&\quad+\iint_{\mathbb{R}^{n+1}_T}U {( \Phi^{-1}{\left( \frac{x}{\varepsilon},\widetilde{\omega} \right)},\widetilde{\omega})}\, 
v_\varepsilon \, 
\overline{ \varphi(t,x) \, \Psi_\varepsilon(x,\widetilde{\omega},\theta^\ast)+\varepsilon  \sum_{k=1}^n \frac{\partial \varphi}{\partial x_k}(t,x)
\Lambda_{k,\varepsilon}}\,dx\,dt,
\end{aligned}
$$
adding with \eqref{HomProc4} and $I_1^{\varepsilon}$, we obtain
\begin{equation}
\label{HomProc5}
\begin{aligned}
&I_1^{\varepsilon}+\sum_{k=1}^n\iint_{\mathbb{R}^{n+1}_T}\Big(I_{2,1}^{\varepsilon,k}+I_{2,2}^{\varepsilon,k}\Big)(t,x,\widetilde{\omega})\,dx\,dt+I_3^{\varepsilon}\nonumber
\\[5pt]
&={ i \int_{\mathbb{R}^n} \!\!\! v_\varepsilon^0(x,\widetilde{\omega}) \, \overline{ \varphi (0,x) \, \Psi_\varepsilon(x,\widetilde{\omega},\theta^\ast) } dx 
- i \! \iint_{\mathbb{R}^{n+1}_T}\!\! \!\! v_\varepsilon(t,x,\widetilde{\omega}) \, \overline{ \frac{\partial \varphi}{\partial t}(t,x) \, \Psi_\varepsilon(x,\widetilde{\omega},\theta^\ast) } }\nonumber
\\[5pt]
&- \sum_{k,\ell=1}^n\iint_{\mathbb{R}^{n+1}_T} \!\!\!
{ v_\varepsilon(t,x,\widetilde{\omega}) \, e_\ell \, \frac{\partial^2 \overline{\varphi}}{\partial x_\ell \, \partial x_k}(t,x) }
\cdot \overline{ A {(\Phi^{-1}{( \frac{x}{\varepsilon},\widetilde{\omega} )},\widetilde{\omega})} { \; e_k \Psi_\varepsilon (x,\widetilde{\omega},\theta^\ast)} } 
\,dx dt\nonumber
\\[5pt]
&+\iint_{\mathbb{R}^{n+1}_T}U {(\Phi^{-1}{\left( \frac{x}{\varepsilon},\widetilde{\omega} \right)},\widetilde{\omega})}\, 
v_\varepsilon(t,x,\widetilde{\omega})\,\overline{ \varphi(t,x) \, \Psi_\varepsilon(x,\widetilde{\omega},\theta^\ast)}\,dx dt +\, \mathrm{O}(\varepsilon).
\end{aligned}
\end{equation}
For $\ve=\ve'(\widetilde{\omega})$ and due to Step 2, i.e. 
$
v_{\varepsilon^\prime}(t,x,\widetilde{\omega}) \; \xrightharpoonup[\varepsilon^\prime \to 0]{2-{\rm s}}\; v_{\widetilde{\omega}}(t,x) \, \Psi(z,\omega, \theta^\ast),
$
we obtain after letting $\ve'\to 0$, from the previous equation
\begin{equation}
\label{HomProc5}
\begin{aligned}
&\lim_{\ve'\to 0}\Big(I_1^{\varepsilon'}+\sum_{k=1}^n\iint_{\mathbb{R}^{n+1}_T}\Big(I_{2,1}^{\varepsilon',k}
+I_{2,2}^{\varepsilon',k}\Big)(t,x,\widetilde{\omega})\,dx\,dt+I_3^{\varepsilon'}\Big)
\\
&= i \int_{\mathbb{R}^n} {\left( \int_\Omega \int_{\Phi([0,1)^n, \omega)} {\left\vert \Psi(z,\omega,\theta^\ast) \right\vert}^2 dz \, d\mathbb{P}  \right)} v^0(x) \, 
\overline{\varphi}(0,x) \, dx
\\
& - i \iint_{\mathbb{R}^{n+1}_T} \!\! {( \int_\Omega \int_{\Phi([0,1)^n, \omega)} 
{\left\vert \Psi(z,\omega,\theta^\ast) \right\vert}^2 dz \, d\mathbb{P})}  \, v_{\widetilde{\omega}}(t,x) \, \frac{\partial \overline{\varphi}}{\partial t} (t,x) \, dx \, dt 
\\
& - \sum_{k,\ell=1}^n \iint_{\mathbb{R}^{n+1}_T} \!\! {( \int_\Omega \int_{\Phi([0,1)^n, \omega)} \!\! 
A{\left( \Phi^{-1}(z,\omega),\omega \right)} \, {( e_\ell \, \Psi)} \cdot {( e_k \, \overline{\Psi})} \, dz \, d\mathbb{P})}
\\
&  \quad \times \, v_{\widetilde{\omega}}(t,x) \, \frac{\partial^2 \overline{\varphi}}{\partial x_\ell \, \partial x_k}(t,x) \, dx \, dt 
\\
& +\iint_{\mathbb{R}^{n+1}_T} \!\! {( \int_\Omega \int_{\Phi([0,1)^n, \omega)} \!\!\!\!  U {\left( \Phi^{-1}(z,\omega),\omega \right)} 
{\vert \Psi \vert}^2 dz \, d\mathbb{P})} \, v_{\widetilde{\omega}}(t,x) \, \overline{\varphi}(t,x) \, dx \, dt.
\end{aligned}
\end{equation}
Proceeding in the same way with respect to the term $I_{2,3}^{\varepsilon,k}(t,x,\widetilde{\omega})$, we obtain
\begin{eqnarray}
\label{HomProc6}
&&\lim_{\ve'\to 0} \sum_{k=1}^n\iint_{\mathbb{R}^{n+1}_T}I_{2,3}^{\varepsilon',k}(t,x,\widetilde{\omega})\,dx\,dt\nonumber\\
&&\quad =\sum_{k,\ell=1}^n \iint_{\mathbb{R}^{n+1}_T} \!\! \Big( \int_\Omega \int_{\Phi([0,1)^n, \omega)} \!\! A {\left( \Phi^{-1}(z,\omega),\omega \right)} \, 
{\left( {\left( \nabla_{\!\! z} + 2i\pi\theta^\ast \right)} \Psi(z,\omega,\theta^\ast) \right)} \nonumber
\\
&&\hspace{2cm} \cdot {\left(  e_\ell \, \overline{\Lambda_k}(z,\omega,\theta^\ast)  \right)} \, dz \, d\mathbb{P}\Big)
v_{\widetilde{\omega}}(t,x) \, \frac{\partial^2 \overline{\varphi}}{\partial x_\ell \, \partial x_k}(t,x) \, dx \, dt\nonumber
\\
&&\quad-\sum_{k,\ell=1}^n \iint_{\mathbb{R}^{n+1}_T} \!\! \Big( \int_\Omega \int_{\Phi([0,1)^n, \omega)} \!\! A {\left( \Phi^{-1}(z,\omega),\omega \right)}
\, {\left( e_\ell \, \Psi(z,\omega,\theta^\ast) \right)}\\
&&\hspace{2cm} \cdot {\left( \nabla_{\!\! z} - 2i\pi\theta^\ast \right)} \overline{\Lambda_k}(z,\omega,\theta^\ast) \, dz \, d\mathbb{P}\Big)
v_{\widetilde{\omega}}(t,x) \, \frac{\partial^2 \overline{\varphi}}{\partial x_\ell \, \partial x_k}(t,x) \, dx \, dt. \nonumber
\end{eqnarray}
Therefore, since $I_1^{\varepsilon'}+I_2^{\varepsilon'}+I_3^{\varepsilon'}=0$, (see \eqref{676745459023v}), 
combining the two last equations 
we conclude that, the function $v_{\widetilde{\omega}}$ is a distribution solution of the following homogenized Schr\"odinger equation
\begin{equation}
\label{567yt65trftdfxxzxzzxcvbn}
\left\{
\begin{array}{c}
	i\displaystyle\frac{\partial v_{\widetilde{\omega}}}{\partial t}(t,x) - {\rm div} {\big( B^\ast \nabla v_{\widetilde{\omega}}(t,x) \big)} 
	+ U^\ast v_{\widetilde{\omega}}(t,x)= 0, \;\, (t,x) \in \mathbb{R}^{n+1}_T, 
	\\ [5pt]
	v_{\widetilde{\omega}}(0,x)=v^0(x), \;\, x\in\mathbb{R}^n,
\end{array}
\right.
\end{equation}
where the effective tensor 
\begin{eqnarray}
\label{7358586tygfjdshfbvvcc}
&&B_{k,\ell}^{\ast}= \frac{1}{c_\psi} { \int_\Omega \int_{\Phi\left([0,1)^n, \omega\right)}\!\!\!  \big\{A {\left( \Phi^{-1}(z,\omega),\omega \right)} \, {\left( e_\ell \, \Psi(z,\omega,\theta^\ast) \right)}}
\cdot {\left( e_k \, \overline{\Psi}(z,\omega,\theta^\ast) \right)}\nonumber\\
&&\qquad\qquad+A {\left( \Phi^{-1}(z,\omega),\omega \right)} \, {\left( e_\ell \, \Psi(z,\omega,\theta^\ast) \right)}
\cdot {\left( (\nabla_{\!\! z} - 2i\pi\theta^\ast) \overline{\Lambda_k}(z,\omega,\theta^\ast) \right)}\nonumber\\
&&\qquad\qquad\qquad-A {\left( \Phi^{-1}(z,\omega),\omega \right)} \, {\Big( ( \nabla_{\!\! z} + 2i\pi\theta^\ast) \Psi(z,\omega,\theta^\ast) \Big)}\nonumber\\
&&\hspace{6cm}\cdot {\left(  e_\ell \, \overline{\Lambda_k}(z,\omega,\theta^\ast)  \right)}\big\} \, dz \, d\mathbb{P}(\omega),
\end{eqnarray}
for ${ k, \ell \in \{1,\ldots,n\} }$,
and the effective potential 
\begin{equation*}
U^\ast = c_\psi^{-1} \int_\Omega \int_{\Phi([0,1)^n, \omega)} U {\left( \Phi^{-1}(z,\omega), \omega \right)} {\vert \Psi (z,\omega,\theta^\ast)\vert}^2 dz \, d\mathbb{P}(\omega)
\end{equation*}
with 
$$
\begin{aligned}
	c_\psi= \!\! \int_\Omega \int_{\Phi([0,1)^n, \omega)} &\!\!\! {\vert \Psi(z,\omega, \theta^\ast) \vert}^2 dz \, d\mathbb{P}(\omega)
	\\[5pt]
	&\equiv \int_\Omega \int_{\Phi([0,1)^n, \omega)} \!\!\! {\vert \psi{( \Phi^{-1}{( \frac{x}{\varepsilon},\omega)},\omega)} \vert}^2 dz \, d\mathbb{P}(\omega).
\end{aligned}
$$
Moreover, we are allowed to change the tensor $B^\ast$ in the equation \eqref{567yt65trftdfxxzxzzxcvbn} by the 
corresponding symmetric part of it, that is 
$
			A^\ast = \big(B^\ast + (B^\ast)^t\big)/ 2.
$

\medskip
4.({\it\bf The Form of the Matrix $A^{\ast}$.}) Now, we show that the homogenized tensor $A^{\ast}$ is a real value matrix, and it coincides with the hessian matrix of 
the function ${ \theta \mapsto \lambda(\theta) }$ in the point $\theta^{\ast}$. In fact, due to ${ \nabla_{\!\! \theta} \lambda (\theta^\ast) = 0 }$
we can write
\begin{eqnarray}\label{968r6f7tyudstfyusgdjsdxxxzxzxzx}
&&\quad\frac{1}{4\pi^2} \frac{\partial^2 \lambda(\theta^\ast)}{\partial \theta_\ell \, \partial \theta_k}\,c_\psi\nonumber
\\
&&\qquad\qquad = \int_\Omega \int_{\Phi ([0,1)^n, \omega)}\big\{A {\left( \Phi^{-1}(z,\omega),\omega \right)}{\left( e_\ell \, \Psi(z,\omega,\theta^\ast) \right)} \cdot
{\left( e_k \, \overline{\Psi}(z,\omega,\theta^\ast) \right)}\nonumber\\
&&\qquad\qquad\qquad+A {\left( \Phi^{-1}(z,\omega),\omega \right)} \, {\left( e_\ell \, \Psi(z,\omega,\theta^\ast) \right)} \cdot {\left( (\nabla_{\!\! z} - 2i\pi\theta^\ast) \overline{\Lambda_k}(z,\omega,\theta^\ast) \right)}\nonumber\\
&&\qquad\qquad\qquad-A {\left( \Phi^{-1}(z,\omega),\omega \right)} \, {\left[ ( \nabla_{\!\! z} + 2i\pi\theta^\ast) \Psi(z,\omega,\theta^\ast) \right]} \cdot {\left(  e_\ell \, \overline{\Lambda_k}(z,\omega,\theta^\ast)  \right)}\nonumber\\
&&\qquad\qquad\qquad+ 
A {\left( \Phi^{-1}(z,\omega),\omega \right)} \, {\left( e_k \, \Psi(z,\omega,\theta^\ast) \right)} \cdot {\left( e_\ell \, \overline{\Psi}(z,\omega,\theta^\ast) \right)}\nonumber\\
&&\qquad\qquad\qquad+
A {\left( \Phi^{-1}(z,\omega),\omega \right)} \, {\left( e_k \, \Psi(z,\omega,\theta^\ast) \right)} \cdot {\left( (\nabla_{\!\! z} - 2i\pi\theta^\ast) \overline{\Lambda_\ell}(z,\omega,\theta^\ast) \right)}\nonumber\\
&&\qquad\qquad\qquad-
A {\left( \Phi^{-1}(z,\omega),\omega \right)} \, {\left( (\nabla_{\!\! z} + 2i\pi\theta^\ast) \Psi(z,\omega,\theta^\ast) \right)}\\
&&\hspace{8.0cm} \cdot {\left(  e_k \, \overline{\Lambda_\ell}(z,\omega,\theta^\ast)  \right)}\nonumber\big\}\, dz \, d\mathbb{P}(\omega),
\end{eqnarray}
from which we obtain 
\begin{equation*}
	A^\ast = \frac{1}{8\pi^2} \, D^2_{\! \theta} \lambda(\theta^\ast).
\end{equation*}
Therefore, from Remark \ref{REMCOSTCOEFF} we deduce the 
well-posedness of the homogenized 	Schr\"odinger \eqref{567yt65trftdfxxzxzzxcvbn}. Hence the function ${ v_{\widetilde{\omega}} \in L^2(\mathbb{R}^{n+1}_T) }$ 
does not depend on ${ \widetilde{\omega} \in {\Omega} }$. Moreover, denoting by $v$ the unique solution of the problem~ \eqref{567yt65trftdfxxzxzzxcvbn}, 
we have that the sequence ${ \{v_\varepsilon(t,x,\widetilde{\omega})\}_{\varepsilon > 0} \subset L^2(\mathbb{R}^{n+1}_T) }$ $\Phi_{\omega}-$two-scale converges to the function 
$$
     v(t,x) \, \Psi(z,\omega,\theta^\ast) \equiv v(t,x) \, \psi {\left(\Phi^{-1}(z,\omega),\omega,\theta^\ast \right)}.
$$

\medskip
5.({\it\bf A Corrector-type Result.}) 
Finally, we show the following corrector type result, that is, for a.e. ${ \widetilde{\omega} \in \Omega }$
$$
\lim_{\varepsilon \to 0} \iint_{\mathbb{R}^{n+1}_T} \big|v_\varepsilon (t,x,\widetilde{\omega}) - v(t,x) \, 
\psi{\left( \Phi^{-1} {\left(\frac{x}{\varepsilon},\widetilde{\omega} \right)}, \widetilde{\omega}, \theta^\ast \right)} \big|^2 dx \, dt= 0.
$$
We begin by the simple observation 
\begin{equation}
\label{86576567tjhghjgnbmnb}
\begin{aligned}
&\iint_{\mathbb{R}^{n+1}_T} | v_\varepsilon (t,x,\widetilde{\omega}) - v(t,x) \, 
\psi{\left( \Phi^{-1} {\left(\frac{x}{\varepsilon},\widetilde{\omega} \right)}, \widetilde{\omega}, \theta^\ast \right)} |^2 dx dt 
\\
& \quad = \iint_{\mathbb{R}^{n+1}_T} {\left\vert v_\varepsilon (t,x,\widetilde{\omega}) \right\vert}^2 dx \, dt  
\\
& \quad - \iint_{\mathbb{R}^{n+1}_T} 
v_\varepsilon(t,x,\widetilde{\omega}) \, \overline{ v(t,x) \, \psi{\left( \Phi^{-1} {\left(\frac{x}{\varepsilon},\widetilde{\omega} \right)}, \widetilde{\omega}, \theta^\ast \right)} } \, dx dt  
\\
& \quad - \iint_{\mathbb{R}^{n+1}_T} \overline{v_\varepsilon(t,x,\widetilde{\omega})} \, v(t,x) \, \psi{\left( \Phi^{-1} {\left(\frac{x}{\varepsilon},\widetilde{\omega} \right)}, \widetilde{\omega}, \theta^\ast \right)} \, dx \, dt  
\\
& \quad + \iint_{\mathbb{R}^{n+1}_T} {\left\vert v(t,x) \, \psi{\left( \Phi^{-1} {\left(\frac{x}{\varepsilon},\widetilde{\omega} \right)}, \widetilde{\omega}, \theta^\ast \right)} \right\vert}^2 dx \, dt.
\end{aligned}
\end{equation}
From Lemma~\ref{63457rf2wertgh} we see that, the first integral of the right hand side of the above equation satisfies,
for all ${ t\in [0,T] }$ and a.e. ${ \widetilde{\omega} \in \Omega }$
		\begin{eqnarray*}
			\int_{\mathbb{R}^n} {\left\vert v_\varepsilon (t,x,\widetilde{\omega}) \right\vert}^2 dx & = & \int_{\mathbb{R}^n} {\left\vert u_\varepsilon (t,x,\widetilde{\omega}) \right\vert}^2 dx 
			= \;\, \int_{\mathbb{R}^n} {\left\vert v_\varepsilon^0 (x,\widetilde{\omega}) \right\vert}^2 dx \\
			& = & \int_{\mathbb{R}^n} {\left\vert v^0(x) \, \psi{\left( \Phi^{-1} {\left(\frac{x}{\varepsilon},\widetilde{\omega} \right)}, \widetilde{\omega}, \theta^\ast \right)} \right\vert}^2 dx.
		\end{eqnarray*}
Using the elliptic regularity theory (see E. De Giorgi \cite{Giorgi}, G. Stampacchia \cite{Stampacchia}), 
it follows that $\psi(\theta) \in L^\infty(\mathbb{R}^n; L^2(\Omega))$ and we can apply the Ergodic Theorem to obtain
$$
\begin{aligned}
\lim_{\varepsilon \to 0} & \iint_{\mathbb{R}^{n+1}_T} {\left\vert v_\varepsilon (t,x,\widetilde{\omega}) \right\vert}^2 dx \, dt 
\\
& = \lim_{\varepsilon \to 0} \iint_{\mathbb{R}^{n+1}_T} {\left\vert v^0(x) \right\vert}^2 {\left\vert \psi{\left( \Phi^{-1} {\left(\frac{x}{\varepsilon},\widetilde{\omega} \right)}, \widetilde{\omega}, \theta^\ast \right)} \right\vert}^2 dx dt 
\\
& = c_\Phi^{-1} \iint_{\mathbb{R}^{n+1}_T} \! \int_\Omega \int_{\Phi([0,1)^n, \omega)} {\left\vert v^0(x) \,
\psi{\left( \Phi^{-1}(z,\omega),\omega,\theta^\ast \right)} \right\vert}^2  dz \, d\mathbb{P} \, dx dt. 
\end{aligned}
$$
Similarly, we have 	 
\begin{multline*}
    \lim_{\varepsilon \to 0} \iint_{\mathbb{R}^{n+1}_T} {\left\vert v(t,x) \, \psi{\left( \Phi^{-1} {\left(\frac{x}{\varepsilon},\widetilde{\omega} \right)}, \widetilde{\omega}, \theta^\ast \right)} \right\vert}^2 dx dt
\\
    = c_\Phi^{-1} \iint_{\mathbb{R}^{n+1}_T} \! \int_\Omega \int_{\Phi([0,1)^n, \omega)} {\left\vert v(t,x) \, \psi{\left( \Phi^{-1}(z,\omega),\omega,\theta^\ast \right)} \right\vert}^2  dz \, d\mathbb{P} \, dx dt.
\end{multline*}
Moreover, seeing that for a.e. ${ \widetilde{\omega} \in \Omega }$	
$$
\begin{aligned}
\lim_{\varepsilon \to 0} & \iint_{\mathbb{R}^{n+1}_T}  v_\varepsilon(t,x,\widetilde{\omega}) \, \overline{ v(t,x) \, \psi{\left( \Phi^{-1} {\left(\frac{x}{\varepsilon},\widetilde{\omega} \right)}, \widetilde{\omega}, \theta^\ast \right)} } \, dx dt
\\
& = c_\Phi^{-1} \iint_{\mathbb{R}^{n+1}_T} \! \int_\Omega \int_{\Phi([0,1)^n, \omega)} 
	\!\!\! v(t,x) \, \psi{\left( \Phi^{-1}(z,\omega),\omega,\theta^\ast \right)} \, 
\\
& \qquad \qquad \qquad \qquad  \times \overline{v(t,x) \, \psi{\left( \Phi^{-1}(z,\omega),\omega,\theta^\ast \right)}} \, dz \, d\mathbb{P} \, dx dt,
\end{aligned}
$$
we can make Â­${ \varepsilon \to 0 }$ in the equation~\eqref{86576567tjhghjgnbmnb} to find 
\begin{eqnarray*}
&&\lim_{\varepsilon \to 0} \iint_{\mathbb{R}^{n+1}_T} {\left\vert v_\varepsilon (t,x,\widetilde{\omega}) - v(t,x) \, \psi{\left( \Phi^{-1} {\left(\frac{x}{\varepsilon},\widetilde{\omega} \right)}, \widetilde{\omega}, \theta^\ast \right)} \right\vert}^2 dx dt 
\\
&&\qquad=c_\Phi^{-1}{\big( \int_\Omega \int_{\Phi([0,1)^n, \omega)} {\left\vert \psi{\left( \Phi^{-1}(z,\omega),\omega,\theta^\ast \right)} \right\vert}^2  dz \, d\mathbb{P}(\omega) \big)}
\\
&&\qquad\qquad\qquad\qquad \times  \big({\iint_{\mathbb{R}^{n+1}_T} {\left\vert v^0(x) \right\vert}^2 dx \, dt}-{\iint_{\mathbb{R}^{n+1}_T} {\left\vert v(t,x) \right\vert}^2 dx \, dt}\big),
\end{eqnarray*}
for a.e. ${ \widetilde{\omega} \in \Omega }$. Therefore, using the energy conservation of 
the homogenized Schr\"odinger equation~\eqref{HomSchEqu}, that is, for all ${ t\in [0,T] }$
		\begin{equation*}
			\int_{\mathbb{R}^n} {\left\vert v(t,x) \right\vert}^2 dx = \int_{\mathbb{R}^n} {\left\vert v^0(x) \right\vert}^2 dx,
		\end{equation*}
we obtain that, for a.e. ${ \widetilde{\omega} \in \Omega }$
\begin{equation*}
			\lim_{\varepsilon \to 0} \iint_{\mathbb{R}^{n+1}_T} |v_\varepsilon (t,x,\widetilde{\omega}) - v(t,x) \, 
			\psi{( \Phi^{-1} {\left(\frac{x}{\varepsilon},\widetilde{\omega} \right)}, \widetilde{\omega}, \theta^\ast)} |^2 dx dt= 0,
		\end{equation*}
completing the proof of the theorem. 		
\end{proof}

\subsection{Radom Perturbations of the Quasiperiodic Case}

In this section, we shall give a nice application of the framework introduced in 
this paper, which can be used to homogenize a model beyond 
the periodic settings considered by Allaire and Piatnitski in~\cite{AllairePiatnitski}. 
%

\medskip
Let $n,m\ge 1$ be integers numbers and $\lambda_1,\cdots,\lambda_m$ be vectors in 
$\R^n$ linearly independent over the set $\mathbb{Z}$ satisfying 
the condition
\begin{equation}
\label{7863948tyfedf}
\big\{k\in\mathbb{Z}^m;\,|k_1\lambda_1+\cdots+k_m\lambda_m|<d\big\}
\end{equation}
is a finite set for any $d>0$.  Therefore, given a stochastic deformation 
$\Phi$, we have $\mathcal{H}_\Phi \subset \! \subset \mathcal{L}_\Phi$
under condition \eqref{7863948tyfedf}, 
and it follows a solution of the Bloch's spectral cell equation, 
see \cite{VCWNJS}. 

Let $\left(\Omega_0,\mathcal{F}_0,\mathbb{P}_0\right)$ be a probability space and 
$\tau_0:\mathbb{Z}^n\times \Omega_0\to\Omega_0$ 
be a discrete ergodic dynamical system and $\R^m/{\mathbb{Z}^m}$ be the $m-$dimensional torus which can be identified with the cube $[0,1)^m$.  
For $\Omega:=\Omega_0\times [0,1)^m$, consider the following 
continuous dynamical system $T:\R^n\times \Omega\to \Omega$, defined by 
$$
T(x)(\omega_0,s):=\Big(\tau_{\left\lfloor s+Mx \right\rfloor}\omega_0,s+Mx-\left\lfloor s+Mx\right\rfloor\Big),
$$
where $M$ is the matrix $M=\Big(\lambda_i\cdot e_j{\Big)}_{i=1,j=1}^{m,n}$ and 
$\left\lfloor y\right\rfloor$ denotes the unique element in $\mathbb{Z}^m$ such that $y-\left\lfloor y\right\rfloor\in [0,1)^m$. Now, we consider $[0,1)^m-$periodic functions 
$A_{\rm per}:\R^m\to\R^{n^2},\,V_{\rm per}:\R^m\to\R$ and $U_{\rm per}:\R^m\to\R$ such that 
\begin{itemize}
\item There exists $a_0,a_1>0$ such that for all $\xi\in\R^n$ and for a.e $y\in\R^m$ we have 
$$
a_0|\xi|^2\le A_{\rm per}(y)\xi\cdot \xi\le a_1|\xi|^2.
$$
\item $V_{\rm per},\,U_{\rm per}\in L^{\infty}(\R^m)$.
\end{itemize}
Let $B_{\rm per}:\R^m\to\R^{n^2}$ be a $[0,1)^m-$periodic matrix and $\Upsilon:\R^n\times [0,1)^m\to\R^n$ be any stochastic diffeomorphism
satisfying
$$
\nabla \Upsilon (x,s)=B_{\rm per}\Big(T(x)(\omega_0,s)\Big).
$$ 
Thus, we define the following stochastic deformation $\Phi:\R^n\times \Omega\to \R^n$ by 
$\Phi(x,\omega)=\Upsilon(x,s)+{\bf X}(\omega_0)$, 
where we have used the notation $\omega$ for the pair $(\omega_0,s)\in\Omega$ and ${\bf X}:\Omega_0\to\R^n$ is a random vector.  Now, taking 
$$
   A(x,\omega):=A_{\rm per}\left(T(x)\omega\right),\,V(x,\omega):=V_{\rm per}\left(T(x)\omega\right), 
   \; U(x,\omega):= U_{\rm per}\left(T(x)\omega\right)
$$ 
in the equation~\eqref{jhjkhkjhkj765675233}, it can be seen after some computations that the spectral equation correspondent is 
\begin{equation}\label{ApHom}
			\left\{
			\begin{array}{l}
				-{\Big( {\rm div}_{\rm {QP}} + 2i\pi \theta \Big)} {\left[ A _{\rm per}{\left(\cdot \right)} {\Big( \nabla^{\rm {QP}} + 2i\pi\theta \Big)} {\Psi}_{\rm per}(\cdot) \right]}
\\ [7.5pt]
 \hspace{2.0cm}+ V_{\rm per}{\left(\cdot \right)} {\Psi}_{\rm per}(\cdot) = \lambda {\Psi}_{\rm per}(\cdot) \;\; \text{in} \,\; [0,1)^m, \\ [7.5pt]
				\hspace{1.5cm} {\Psi}_{\rm per}(\cdot) \;\;\; \psi \;\, \text{is a $[0,1)^m-$periodic function},
			\end{array}
			\right.
		\end{equation}
where the operators ${\rm div}_{\rm {QP}}$ and $\nabla^{\rm{QP}}$ are defined as 
\begin{itemize}
\item $\left(\nabla^{\rm {QP}}u_{\rm per}\right)(y):=B_{\rm per}^{-1}(y)M^{\ast}\left(\nabla u_{\rm per}\right)(y)$;
\item $\left(\rm{div}_{\rm{QP}}\,a\right)(y):=\rm{div}\left(M B_{\rm per}^{-1}(\cdot)a(\cdot)\right)(y)$.
\end{itemize}

Assume that for some $\theta^{\ast}\in\R^n$, the spectral equation~\eqref{ApHom} admits a solution 
$\big(\lambda(\theta^{\ast}),\Psi_{\rm per}(\theta^{\ast})\big)\in \R\times H^1\left([0,1)^m\right)$, such that \eqref{conds} holds.
Then, we consider the problem~\eqref{jhjkhkjhkj765675233} with new coefficients  as highlighted above and with well-prepared initial data, that is, 
$$
u_{\varepsilon}(x,\omega):=e^{2\pi i \frac{\theta^{\ast}\cdot x}{\varepsilon}}\,{\Psi}_{\rm per}\Big(T\left(\Phi^{-1}\left(\frac{x}{\varepsilon},\omega\right)\right)\omega,\theta^{\ast}\Big)
v^0(x),
$$ 
for $(x,\omega)\in \R^n\times \Omega$ and $v^0\in C^{\infty}_c(\R^n)$. Applying Theorem~\ref{876427463tggfdhgdfgkkjjlmk}, the function 
\begin{equation*}
			v_\varepsilon(t,x,\omega) := e^{ -{\left( i \frac{\lambda(\theta^\ast) t}{\varepsilon^2} + 2i\pi \frac{\theta^\ast \! \cdot x}{\varepsilon} \right)} } u_\varepsilon(t,x,\omega), \;\, (t,x) \in \mathbb{R}^{n+1}_T, \; \omega \in \Omega, 
		\end{equation*}
$\Phi_\omega-$two-scale converges strongly to ${ v(t,x) \, {\Psi}_{\rm per}\Big({T\left( \Phi^{-1}(z,\omega)\right)\omega, \theta^\ast } }\Big)$,
where 
$v \in C([0,T], L^2(\mathbb{R}^n))$ is the unique solution of the homogenized Schr\"odinger equation 
\begin{equation*}
			\left\{
			\begin{array}{c}
				i \displaystyle\frac{\partial v}{\partial t} - {\rm div} {\left( A^\ast \nabla v \right)} + U^\ast v = 0 \, , \;\, \text{em} \;\, \mathbb{R}^{n+1}_T, \\ [7,5pt]
				v(0,x) = v^0(x) \, , \;\, x\in \mathbb{R}^n,
			\end{array}
			\right.
\end{equation*}
with effective matrix  ${ A^\ast = D_\theta^2 \lambda(\theta^\ast) }$ and effective potential 
\begin{equation*}
			U^\ast =  c^{-1}_\psi \int_{[0,1)^m} U_{\rm per}{\left(y \right)}\,  {\left\vert {\Psi}_{\rm per} {\left(y, \theta^\ast \right)} \right\vert}^2 
			|\det \left(B_{\rm per}(y)\right)|\,dy,
		\end{equation*}
		where $$c_\psi = \int_{[0,1)^m} {\left\vert {\Psi}_{\rm per} {\left(y, \theta^\ast \right)} \right\vert}^2 \,|\det \left(B_{\rm per}(y)\right)|\,dy.$$
		
It is worth highlighting that this singular example encompasses the settings considered by Allaire-Piatnitski in~\cite{AllairePiatnitski}. For this, it is enough to take 
$$
   n= m,\,\lambda_j=e_j,\,\Upsilon(\cdot,s)\equiv I_{n \times n}, \; \text{and ${\bf X}(\cdot)\equiv 0$.}
$$ 

\section{\! \! \!Homogenization of Quasi-Perfect Materials} 
\label{6775765ff0090sds}

We consider in this final section an interesting context, which is the small random perturbation of the periodic setting. To begin, we recall that
the homogenization analysis (see Theorem \ref{876427463tggfdhgdfgkkjjlmk}) 
of the equation~\eqref{jhjkhkjhkj765675233} rely on the spectral study of the operator 
$L^{\Phi}(\theta)(\theta\in\R^n)$ posed in the dual space ${ \mathcal{H}^\ast }$ and with domain ­
$D(L^{\Phi}(\theta))=\mathcal{H}$, defined by
\begin{equation}\label{OperL}
\begin{array}{l}
	L^\Phi(\theta)[f] := - {\big({\rm div}_{\! z} + 2i\pi \theta \big)} {\Big[ A {\big( \Phi^{-1} (\cdot, {\cdot\cdot} ), {\cdot\cdot} \big)} {\big( \nabla_{\!\! z} + 2i\pi\theta \big)} f{\big( \Phi^{-1}(\cdot, {\cdot\cdot} ),{\cdot\cdot} \big)} \Big]} \\ [10pt]
	\hspace{4cm} + \, V{\big( \Phi^{-1} (\cdot, {\cdot\cdot} ), {\cdot\cdot} \big)} f{\big( \Phi^{-1}(\cdot, {\cdot\cdot} ), {\cdot\cdot} \big)}, 
\end{array}
\end{equation}
where $\Phi:\R^n\times\Omega\to\R^n$ is a stochastic deformation, $A:\R^n\times\Omega\to\R^{n^2}$ and $V:\R^n\times\Omega\to\R$ are stationary functions. 

\subsection{Perturbed Periodic Case: Spectral Analysis}
\label{PERTUSPECTANALY}

We shall study the spectral properties of the operator ${ L^\Phi(\theta) }$, when the diffeomorphism ${ \Phi }$ 
is a stochastic perturbation of the identity. This concept was introduced in \cite{BlancLeBrisLions2},
and well-developed by T. Andrade, W. Neves, J. Silva \cite{AndradeNevesSilva} for modelling quasi-perfect materials. 

\medskip
Let $(\Omega,\mathcal{F},\mathbb{P})$ be a probability space, 
$\tau:\mathbb{Z}^n\times\Omega\to\Omega$ a discrete 
dynamical system, and $Z$ any fixed stochastic deformation.
%
Then, we consider the concept of stochastic perturbation of the identity given by the following
\begin{definition}
\label{37285gdhddddddddddd}
Given $\eta \in (0,1)$, let $\Phi_\eta: \mathbb{R}^n \times \Omega \to \mathbb{R}^n$ be a stochastic deformation.
Then $\Phi_\eta$ is said a stochastic perturbation of the identity, when 
it can be written as  
\begin{equation}
\label{DefPertIden}
\Phi_\eta(y,\omega) = y + \eta \, Z(y,\omega) + \mathrm{O}(\eta^2), 
\end{equation}
for some stochastic deformation $Z$.  
\end{definition}
We emphasize that the equality~\eqref{DefPertIden} is understood in the 
${\rm Lip}_{\loc}(\R^n; L^2(\Omega))$
sense, i.e. 
for each bounded open subset ${ \mathcal{O} \subset \mathbb{R}^n }$, 
there exist $\delta, C > 0$, such that for all ${ \eta \in (0,\delta) }$
\begin{eqnarray*}
&&\underset{y \in \mathcal{O}}{\rm sup} \, {\left\Vert \Phi_\eta(y,\cdotp) - y - \eta Z(y,\cdotp) \right\Vert}_{L^2(\Omega)}\\
&&\qquad +\,\underset{y \in \mathcal{O}}{\rm ess \, sup} \, {\left\Vert \nabla_{\!\! y} \Phi_\eta(y,\cdotp) - I 
- \eta \, \nabla_{\!\! y}  Z(y,\cdotp) \right\Vert}_{L^2(\Omega)}
\leqslant C \, \eta^2.
\end{eqnarray*}
Moreover, after some computations, we have
\begin{equation}
\label{654367ytr6tfclmlml}
\left \{
\begin{aligned}
	\nabla_y^{-1} \Phi_{\eta}&= I-\eta\,\nabla_y Z+O(\eta^2), 
	\\[5pt]
	\det \big(\nabla_y\Phi_{\eta}\big)&= 1+\eta\, {\rm div}_yZ +O(\eta^2).
\end{aligned}
\right.
\end{equation}

Now, we consider the $[0,1)^n-$periodic functions 
$A_{\rm per}:\R^n\to\R^{n^2},\,V_{\rm per}:\R^n\to\R$ and $U_{\rm per}:\R^n\to\R$, such that 
\begin{itemize}
\item There exists $a_0,a_1>0$ such that for all $\xi\in\R^n$ and for a.e $y\in\R^n$ we have 
$$
a_0|\xi|^2\le A_{\rm per}(y)\xi\cdot \xi\le a_1|\xi|^2.
$$
\item $V_{\rm per},\,U_{\rm per}\in L^{\infty}(\R^n)$.
\end{itemize}
The following lemma is well-known and it is stated explicitly here only for reference. 

\begin{lemma}
\label{7836565etyd43tre56rt3e54redgh}
For $\theta \in \mathbb{R}^n$ and  $f \in H_{\rm per}^1([0,1)^n)$, let ${ L_{\rm per}(\theta) }$ be the operator defined by
\begin{equation}
\label{753e6735827tdygetydr5de4se45se5}
L_{\rm per}(\theta){[f]} := -({\rm div}_{\! y} + 2i\pi \theta) {\big[ A_{\rm per} (y) {(\nabla_{\!\! y} + 2i\pi\theta)} f(y) \big]} + V_{\rm per}(y) f(y),
\end{equation}
with variational formulation 
\begin{equation*}
\begin{array}{c}
\displaystyle {\left\langle L_{\rm per}(\theta){\big[ f \big]}, g \right\rangle} := \int_{[0,1)^n} A_{\rm per}(y) {\left( \nabla_{\!\! y} + 2i\pi \theta \right)} f(y) \cdot \overline{ {\left( \nabla_{\!\! y} + 2i\pi \theta \right)} g(y) } \, dy \\ [10pt]
\displaystyle \hspace{1.7cm} + \int_{[0,1)^n} V_{\rm per}(y) \,  f(y) \, \overline{ g(y) } \, dy, 
\end{array}
\end{equation*}
for ${ f,g \in H_{\rm per}^1({[0,1)^n}) }$. Then ${ L_{\rm per}(\theta) }$ has the following properties:
		\begin{enumerate}
			\item[(i)] There exist ${ \gamma_0, b_0 > 0 }$, such that ${ L_{\gamma_0} := L_{\rm per}(\theta) + {\gamma_0}I }$ satisfies
			 for all $f \in H_{\rm per}^1({[0,1)^n})$, 
			\begin{equation*}
				{\langle L_{\gamma_0} {\big[ f \big]}, f \rangle} \geq b_0 {\Vert f \Vert}_{H_{\rm per}^1({[0,1)^n})}^2.
			\end{equation*}
			\item[(ii)] The point spectrum of ${ L_{\rm per}(\theta) }$ is not empty and their eigenspaces have finite dimension, that is, the set
			\begin{equation*}
				\sigma_{\rm point} {\big( L_{\rm per}(\theta) \big)} = \{ \lambda \in \mathbb{C} \; ; \; \lambda \; \text{an eigenvalue of} \; L_{\rm per}(\theta) \}
			\end{equation*}
			is not empty and for all ${ \lambda \in \sigma_{\rm point} {\big( L_{\rm per}(\theta) \big)} }$ fixed,
			\begin{equation*}
				{\rm dim} {\big\{ f \in H^1_{\rm per}({[0,1)^n}) \; ; \; L_{\rm per}(\theta){\big[ f \big]} = \lambda f \big\}} < \infty.
			\end{equation*}
			
			\item[(iii)] Every point in ${ \sigma_{\rm point}\big( L_{\rm per}(\theta) \big) }$ is isolated. 
		\end{enumerate}
	\end{lemma}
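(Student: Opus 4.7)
The plan is to establish (i) by a direct energy estimate, and then to reduce (ii) and (iii) to the spectral theory of compact self-adjoint operators via a resolvent of $L_{\gamma_0}$.

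First I will prove (i). Expanding the variational form and using the ellipticity hypothesis on $A_{\rm per}$ (which, because $A_{\rm per}$ is real symmetric, gives $A_{\rm per}\xi\cdot\overline{\xi}\geqslant a_0|\xi|^2$ for complex $\xi$), I obtain
$$
\langle L_{\rm per}(\theta)[f],f\rangle\geqslant a_0\,\|(\nabla_{\!\!y}+2i\pi\theta)f\|_{L^2}^2-\|V_{\rm per}\|_\infty\|f\|_{L^2}^2.
$$
Then the elementary estimate $|a+b|^2\geqslant \tfrac{1}{2}|a|^2-|b|^2$, applied pointwise to $\nabla_{\!\!y}f$ and $2i\pi\theta f$, yields
$$
\|(\nabla_{\!\!y}+2i\pi\theta)f\|_{L^2}^2\geqslant \tfrac{1}{2}\|\nabla_{\!\!y} f\|_{L^2}^2-4\pi^2|\theta|^2\|f\|_{L^2}^2.
$$
Choosing $\gamma_0>\tfrac{a_0}{2}+4\pi^2 a_0|\theta|^2+\|V_{\rm per}\|_\infty$ and setting $b_0:=a_0/2$ one immediately gets the coercivity bound claimed in (i).

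Next, by Lax--Milgram the operator $L_{\gamma_0}:H^1_{\rm per}([0,1)^n)\to H^{-1}_{\rm per}([0,1)^n)$ is an isomorphism. Composing the bounded inverse $L_{\gamma_0}^{-1}:H^{-1}_{\rm per}\to H^1_{\rm per}$ with the continuous embedding $L^2([0,1)^n)\hookrightarrow H^{-1}_{\rm per}([0,1)^n)$ on the input side, and with the compact Rellich--Kondrachov embedding $H^1_{\rm per}([0,1)^n)\hookrightarrow\hookrightarrow L^2([0,1)^n)$ on the output side, produces a compact operator $T:=L_{\gamma_0}^{-1}:L^2([0,1)^n)\to L^2([0,1)^n)$. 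Because $A_{\rm per}$ is symmetric and $V_{\rm per}$ is real, the bilinear form associated to $L_{\gamma_0}$ is Hermitian, so $T$ is also self-adjoint on $L^2$; moreover $T$ is positive by the lower bound in (i), hence injective.

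Finally, I invoke the spectral theorem for compact self-adjoint operators: there is an orthonormal basis of $L^2([0,1)^n)$ consisting of eigenfunctions of $T$, with corresponding eigenvalues $\{\mu_k\}\subset(0,\infty)$, each of finite multiplicity, accumulating only at $0$. The correspondence $\mu_k\leftrightarrow \lambda_k:=\mu_k^{-1}-\gamma_0$ transfers the spectral decomposition to $L_{\rm per}(\theta)$: the sequence $\{\lambda_k\}$ is nonempty, each eigenspace is finite dimensional (proving (ii)), and since $\mu_k\to 0$, we have $\lambda_k\to+\infty$, so each $\lambda_k$ is isolated in $\sigma_{\rm point}(L_{\rm per}(\theta))$ (proving (iii)). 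The main subtlety throughout is the correct handling of the shift by $2i\pi\theta$ in the coercivity step; once (i) is in hand, the rest is a routine application of the standard compact self-adjoint machinery.
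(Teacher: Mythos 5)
Your proof is correct, and since the paper itself does not give a proof (it simply cites Evans \cite{Evans1} as a standard reference), your argument is essentially the same approach the cited source would take: coercivity after a shift, Lax--Milgram to invert $L_{\gamma_0}$, composition with the compact embedding $H^1_{\rm per}\hookrightarrow\hookrightarrow L^2$ to obtain a compact positive self-adjoint resolvent $T$, and the spectral theorem plus the transfer $\lambda = \mu^{-1}-\gamma_0$ (noting the eigenvalues of $T$ accumulate only at $0$, so those of $L_{\rm per}(\theta)$ accumulate only at $+\infty$ and are each isolated). The only fine point worth making explicit is that the complex coercivity $A_{\rm per}\xi\cdot\overline{\xi}\geq a_0|\xi|^2$ used at the start relies on $A_{\rm per}$ being real and symmetric so that the cross term $i\left(A_{\rm per}v\cdot u - A_{\rm per}u\cdot v\right)$ vanishes, which you correctly invoke.
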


In what follows, we are interested in the study of spectral properties of the operator ${ L^{\Phi_\eta}(\theta) }$ whose variational formulation is given by 
	\begin{equation}\label{VarFor1}
	\begin{split}
		& {\left\langle L^{\Phi_\eta}(\theta)[f], g \right\rangle} :=  \int_\Omega \int_{\Phi_\eta ([0,1)^n, \omega)} A_{\rm per} {\left( \Phi_\eta^{-1} ( z, \omega) \right)} {\big( \nabla_{\!\! z} + 2i\pi \theta \big)} f{\left( \Phi_\eta^{-1}(z,\omega),\omega \right)} \cdot \\
		& \hspace{7cm} \overline{ {\big( \nabla_{\!\! z} + 2i\pi \theta \big)} g{\left( \Phi_\eta^{-1}(z,\omega),\omega \right)} } \, dz \, d\mathbb{P}(\omega) \\
		& + \int_\Omega \int_{\Phi_\eta ([0,1)^n, \omega)} V_{\rm per} {\left( \Phi_\eta^{-1} ( z, \omega) \right)} f{\left( \Phi_\eta^{-1}(z,\omega),\omega \right)} \, \overline{ g{\left( \Phi_\eta^{-1}(z,\omega),\omega \right)} } \, dz \, d\mathbb{P}(\omega),
	\end{split}
	\end{equation}
	for ${ f,g \in \mathcal{H} }$.  As we shall see in the next theorem, some of the spectral properties of the operator ${ L^{\Phi_\eta}(\theta) }$ are inherited from the periodic case.

\begin{theorem}
\label{4087865567576ghghj}
		Let ${ \Phi_\eta }$, ${ \eta \in (0,1) }$ be a stochastic perturbation of identity and ${ \theta_0 \in \mathbb{R}^n }$. If ${ \lambda_0 }$ is an eigenvalue of ${ L_{\rm per}(\theta_0) }$ with multiplicity ${ k_0 \in \mathbb{N} }$, that is,
		\begin{equation*}
			{\rm dim} {\big\{ f \in H^1_{\rm per}([0,1)^n) \; ; \; L_{\rm per}(\theta_0){\big[ f \big]} = \lambda_0 f \big\}} = k_0,				
		\end{equation*}
		then there exist a neighbourhood ${ \mathcal{U} }$ of ${ (0,\theta_0) }$, ${ k_0 }$ real analytic functions
		\begin{equation*}
			(\eta,\theta) \in \mathcal{U} \; \mapsto \; \lambda_k(\eta,\theta) \in \mathbb{R}, \;\; k\in \{1,\ldots,k_0\},
		\end{equation*}
		and ${ k_0 }$ vector-value analytic maps 
		\begin{equation*}
			(\eta,\theta) \in \mathcal{U} \; \mapsto \; \psi_k(\eta,\theta) \in \mathcal{H} \setminus \{0\}, \;\; k\in \{1,\ldots,k_0\},
		\end{equation*}
		such that, for all ${ k\in\{1,\ldots,k_0\} }$, 
		\begin{itemize}
			\item[(i)] ${ \lambda_k(0,\theta_0) = \lambda_0 }$,
			\item[(ii)] ${ L^{\Phi_\eta}(\theta) {\big[ \psi_k(\eta,\theta) \big]} = \lambda_k(\eta,\theta) \, \psi_k(\eta,\theta) }$, ${ \forall (\eta,\theta) \in \mathcal{U} }$,
			\item[(iii)] ${ {\rm dim}{\big\{ f \in \mathcal{H} \; ; \; L^{\Phi_\eta}(\theta){\big[ f \big]}=\lambda_k(\eta,\theta) f \big\}} \leqslant k_0 }$, ${ \forall (\eta,\theta) \in \mathcal{U} }$.
		\end{itemize}
\end{theorem}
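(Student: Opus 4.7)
The plan is to reduce the spectral problem for $L^{\Phi_\eta}(\theta)$ to a framework where Theorem \ref{768746hughjg576} applies. First I would use the change of variables $z=\Phi_\eta(y,\omega)$ in the variational form \eqref{VarFor1}, which transports the problem from $\mathcal{H}_{\Phi_\eta}$ to the fixed Hilbert space $\mathcal{H}$ via the bi-Lipschitz equivalence recorded in Remark \ref{REMFPHI}. Combining the expansions in \eqref{654367ytr6tfclmlml} with the Neumann series for $[\nabla_y\Phi_\eta]^{-1}$, the pulled-back sesquilinear form $\widetilde{a}_{\eta,\theta}(f,g)$ on $\mathcal{H}$ admits a power series expansion in $(\eta,\theta-\theta_0)\in\mathbb{C}^{n+1}$ convergent in a complex neighbourhood of the origin, and at $(\eta,\theta)=(0,\theta_0)$ it reduces to the form associated with $L_{\rm per}(\theta_0)$ (acting componentwise in $\omega$ on stationary functions).

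Next, I would invoke Lemma \ref{7836565etyd43tre56rt3e54redgh}(i): for $\gamma_0>0$ large and $(\eta,\theta)$ in a small neighbourhood of $(0,\theta_0)$, the shifted form $\widetilde{a}_{\eta,\theta}(\cdot,\cdot)+\gamma_0\langle\cdot,\cdot\rangle_{\mathcal{L}}$ remains coercive on $\mathcal{H}$, so the resolvent $R(\eta,\theta):=(L^{\Phi_\eta}(\theta)+\gamma_0 I)^{-1}$ defines a symmetric bounded operator on $\mathcal{L}$. Analyticity of $R$ as a family of bounded operators would follow from the identity $R(\eta,\theta)=R(0,\theta_0)\bigl[I+B(\eta,\theta)\,R(0,\theta_0)\bigr]^{-1}$, where $B(\eta,\theta)$ is the holomorphic perturbation extracted in Step 1; shrinking the neighbourhood so that $\|B(\eta,\theta)R(0,\theta_0)\|<1$ makes the Neumann inverse holomorphic in $(\eta,\theta-\theta_0)$, placing $R$ squarely in the hypotheses of Theorem \ref{768746hughjg576}.

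The hard part will be verifying that $(\lambda_0+\gamma_0)^{-1}$ is isolated in $\sigma(R(0,\theta_0))$ with finite multiplicity \emph{exactly} $k_0$, since a priori the lifted periodic operator on $\mathcal{H}$ could carry an infinite-dimensional eigenspace, one copy of the periodic eigenspace for each function in $L^2(\Omega)$, obstructing the direct use of Theorem \ref{768746hughjg576}. The key observation is that any $f\in\mathcal{H}$ with $L_{\rm per}(\theta_0)[f(\cdot,\omega)]=\lambda_0\,f(\cdot,\omega)$ for a.e.\ $\omega$ must take the form $f(y,\omega)=\sum_{j=1}^{k_0}c_j(\omega)\,\psi_j(y)$ where $\{\psi_j\}$ is the periodic eigenbasis provided by Lemma \ref{7836565etyd43tre56rt3e54redgh}; stationarity then forces $c_j\circ\tau_k=c_j$ for every $k\in\mathbb{Z}^n$, and ergodicity of $\tau$ makes each $c_j$ constant. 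Hence the $\lambda_0$-eigenspace inside $\mathcal{H}$ collapses to the span of $\{\psi_1,\ldots,\psi_{k_0}\}$ (regarded as $\omega$-independent stationary functions), and isolation is inherited from Lemma \ref{7836565etyd43tre56rt3e54redgh}(iii).

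With these preparations, Theorem \ref{768746hughjg576} delivers $k_0$ holomorphic eigenvalue branches $\mu_k(\eta,\theta)$ and eigenvector branches $\varphi_k(\eta,\theta)$ for $R(\eta,\theta)$, with $\mu_k(0,\theta_0)=(\lambda_0+\gamma_0)^{-1}$ and the dimension bound at most $k_0$. Setting $\lambda_k(\eta,\theta):=\mu_k(\eta,\theta)^{-1}-\gamma_0$ and $\psi_k(\eta,\theta):=\varphi_k(\eta,\theta)$ produces the desired branches; the inclusion $\psi_k(\eta,\theta)\in\mathcal{H}\setminus\{0\}$ is automatic because the resolvent maps $\mathcal{L}$ into $\mathcal{H}$, and reality of $\lambda_k$ on the real slice follows from the symmetry of the operator together with the fact that the power series coefficients are themselves symmetric. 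Properties (i)--(iii) of the statement then translate directly from the corresponding conclusions for $R$ in Theorem \ref{768746hughjg576}.
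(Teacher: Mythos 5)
Your proposal follows the same strategy as the paper's proof: expand $L^{\Phi_\eta}(\theta)$ in a power series around $(0,\theta_0)$ using the expansions \eqref{654367ytr6tfclmlml}; shift by $\gamma_0 I$ using the coercivity from Lemma \ref{7836565etyd43tre56rt3e54redgh}(i); form the resolvent $S(\eta,\theta)=(L^{\Phi_\eta}(\theta)+\gamma_0 I)^{-1}$; establish holomorphy (the paper uses separate-variable holomorphy plus Osgood's Lemma, you use a Neumann-series identity, but these are interchangeable); restrict to $\mathcal{L}$ and apply Theorem \ref{768746hughjg576}; then invert the spectral map $\lambda\mapsto(\lambda+\gamma_0)^{-1}$. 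That part is essentially the paper's argument.

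The one place where you add content is the third paragraph, and there your reasoning has a gap. You are right to worry that the $\lambda_0$-eigenspace in $\mathcal{H}$ could be larger than $k_0$; the paper simply asserts, citing the remark after Lemma \ref{7836565etyd43tre56rt3e54redgh}, that the multiplicity is $k_0$ and that $\mu_0$ is isolated. Your fix begins with the claim that any $f\in\mathcal{H}$ satisfying $L_{\rm per}(\theta_0)[f(\cdot,\omega)]=\lambda_0 f(\cdot,\omega)$ a.s.\ must have the form $f(y,\omega)=\sum_{j=1}^{k_0}c_j(\omega)\psi_j(y)$ with $\psi_j$ the periodic eigenfunctions. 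This is not immediate. Via Theorem \ref{987987789879879879}, the eigenvalue equation in $\mathcal{H}$ translates into the distributional equation for the realization $f(\cdot,\omega)$ on all of $\mathbb{R}^n$, and $f(\cdot,\omega)$ is not periodic; it only satisfies $f(y+k,\omega)=f(y,\tau(k)\omega)$. The solution space of $(L_{\rm per}(\theta_0)-\lambda_0)u=0$ in $H^1_{\loc}(\mathbb{R}^n)$ is generally much larger than ${\rm span}\{\psi_1,\dots,\psi_{k_0}\}$ (already for $-\Delta u=0$ it is the space of all harmonic functions), so one cannot simply read off the $\sum c_j(\omega)\psi_j$ form. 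What does work is the closing half of your argument: once one shows that a stationary solution decomposes in the periodic eigenbasis with $\tau$-invariant coefficients, ergodicity forces those coefficients to be constant. But establishing that decomposition requires more: for instance, expanding $f(\cdot,\omega)\big|_{[0,1)^n}$ in the $L^2([0,1)^n)$-eigenbasis of $L_{\rm per}(\theta_0)$, noting that the variational form of the eigenvalue equation (tested against suitable stationary test functions) kills all coefficients of eigenvectors with $\lambda_m\neq\lambda_0$, and then carefully tracking how the twisted boundary condition coming from stationarity interacts with the decomposition. In short, your instinct about what the missing justification should look like is the right one, and goes further than the paper, but as written the argument assumes the conclusion of the decomposition step rather than proving it.
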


\begin{proof}
1. The aim of this step is to rewrite the operator ${ L^{\Phi_\eta}(\theta) \in \mathcal{B}(\mathcal{H},\mathcal{H}^\ast) }$, for ${ \eta\in (0,1) }$ and ${ \theta\in\mathbb{R}^n }$ 
as an expansion in the variable ${ (\eta,\theta) }$ of operators in ${ \mathcal{B}(\mathcal{H},\mathcal{H}^\ast) }$ around the point ${ (\eta,\theta)=(0,\theta_0) }$. For this, 
using the variational formulation~\eqref{VarFor1}, a change of variables, and the expansions \eqref{654367ytr6tfclmlml} we obtain 		
	\begin{equation}\label{iy87678yhghj354g}
		L^{\Phi_\eta}(\theta) = L_{\rm per}(\theta_0) + \sum_{{\vert (\alpha,\beta) \vert} = 1}^{3} ((\eta,\theta)-(0,\theta_0))^{(\alpha,\beta)}L_{(\alpha,\beta)} + \mathrm{O}(\eta^2),
	\end{equation}
	in ${ \mathcal{B}(\mathcal{H},\mathcal{H}^\ast) }$ as ${ \eta \to 0 }$, where ${ L_{(\alpha,\beta)} \in \mathcal{B}(\mathcal{H},\mathcal{H}^\ast) }$ and ${ {\vert (\alpha,\beta) \vert} = \alpha + \sum_{k=1}^n \beta_k }$.
Here, for ${ (\alpha,\beta) \in \mathbb{N} \times \mathbb{N}^n }$ and ${ \beta=(\beta_1,\ldots,\beta_n) }$, 
we are using the multi-index notation ${ ((\eta, \theta)-(0,\theta_0))^{(\alpha,\beta)}= \eta^\alpha \prod_{k=1}^n (\theta_k-\theta_{0k})^{\beta_k} }$.				
Clearly, we can consider the parameters ${ (\eta,\theta) }$ in the set $B(0,1) \times \mathbb{C}^n$.

2.  In this step, we shall modify the expansion \eqref{iy87678yhghj354g} conveniently in order to obtain an holomorphic invertible operator in the variable ${ (\eta,\theta) }$.  For this, 
remember that according to the item ${ (i) }$ in Lemma \ref{7836565etyd43tre56rt3e54redgh}, there exists $\gamma_0>0$ such that the operator ${ L_{\rm per}(\theta_0) + {\gamma_0} I }$ is invertible. Then there exists ${ \delta>0 }$ such that the expansion
\begin{equation}\label{87tyrtdfdcccdasxzsaxzsa}
			L^{\Phi_\eta}(\theta) + {\gamma_0} I= 
			L_{\rm per}(\theta_0) + {\gamma_0} I + \!\!\!\sum_{{\vert (\alpha,\beta) \vert}= 1}^{3} ((\eta,\theta)-(0,\theta_0))^{(\alpha,\beta)}L_{(\alpha,\beta)}+ \mathrm{O}(\eta^2)
\end{equation}
in ${ \mathcal{B}(\mathcal{H},\mathcal{H}^\ast) }$ as ${ \eta \to 0 }$, is invertible for all ${ (\eta,\theta) \in B(0,\delta) \times B(\theta_0,\delta) }$, since the set of invertible bounded operators ${ GL(\mathcal{H},\mathcal{H}^\ast) }$ is an open subset of ${ \mathcal{B}(\mathcal{H},\mathcal{H}^\ast) }$. Now, we denote by ${ S(\eta,\theta) }$ the inverse operator of ${ L^{\Phi_\eta}(\theta) + {\gamma_0} I }$, ${ (\eta, \theta) \in B(0,\delta) \times B(\theta_0,\delta) }$. Since the map ${ L \in GL(\mathcal{H},\mathcal{H}^\ast) \mapsto L^{-1} \in \mathcal{B}(\mathcal{H}^\ast,\mathcal{H}) }$ is continuous, the map
		\begin{equation*}
			(\eta, \theta) \in B(0,\delta) \times B(\theta_0,\delta) \mapsto S(\eta,\theta) \in \mathcal{B}(\mathcal{H}^\ast, \mathcal{H})
		\end{equation*}
		is continuous. As a consequence of this, for ${ (\widetilde{\eta}, \widetilde{\theta}) \in B(0,\delta) \times B(\theta_0,\delta) }$ fixed, the limit of
		\begin{equation*}
			\frac{S(\eta, \widetilde{\theta}) - S(\widetilde{\eta}, \widetilde{\theta})}{\eta - \widetilde{\eta}} = -S(\eta, \widetilde{\theta}) {\left[ \frac{(L^{\Phi_\eta}(\widetilde{\theta}) + {\gamma_0} I) - (L^{\Phi_{\widetilde{\eta}}}(\widetilde{\theta}) + {\gamma_0} I)}{\eta - \widetilde{\eta}} \right]} S(\widetilde{\eta}, \widetilde{\theta}),
		\end{equation*}
		as ${ \widetilde{\eta} \not= \eta \to 0 }$, exists. Thus, ${ \eta \in B(0,\delta) \mapsto S(\eta,\widetilde{\theta}) }$ is an holomorphic map. In analogy with it, for ${ j\in{\{1,\ldots,n\}} }$, we can prove that
		\begin{equation*}
			\theta_j \mapsto S(\widetilde{\eta}, \widetilde{\theta}_1, \ldots, \widetilde{\theta}_{j-1}, \theta_j, \widetilde{\theta}_{j+1}, \ldots, \widetilde{\theta}_n) 
		\end{equation*}
		is an holomorphic map. Therefore, by Osgood's Lemma, see for instance \cite{GunningRossi}, we conclude that
		\begin{equation}\label{9867967689ndyfh}
			(\eta, \theta) \in B(0,\delta) \times B(\theta_0,\delta) \mapsto S(\eta,\theta) \in \mathcal{B}(\mathcal{H}^\ast, \mathcal{H})
		\end{equation}
		is a holomorphic function.
		
3.  Finally, we are in conditions to prove items ${ (i) }$, ${ (ii) }$ and ${ (iii) }$ (the spectral analysis of the operator ${ S(\eta, \theta) }$). First, we shall note that for 
${ (\eta,\theta) }$ in a neighbourhood of ${ (0,\theta_0) }$, the map ${ (\eta, \theta) \mapsto S(\eta, \theta) }$ satisfies the assumptions of the Theorem \ref{768746hughjg576}. 
We begin recalling that the restriction operator ${ T \in \mathcal{B}(\mathcal{H}^\ast, \mathcal{H}) \mapsto T\big\vert_\mathcal{L} \in \mathcal{B}(\mathcal{L}, \mathcal{L}) }$ is continuous and it satisfies
		\begin{equation}\label{78687326tygd53tegdcx}
			{\Vert T \Vert}_{\mathcal{B}(\mathcal{L}, \mathcal{L})} \leqslant {\Vert T \Vert}_{\mathcal{B}(\mathcal{H}^\ast, \mathcal{H})} \, , \;\, \forall T \in \mathcal{B}(\mathcal{H}^\ast, \mathcal{H}).
		\end{equation}
		Then, by \eqref{9867967689ndyfh}, the map ${ (\eta, \theta) \in B(0,\delta) \times B(\theta_0,\delta) \mapsto S(\eta, \theta) \in \mathcal{B}(\mathcal{L}, \mathcal{L}) }$ is holomorphic. Since holomorphic maps are, locally, analytic maps there exists a neighbourhood ${ \mathcal{U} }$ of ${ (0,\theta_0) }$, ${ (0, \theta_0) \in \mathcal{U} \subset \mathbb{C} \times \mathbb{C}^n }$, and a family ${ \{S_{\sigma}\}_{\sigma \in \mathbb{N} \times \mathbb{N}^n} }$ contained in ${ \mathcal{B}(\mathcal{L}, \mathcal{L}) }$, such that
		\begin{equation}\label{rtfgrffcfdfdfdfdssdadssss}
			S(\eta, \theta) = S_{0} + \sum_{\substack{\sigma \in \mathbb{N} \times \mathbb{N}^n \\ {\vert \sigma \vert} \neq 0}} (\eta, \theta)^\sigma S_\sigma \, , \;\, \forall (\eta, \theta) \in \mathcal{U}.
		\end{equation}
		
			Using \eqref{87tyrtdfdcccdasxzsaxzsa} and \eqref{rtfgrffcfdfdfdfdssdadssss}, it is easy to see that 
${ S_0 = (L_{\rm per}(\theta_0) + {\gamma_0} I)^{-1} \big\vert_\mathcal{L}}$.  Notice also that ${ \mu_0 := {\left( \lambda_0+\gamma_0 \right)}^{-1} }$ is an eigenvalue of ${ S_0 }$ if and only if ${ \lambda_0 }$ is an eigenvalue of ${ L_{\rm per}(\theta_0) }$ that is
		\begin{equation*}
			g \in {\{ f \in \mathcal{L} \; ; \; S_0 {\big[ f \big]} = \mu_0 f \}} \; \Leftrightarrow \; g \in {\{ f \in \mathcal{L} \; ; \; L_{\rm per}(\theta_0) {\big[ f \big]} =\lambda_0 f \}}.
		\end{equation*}
	
\medskip
		
		The final part of the proof is a direct application of the Theorem~\ref{768746hughjg576}. Due to our assumption, $\mu_0$ is a real eigenvalue of the operator $S_0$ with 
multiplicity $k_0$. Hence, by the Theorem~\ref{768746hughjg576},  there exists a neighbourhood ${ \widetilde{\mathcal{U}} }$ of ${ (0,\theta_0) }$, with ${ \widetilde{\mathcal{U}} \subset \mathcal{U} }$ and analytic maps
		\begin{equation*}
		\begin{array}{l}
			(\eta, \theta) \in \widetilde{\mathcal{U}} \; \longmapsto \; \mu_{0 1}(\eta,\theta), \mu_{0 2}(\eta,\theta), \ldots, \mu_{0 k_0}(\eta, \theta) \in (0,\infty), \\ [5pt]
			(\eta, \theta) \in \widetilde{\mathcal{U}} \; \longmapsto \; \psi_{0 1}(\eta,\theta), \psi_{0 2}(\eta,\theta), \ldots, \psi_{0 k_0}(\eta,\theta) \in \mathcal{L}-\{0\},
		\end{array}
		\end{equation*}
		such that
		\begin{itemize}
			\item ${ \mu_{0 \ell} (0,\theta_0) = \mu_0 }$, 
			\item ${ S(\eta, \theta) {\big[ \psi_{0 \ell}(\eta, \theta) \big]} = \mu_{0 \ell}(\eta, \theta) \psi_{0 \ell}(\eta, \theta) }$, \; ${ \forall (\eta, \theta) \in \widetilde{\mathcal{U}} }$,
			\item ${ {\rm dim}{\{ f \in \mathcal{L} \; ; \; S(\eta,\theta){\big[ f \big]} = \mu_{0 \ell}(\eta,\theta) f \}}\leqslant k_0 }$, \; ${ \forall (\eta, \theta) \in \widetilde{\mathcal{U}} }$, 
		\end{itemize}		
		for all ${ \ell \in \{1, \ldots, k_0\} }$. Thus, the proof of the item ${ (i) }$ is clear.

		Using the second equality above, we obtain 
		\begin{eqnarray*}
			(L^{\Phi_\eta}(\theta) + {\gamma_0} I) {\big[ \psi_{0 \ell}(\eta, \theta) \big]} & = & \frac{1}{\mu_{0 \ell}(\eta, \theta)} (L^{\Phi_\eta}(\theta) + {\gamma_0} I){\left\{ S(\eta, \theta) {\big[ \psi_{0 \ell}(\eta, \theta) \big]} \right\}} \\ [5pt]
			& = & \frac{1}{\mu_{0 \ell}(\eta, \theta)} \psi_{0 \ell}(\eta, \theta), 
		\end{eqnarray*}
which implies that ${ L^{\Phi_\eta}(\theta) {\big[ \psi_{0 \ell}(\eta, \theta) \big]} = \lambda_{0\ell}(\eta,\theta) \psi_{0 \ell}(\eta, \theta) }$, for ${ (\eta, \theta) \in \widetilde{\mathcal{U}} }$,  $\ell\in \{1, \ldots, m_0\}$ 
and $\lambda_{0 \ell}(\eta,\theta) := [\mu_{0 \ell}(\eta, \theta)]^{-1} - {\gamma_0}$. This finish the proof of the item $(ii)$.
		
\medskip 

		Finally, note that ${ S(\eta,\theta) \big[ \mathcal{L} \big] \subset \mathcal{H}}$ and 
		\begin{equation*}
			g \! \in \! \big\{ f \in \mathcal{H} ; S(\eta,\theta) {\big[ f \big]} \! = \! \mu_{0\ell}(\eta,\theta) f \big\} \Leftrightarrow g \! \in \! \big\{ f \in \mathcal{H} \; ; \; L^{\Phi_\eta}(\theta) {\big[ f \big]} \! = \! \lambda_{0\ell}(\eta,\theta) f \big\},
		\end{equation*}
		which concludes the proof of the item ${ (iii) }$. Hence the proof is completed. 
\end{proof}

\subsection{Homogenization Analysis of the Perturbed Model}

In this section, we shall investigate in which way the stochastic perturbation of 
the identity characterize the form of the coefficients, during the 
asymptotic limit of the Schr\"odinger equation 
	\begin{multline}\label{765tdyyuty67tsss}
		\left\{
		\begin{array}{l}
			i\displaystyle\frac{\partial u_{\eta\varepsilon}}{\partial t} - {\rm div} {\bigg( A_{\rm per} {\left( \Phi_\eta^{-1} {\left( \frac{x}{\varepsilon}, \omega \right)} \right)} \nabla u_{\eta\varepsilon} \bigg)} \\ [14pt]
			+ {\bigg( \displaystyle\frac{1}{\varepsilon^2} V_{\rm per} {\left( \Phi_\eta^{-1} {\left( \displaystyle\frac{x}{\varepsilon}, \omega \right)} \right)} +  U_{\rm per} {\left( \Phi_\eta^{-1} {\left( \displaystyle\frac{x}{\varepsilon}, \omega \right)} \right)} \bigg)} u_{\eta\varepsilon} = 0 \quad \text{in} \;\, \mathbb{R}^{n+1}_T \! \times \! \Omega, \\ [14pt]
			u_{\eta\varepsilon} (0,x,\omega)=u_{\eta\varepsilon}^0(x,\omega), \;  \; (x,\omega) \in \mathbb{R}^n \! \times \! \Omega,
		\end{array}
		\right.
	\end{multline}
where ${ 0 < T < \infty }$, ${ \mathbb{R}^{n+1}_T = (0,T) \times \mathbb{R}^n }$.  The coefficients are accomplishing of the periodic functions ${ A_{\rm per}(y) }$, ${ V_{\rm per}(y) }$, ${ U_{\rm per}(y) }$ (as defined in the last subsection) with a stochastic perturbation of identity ${ \Phi_\eta }$, ${ \eta \in (0,1) }$, presenting an rate of oscillation ${ \varepsilon^{-1} }$, ${ \varepsilon>0 }$.  
The function ${ u_{\eta\varepsilon}^0(x,\omega) }$ is a well prepared initial data (see~\eqref{well-prep.I}) and this well-preparedness is triggered by natural periodic conditions
on the existence of a pair 
${ \big( \theta^\ast, \lambda_{\rm per}(\theta^\ast) \big) \in \mathbb{R}^n \times \mathbb{R} }$ such that 
	\begin{equation}\label{7t8drtys65edsrt3xcvvcxcvb}
		\begin{split}
			(i) & \;\;\, \lambda_{\rm per}(\theta^\ast) \; \text{is a simple eigenvalue of} \; L_{\rm per}(\theta^\ast), \\
			(ii) & \;\;\, \theta^\ast \; \text{is a critical point of} \; \lambda_{\rm per}(\cdot), \, \text{that is}, \nabla_{\!\! \theta} \lambda_{\rm per}(\theta^\ast)=0.
		\end{split}
	\end{equation}

	By the condition ${ (i) }$ and the Theorem \ref{4087865567576ghghj}, there exists a neighborhood ${ \mathcal{U} }$ of ${ (0,\theta^{\ast}) }$ and the analytic maps
	\begin{equation}\label{67ty3uhrjefd67tgrefdcx8ur7u}
		\begin{split}
			(i) & \;\;\, (\eta,\theta) \in \mathcal{U} \; \mapsto \; \lambda(\eta,\theta) \in \mathbb{R}, \\
			(ii) & \;\;\, (\eta,\theta) \in \mathcal{U} \; \mapsto \; \psi(\eta,\theta) \in \mathcal{H}\setminus\{0\}, 
		\end{split}
	\end{equation}
	such that ${ \lambda(0,\theta^\ast) = \lambda_{\rm per}(\theta^\ast) }$, ${ L^{\Phi_\eta}(\theta) \big[ \psi(\eta,\theta) \big] = \lambda(\eta,\theta) \, \psi(\eta,\theta) }$ and
	\begin{equation*}
		{\rm dim} \big\{ f \in \mathcal{H} \; ; \; L^{\Phi_\eta} (\theta) = \lambda(\eta,\theta) \, f \big\} = 1, \; \forall (\eta,\theta) \in \mathcal{U}.
	\end{equation*}
	
		Thus,
	\begin{equation}\label{7rter44}
		\lambda(\eta,\theta) \; \text{is a simple eigenvalue of} \; L^{\Phi_\eta}(\theta),  \forall (\eta,\theta) \in \mathcal{U}.
	\end{equation}

Additionally, as ${ \lambda(0,\theta^\ast)=\lambda_{\rm per}(\theta^\ast) }$ is an isolated point of ${ \sigma_{\rm point} \big(L_{\rm per}(\theta^\ast) \big) }$ (any point has this property	), ${ \lambda(\eta,\theta) }$ is an isolated point of ${ \sigma_{\rm point} \big(L^{\Phi_\eta}(\theta^\ast) \big) }$ for each ${ (\eta,\theta) \in \mathcal{U} }$. Thus, we have 
${ \lambda(0,\cdot) = \lambda_{\rm per}(\cdot) }$ in a neighbourhood of ${ \theta^\ast }$. We now denote ${ \psi_{\rm per}(\cdot) := \psi(0,\cdot) }$. Without loss of generality, we assume ${ \int_{[0,1)^n} {\vert \psi_{\rm per}(\theta^\ast) \vert}^2 dy = 1 }$. Moreover, we shall assume that the homogenized (periodic) matrix 
${ A_{\rm per}^\ast = D_{\! \theta}^2 \lambda_{\rm per}(\theta^\ast) }$ is invertible which happens if $\theta=\theta^{\ast}$ is a point of local minimum or local maximum strict of 
$\R^n\ni \theta\mapsto \lambda_{\rm per}(\theta)$. Thus, an immediate application of the Implicit Function Theorem gives us the following lemma:

\begin{lemma}\label{6487369847639gfhdghjdftrtrtfgcbvbv}
		Let the condition \eqref{7t8drtys65edsrt3xcvvcxcvb} be satisfied and ${ A_{\rm per}^\ast}$ be an invertible matrix. Then, there exists a neighborhood ${ \mathcal{V} }$ of ${ 0 }$, ${ 0 \in \mathcal{V} \subset \mathbb{R} }$, and a ${ \mathbb{R}^n }$-value analytic map
		\begin{equation*}
			\theta (\cdot) : \eta \in \mathcal{V} \mapsto \theta(\eta) \in \mathbb{R}^n, 
		\end{equation*}
		such that ${ \theta(0)=\theta^\ast }$ and
		\begin{equation}\label{67trsdasdsoktig}
			\nabla_{\!\! \theta} \lambda \big( \eta,\theta(\eta) \big) = 0, \;\; \forall \eta \in \mathcal{V}.
		\end{equation}
\end{lemma}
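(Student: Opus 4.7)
The plan is to apply the (real-)analytic Implicit Function Theorem to the map
$$
F : \mathcal{U} \subset \mathbb{R} \times \mathbb{R}^n \to \mathbb{R}^n, \qquad F(\eta,\theta) := \nabla_{\!\theta}\, \lambda(\eta,\theta),
$$
where $\lambda(\eta,\theta)$ is the analytic perturbed eigenvalue supplied by \eqref{67ty3uhrjefd67tgrefdcx8ur7u}(i). Since $\lambda$ is analytic on $\mathcal{U}$, so is $F$, and it therefore makes sense to look for an analytic branch of zeros of $F$ starting at $(0,\theta^\ast)$.

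The first thing I would verify is the base condition $F(0,\theta^\ast)=0$. By the paragraph following \eqref{7rter44}, the simplicity of $\lambda(0,\theta^\ast)=\lambda_{\rm per}(\theta^\ast)$ and the fact that $\lambda_{\rm per}(\theta)$ is a (simple, isolated) eigenvalue of $L_{\rm per}(\theta)$ for $\theta$ near $\theta^\ast$ force $\lambda(0,\cdot) \equiv \lambda_{\rm per}(\cdot)$ on a neighborhood of $\theta^\ast$. Combined with hypothesis \eqref{7t8drtys65edsrt3xcvvcxcvb}(ii), namely $\nabla_{\!\theta}\lambda_{\rm per}(\theta^\ast)=0$, this gives $F(0,\theta^\ast)= \nabla_{\!\theta}\lambda(0,\theta^\ast)=0$, as required.

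Next I would compute the partial Jacobian $D_{\!\theta} F(0,\theta^\ast)$. Since $\lambda(0,\cdot)$ coincides with $\lambda_{\rm per}(\cdot)$ near $\theta^\ast$, we may differentiate componentwise to obtain
$$
D_{\!\theta} F(0,\theta^\ast) \;=\; D^2_{\!\theta}\, \lambda(0,\theta^\ast) \;=\; D^2_{\!\theta}\, \lambda_{\rm per}(\theta^\ast) \;=\; A^\ast_{\rm per}.
$$
By hypothesis $A^\ast_{\rm per}$ is invertible, so the linearization of $F$ in the $\theta$-variable at $(0,\theta^\ast)$ is an isomorphism of $\mathbb{R}^n$.

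The hypotheses of the analytic Implicit Function Theorem (see e.g.\ Krantz--Parks \cite{KrantzParks}) are therefore met, and it yields a neighborhood $\mathcal{V}\subset\mathbb{R}$ of $0$ together with a unique real-analytic map $\eta\in\mathcal{V} \mapsto \theta(\eta)\in\mathbb{R}^n$ satisfying $\theta(0)=\theta^\ast$ and
$$
F\bigl(\eta,\theta(\eta)\bigr) \;=\; \nabla_{\!\theta}\,\lambda\bigl(\eta,\theta(\eta)\bigr) \;=\; 0 \qquad \text{for all } \eta\in\mathcal{V},
$$
which is exactly \eqref{67trsdasdsoktig}. There is no serious obstacle here: the entire content of the lemma is the invertibility of the Hessian $A^\ast_{\rm per}$, which has been assumed outright to ensure the implicit function theorem applies. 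The only subtlety worth being careful about is the preliminary identification $\lambda(0,\cdot)=\lambda_{\rm per}(\cdot)$ near $\theta^\ast$, which uses simplicity and the isolated-spectrum property inherited from Lemma \ref{7836565etyd43tre56rt3e54redgh}(iii) via Theorem \ref{4087865567576ghghj}.
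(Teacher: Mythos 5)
Your proposal is correct and coincides with the paper's intended argument: the paper introduces the lemma with the phrase ``an immediate application of the Implicit Function Theorem,'' and your reconstruction—checking $\nabla_{\!\theta}\lambda(0,\theta^\ast)=\nabla_{\!\theta}\lambda_{\rm per}(\theta^\ast)=0$ via the identification $\lambda(0,\cdot)=\lambda_{\rm per}(\cdot)$ established just above the lemma, then noting $D_{\!\theta}F(0,\theta^\ast)=D^2_{\!\theta}\lambda_{\rm per}(\theta^\ast)=A^\ast_{\rm per}$ is invertible—is exactly the intended justification. The emphasis you place on the preliminary identification $\lambda(0,\cdot)\equiv\lambda_{\rm per}(\cdot)$ near $\theta^\ast$ is appropriate, since that is the only non-routine step.
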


By the analytic structure of the functions in \eqref{67ty3uhrjefd67tgrefdcx8ur7u} and the Lemma \ref{6487369847639gfhdghjdftrtrtfgcbvbv}, there exists a neighborhood ${ \mathcal{V} }$ of ${ 0 }$, ${ 0 \in \mathcal{V} \subset \mathbb{R} }$, such that 
	\begin{equation}\label{563gdc}
		\begin{split}
			(i) & \;\;\, \eta \in \mathcal{V} \; \mapsto \; \lambda \big( \eta, \theta(\eta) \big) \in \mathbb{R}, \\
			(ii) & \;\;\, \eta \in \mathcal{V} \; \mapsto \; \psi \big( \eta, \theta(\eta) \big) \in \mathcal{H} \setminus \{0\}, \\
			(iii) & \;\;\, \eta \in \mathcal{V} \; \mapsto \; \xi_k \big( \eta, \theta(\eta) \big) \in \mathcal{H}, \forall \{1,\ldots,n\},
		\end{split}
	\end{equation}
	are analytic functions, where ${ \xi_k(\eta,\theta) := (2i \pi)^{-1}{\partial_{\theta_k} \psi} (\eta,\theta) }$, for ${ k\in\{1,\ldots,n\} }$. We also consider ${ \xi_{k,{\rm per}}(\cdot) = \xi_k(0,\cdot) }$. Furthermore, by \eqref{7rter44} and \eqref{67trsdasdsoktig}, for each fixed ${ \eta \in \mathcal{V} }$ we have that the pair ${ \big( \theta(\eta),\lambda \big( \eta, \theta(\eta) \big) \big) \in \mathbb{R}^n \times \mathbb{R} }$ satisfies: 
	\begin{equation}\label{674tyghd}
		\begin{split}
			(i) & \;\;\, \lambda(\eta,\theta(\eta)) \; \text{is a simple eigenvalue of} \; L^{\Phi_\eta}\big( \theta(\eta) \big), \\
			(ii) & \;\;\, \theta(\eta) \; \text{is a critical point of} \; \lambda(\eta,\cdot), \, \text{that is}, \nabla_{\!\! \theta} \lambda(\eta, \theta(\eta)) = 0.
		\end{split}
	\end{equation}
	This means that the Theorem \ref{876427463tggfdhgdfgkkjjlmk} can be used. Before, we establish a much simplified notations for the functions in \eqref{563gdc} as follows:
	\begin{equation}\label{798723678rtyd5rdftrgdfdfsdssss}
		\begin{split}
			(i) & \;\;\, \theta_\eta := \theta(\eta), \\
			(ii) & \;\;\, \lambda_\eta := \lambda \big( \eta,\theta(\eta) \big), \\
			(iii) & \;\;\, \psi_\eta := \psi \big( \eta,\theta(\eta) \big), \\
			(iv) & \;\;\, \xi_{k,\eta} := \xi_k \big( \eta,\theta(\eta) \big), \, k\in\{1,\ldots,n\}.
		\end{split}
	\end{equation}

	Finally, from \eqref{674tyghd}, for each fixed ${ \eta \in \mathcal{V} }$, the notion of well-preparedness for the initial data $u_{\eta\varepsilon}^0$ is given as below. 
	
	\begin{equation}\label{well-prep.I}
		u_{\eta\varepsilon}^0(x,\omega) = e^{2i\pi \frac{\theta_\eta \cdot x}{\varepsilon}} \,  v^0(x) \, \psi_\eta {\left( \Phi_{\eta}^{-1} {\left( \frac{x}{\varepsilon}, \omega \right)}, \omega \right)}, \; (x,\omega) \in \mathbb{R}^n \times \Omega,
	\end{equation}
where ${ v^0 \in C_{\rm c}^\infty(\mathbb{R}^n) }$. Thus, applying the Theorem \ref{876427463tggfdhgdfgkkjjlmk}, if ${ u_{\eta\varepsilon} }$ is solution of \eqref{765tdyyuty67tsss}, the sequence in ${ \varepsilon>0 }$
	\begin{equation*}
		v_{\eta\varepsilon}(t,x,\widetilde{\omega}) = e^{ -{\left( i \frac{\lambda_\eta t}{\varepsilon^2} + 2i\pi \frac{\theta_\eta \cdot x}{\varepsilon} \right)} } u_{\eta\varepsilon}(t,x,\widetilde{\omega}), \;\, (t,x,\widetilde{\omega}) \in \mathbb{R}^{n+1}_T \times \Omega, 
	\end{equation*}
	$\Phi_{\omega}-$two-scale converges to the limit ${ v_{\eta}(t,x) \, \psi_{\eta}{\big( \Phi^{-1}(z,\omega),\omega \big)} }$ with
	\begin{equation*}
		\lim_{\varepsilon \to 0} \iint_{\mathbb{R}^{n+1}_T} \! {\left\vert v_{\eta \varepsilon} (t,x,\widetilde{\omega}) - v_{\eta}(t,x) \, \psi_{\eta}{\left( \Phi^{-1}_{\eta} {\left(\frac{x}{\varepsilon},\widetilde{\omega} \right)}, \widetilde{\omega} \right)} \right\vert}^2 dx \, dt \, = \, 0,
	\end{equation*}
for a.e. ${ \widetilde{\omega} \in \Omega }$, where ${ v_{\eta} \in C \big( [0,T], L^2(\mathbb{R}^n) \big) }$ is the unique solution of the homogenized Schr\"odinger equation 
	\begin{equation}\label{askdjfhucomojkfdfd}
		\left\{
		\begin{array}{c}
			i \displaystyle\frac{\partial v_\eta}{\partial t} - {\rm div} {\left( A^\ast_{\eta} \nabla v_\eta \right)} + U_{\! \eta}^\ast v_\eta = 0 \, , \;\, \text{in} \;\, \mathbb{R}^{n+1}_T, \\ [7,5pt]
			v_\eta(0,x) = v^0(x) \, , \;\, x\in \mathbb{R}^n,
		\end{array}
		\right.
	\end{equation}
with effective coefficients ${ A^\ast_{\eta} = D_{\! \theta}^2 \lambda \big( \eta,\theta(\eta) \big) }$ and 
	\begin{equation}\label{783874tgffffg}
		U^\ast_{\! \eta} =  c^{-1}_{\eta} \int_\Omega \int_{\Phi_{\eta}([0,1)^n, \omega)} U_{\rm per}{\big( \Phi^{-1}_{\eta} (z, \omega) \big)}  {\left\vert \psi_{\eta} {\big( \Phi^{-1}_{\eta} (z,\omega), \omega \big)} \right\vert}^2 dz \, d\mathbb{P}(\omega),
	\end{equation}
	where
	\begin{equation}\label{874326984yghedf}
		c_{\eta} = \int_\Omega \int_{\Phi_{\eta}([0,1)^n, \omega)} {\left\vert \psi_{\eta} {\big( \Phi^{-1}_{\eta} (z,\omega), \omega \big)} \right\vert}^2 dz \, d\mathbb{P}(\omega).
	\end{equation}
	
\begin{remark} 
		We remember that, using the equality~\eqref{7358586tygfjdshfbvvcc}, we have for each ${ \eta }$ fixed that the matrix ${ B_\eta \in \mathbb{R}^{n \times n} }$ must satisfy for ${ k,\ell \in \{1,\ldots,n\} }$
		\begin{equation}\label{786587tdyghs7rsdfxsdfsdf}
		\begin{split}
			& (B_\eta)_{k\ell} := c_\eta^{-1} \bigg[ \int_\Omega\int_{\Phi_\eta([0,1)^n,\omega)} A_{\rm per}{\left( \Phi_\eta^{-1}(z,\omega) \right)} {\left( e_\ell \, \psi_\eta{\left( \Phi_\eta^{-1}(z,\omega),\omega \right)}  \right)} \cdot \\
			& \hspace{7.85cm} \overline{\left( e_k \, \psi_\eta{\left( \Phi_\eta^{-1}(z,\omega),\omega \right)} \right)} \, dz \, d\mathbb{P}(\omega) \\
			& + \int_\Omega\int_{\Phi_\eta([0,1)^n,\omega)} A_{\rm per}{\left( \Phi_\eta^{-1}(z,\omega) \right)} {\left( e_\ell \, \psi_\eta{\left( \Phi_\eta^{-1}(z,\omega),\omega \right)}  \right)} \cdot \\
			& \hspace{6cm} \overline{{\left( \nabla_{\!\! z} + 2i\pi\theta_\eta \right)} {\left( \xi_{k,\eta}{\left( \Phi_\eta^{-1}(z,\omega),\omega \right)} \right)}} \, dz \, d\mathbb{P}(\omega) \\
			& - \int_\Omega\int_{\Phi_\eta([0,1)^n,\omega)} A_{\rm per}{\left( \Phi_\eta^{-1}(z,\omega) \right)} {\left( \nabla_{\!\! z} + 2i\pi\theta_\eta \right)} {\left( \psi_\eta{\left( \Phi_\eta^{-1}(z,\omega),\omega \right)} \right)} \cdot \\
			& \hspace{7.8cm} \overline{{\left( e_\ell \, \xi_{k,\eta}{\left( \Phi_\eta^{-1}(z,\omega),\omega \right)}  \right)}} \, dz \, d\mathbb{P}(\omega) \bigg],
		\end{split}
		\end{equation}
		and the homogenized matrix can be written as 
		${ A_\eta^\ast = 2^{-1} {\big( B_\eta + B_\eta^t \big)} }$. 	
\end{remark}
	
\subsubsection{Expansion of the effective coefficients}
As a consequence of the formula of the effective coefficients of the homogenized equation~\eqref{askdjfhucomojkfdfd}, we have the following proposition:

	\begin{proposition}\label{jnchndhbvgfbdtegdferfer}
		The maps ${ \eta \mapsto A_\eta^\ast, B_\eta \in \mathbb{R}^{n \times n} }$ and ${ \eta \mapsto U_\eta^\ast \in \mathbb{R} }$ are analytics in a neighbourhood of ${ \eta=0 }$.
	\end{proposition}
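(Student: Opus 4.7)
The plan is to reduce everything to the analyticity statements already established for the building blocks $\eta\mapsto \theta_\eta$, $\eta\mapsto \lambda_\eta$, $\eta\mapsto \psi_\eta$ and $\eta\mapsto \xi_{k,\eta}$, together with the elementary analytic dependence of the stochastic perturbation of the identity on $\eta$. The key preliminary step is to remove the $\eta$-dependence from the integration domain $\Phi_\eta([0,1)^n,\omega)$ via the change of variables $z=\Phi_\eta(y,\omega)$, so that all effective coefficients become integrals over the \emph{fixed} domain $[0,1)^n\times\Omega$ of integrands which are manifestly analytic in $\eta$.

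First, I will collect the analytic ingredients. By Theorem~\ref{4087865567576ghghj} applied to $(\theta^\ast,\lambda_{\rm per}(\theta^\ast))$, the map $(\eta,\theta)\mapsto \lambda(\eta,\theta)\in\mathbb{R}$ is jointly real analytic in a neighborhood of $(0,\theta^\ast)$, and the corresponding map $(\eta,\theta)\mapsto \psi(\eta,\theta)\in\mathcal{H}$ is holomorphic (hence analytic). By Lemma~\ref{6487369847639gfhdghjdftrtrtfgcbvbv} the critical-point selection $\eta\mapsto\theta_\eta$ is real analytic, so by composition $\eta\mapsto \lambda_\eta$, $\eta\mapsto \psi_\eta$, and $\eta\mapsto \xi_{k,\eta}=(2i\pi)^{-1}\partial_{\theta_k}\psi(\eta,\theta_\eta)$ are analytic into $\mathbb{R}$ and $\mathcal{H}$, respectively. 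Similarly, for $A_\eta^\ast=D_\theta^2\lambda(\eta,\theta_\eta)$, analyticity follows immediately: each entry $\partial_{\theta_k}\partial_{\theta_\ell}\lambda(\eta,\theta)$ is real analytic in $(\eta,\theta)$, and composition with the analytic map $\eta\mapsto\theta_\eta$ preserves analyticity.

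Next, for $c_\eta$ and $U_\eta^\ast$, performing the change of variables $y=\Phi_\eta^{-1}(z,\omega)$ in \eqref{874326984yghedf} and \eqref{783874tgffffg} transforms the integrals to
\begin{equation*}
c_\eta=\int_\Omega\!\int_{[0,1)^n}\!|\psi_\eta(y,\omega)|^2\,\det(\nabla_{\!y}\Phi_\eta(y,\omega))\,dy\,d\mathbb{P}(\omega),
\end{equation*}
\begin{equation*}
c_\eta U_\eta^\ast=\int_\Omega\!\int_{[0,1)^n}\!U_{\rm per}(y)\,|\psi_\eta(y,\omega)|^2\,\det(\nabla_{\!y}\Phi_\eta(y,\omega))\,dy\,d\mathbb{P}(\omega).
\end{equation*}
By \eqref{DefPertIden} and \eqref{654367ytr6tfclmlml}, the map $\eta\mapsto \nabla_{\!y}\Phi_\eta$ (and hence $\eta\mapsto\det(\nabla_{\!y}\Phi_\eta)$) is analytic into $L^\infty([0,1)^n\times\Omega)$; combined with the analyticity of $\eta\mapsto\psi_\eta\in\mathcal{H}\hookrightarrow L^2([0,1)^n\times\Omega)$ and the boundedness of $U_{\rm per}$, the integrands are analytic $L^1$-valued functions of $\eta$. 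Integration is a bounded linear functional, so $\eta\mapsto c_\eta$ and $\eta\mapsto c_\eta U_\eta^\ast$ are analytic. Since $c_0=1\neq 0$, shrinking the neighborhood if necessary, $\eta\mapsto c_\eta^{-1}$ is analytic, and hence so is $\eta\mapsto U_\eta^\ast$.

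Finally, for $B_\eta$ given by \eqref{786587tdyghs7rsdfxsdfsdf}, the same change of variables $y=\Phi_\eta^{-1}(z,\omega)$ combined with the chain rule
\begin{equation*}
(\nabla_{\!z}+2i\pi\theta_\eta)\bigl(F\!\circ\!\Phi_\eta^{-1}\bigr)(z,\omega)=\bigl([\nabla_{\!y}\Phi_\eta]^{-T}\nabla_{\!y}F+2i\pi\theta_\eta F\bigr)(y,\omega)
\end{equation*}
converts each of the three integrals defining $(B_\eta)_{k\ell}$ into an integral over the fixed set $[0,1)^n\times\Omega$ whose integrand is a polynomial expression in $\psi_\eta$, $\xi_{k,\eta}$, $\nabla_{\!y}\psi_\eta$, $\nabla_{\!y}\xi_{k,\eta}$, $\theta_\eta$, $A_{\rm per}(y)$, $[\nabla_{\!y}\Phi_\eta]^{-1}$ and $\det(\nabla_{\!y}\Phi_\eta)$. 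Each of these factors is analytic in $\eta$ into the appropriate Banach space (the $\mathcal{H}$-valued analyticity of $\psi_\eta,\xi_{k,\eta}$ provides analytic dependence of the gradients in $L^2$, while $[\nabla_{\!y}\Phi_\eta]^{-1}$ and $\det(\nabla_{\!y}\Phi_\eta)$ are analytic in $L^\infty$ by \eqref{654367ytr6tfclmlml}), so multiplication yields analytic $L^1$-dependence, and integration preserves analyticity. Dividing by the analytic nonvanishing $c_\eta$ and symmetrizing $A_\eta^\ast=(B_\eta+B_\eta^t)/2$ concludes the proof.

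The main (and only) subtle point will be justifying the change of variables and the chain-rule identity at the level of $\mathcal{H}$-valued analytic maps; this is precisely what Remark~\ref{REMFPHI} and the equivalence between $\mathcal{H}_{\Phi_\eta}$ and $\mathcal{H}$ provide, and it is what makes it legitimate to transfer both the spectral analysis and the coefficient formulas back to the fixed reference cell $[0,1)^n\times\Omega$.
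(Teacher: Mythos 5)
Your proof is correct and follows the paper's approach: analyticity of $A_\eta^\ast$ by composition of the analytic maps $(\eta,\theta)\mapsto D_\theta^2\lambda(\eta,\theta)$ and $\eta\mapsto\theta_\eta$, and analyticity of $c_\eta$ and $U_\eta^\ast$ after the change of variables $y=\Phi_\eta^{-1}(z,\omega)$ to the fixed cell $[0,1)^n\times\Omega$ combined with analyticity of $\eta\mapsto\psi_\eta\in\mathcal{H}$. You additionally spell out the analogous argument for $B_\eta$, which the paper's proof of this proposition leaves implicit (it only treats $A_\eta^\ast$ and $U_\eta^\ast$ explicitly); that addition is natural and correct.
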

\begin{proof}
		Let us assume ${ \mathcal{U} }$ and ${ \mathcal{V} }$ as in \eqref{67ty3uhrjefd67tgrefdcx8ur7u} and \eqref{563gdc}, respectively. For each ${ \eta \in \mathcal{V} }$, the above arguments give us the formula ${ A^\ast_{\eta} = D_{\! \theta}^2 \lambda \big( \eta,\theta(\eta) \big) }$. Thus, as ${ (\eta,\theta) \in \mathcal{U} \mapsto D_{\! \theta}^2\lambda(\eta,\theta) \in \mathbb{R}^{n \times n} }$ and ${ \eta \in \mathcal{V} \mapsto \theta(\eta) \in \mathbb{R}^n }$ are analytic maps, we conclude that ${ \eta \in \mathcal{V} \mapsto D_\theta^2 \lambda(\eta,\theta(\eta)) \in \mathbb{R}^{n \times n} }$ is also an analytic map. This means that ${ \eta \mapsto A_\eta^\ast }$ is an analytic map. 

\medskip

		From \eqref{783874tgffffg} and \eqref{874326984yghedf}, making a change of variables, we have
		\begin{equation*}
			U^\ast_{\! \eta} =  c^{-1}_{\eta} \int_\Omega \int_{[0,1)^n} U_{\rm per}(y)  {\left\vert \psi_{\eta}(y,\omega) \right\vert}^2 {\rm det} [\nabla_{\!\! y} \Phi_\eta (y,\omega)] \, dz \, d\mathbb{P}(\omega)
		\end{equation*}
		and
		\begin{equation*}
			c_\eta =  \int_\Omega \int_{[0,1)^n} {\left\vert \psi_{\eta}(y,\omega) \right\vert}^2 {\rm det} [\nabla_{\!\! y} \Phi_\eta (y,\omega)] \, dz \, d\mathbb{P}(\omega) \not= 0.
		\end{equation*}
		Then, as the map ${ \eta \mapsto \psi_\eta \in \mathcal{H} \setminus\{0\} }$ is analytic, the map ${ \eta \mapsto c_\eta \not= 0 }$ is also analytic. Hence the map ${ \eta \mapsto c_\eta^{-1} }$ is analytic. Therefore, ${ \eta \mapsto U_\eta^\ast }$ is analytic.
	\end{proof}
	
As a consequence of this proposition, there exist ${ \{ A^{(j)},\,B^{(j)} \}_{j \in \mathbb{N}} \subset \mathbb{R}^{n \times n} }$ and ${ \{ U^{(j)} \}_{j \in \mathbb{N}} \subset \mathbb{R} }$ such that 
	\begin{equation}\label{678435}
		\left\{
		\begin{array}{lll}
			A_\eta^\ast & = & A^{(0)} + \eta A^{(1)} + \eta^2 A^{(2)} +\ldots, \\ [5pt]
			U_\eta^\ast & = & U^{(0)} + \eta U^{(1)} + \eta^2 U^{(2)} + \ldots,\\ [5pt]
			B_\eta^\ast &= & B^{(0)} + \eta B^{(1)} + \eta^2 B^{(2)} +\ldots.
		\end{array}
		\right.
	\end{equation}

Now, the object of our interest is determine the terms of order ${ \eta^0 }$ and ${ \eta }$ of these homogenized coefficients. For this purpose, guided by~\ref{654367ytr6tfclmlml} and 
by the formulas \eqref{786587tdyghs7rsdfxsdfsdf}, \eqref{783874tgffffg} and \eqref{874326984yghedf}, we shall analyse the expansion of the analytic functions in \eqref{798723678rtyd5rdftrgdfdfsdssss}. By analytic property, there exist the sequences ${ {\{ \theta^{(j)} \}}_{j \in \mathbb{N}} \subset \mathbb{R}^n }$, ${ {\{ \lambda^{(j)} \}}_{j \in \mathbb{N}} \subset \mathbb{R} }$, ${ {\{ \psi^{(j)} \}}_{j \in \mathbb{N}} \subset \mathcal{H} }$ and ${ {\{ \xi_k^{(j)} \}}_{j \in \mathbb{N}} \subset \mathcal{H} }$, ${ k\in\{1,\ldots,n\} }$, such that, for ${ k\in\{1,\ldots,n\} }$
	\begin{eqnarray}
		\theta_\eta & = & \theta^{(0)} + \eta \theta^{(1)} + \eta^2 \theta^{(2)} + \ldots = \theta^{(0)} + \eta \theta^{(1)} + \mathrm{O}(\eta^2), \label{9789789794r6ttrtrtr} \\
		\lambda_\eta & = & \lambda^{(0)} + \eta \lambda^{(1)} + \eta^2 \lambda^{(2)} + \ldots = \lambda^{(0)} + \eta \lambda^{(1)} + \mathrm{O}(\eta^2), \label{6t8y365873586edtygc} \\
		\psi_\eta & = & \psi^{(0)} + \eta\psi^{(1)} + \eta^2 \psi^{(2)} + \ldots = \psi^{(0)} + \eta \psi^{(1)} + \mathrm{O}(\eta^2), \label{099uiuiyujhjchhtydfty} \\
		\xi_{k,\eta} & = & \xi_k^{(0)} + \eta\xi_k^{(1)} + \eta^2 \xi_k^{(2)} + \ldots = \xi_k^{(0)} + \eta \xi_k^{(1)} + \mathrm{O}(\eta^2). \label{67tyuxcvbdfgoikjjhbhb}
	\end{eqnarray}

At first glance, in order to determine the coefficients of the expansions in~\eqref{678435} we should solve, a priori, auxiliary problems that involves both, the deterministic 
and stochastic variables. This can be a disadvantage from the point of view of numerical analysis. Our aim hereafter is to prove that, we can simplify the computations of this 
coefficients working in a periodic environment which is computationally cheaper. In order to do this, 
note that ${ \theta^{(0)} = \theta^\ast}$, ${ \lambda^{(0)}=\lambda_{\rm per}(\theta^\ast) }$, ${ \psi^{(0)} = \psi_{\rm per}(\theta^\ast) }$ and ${ \xi_k^{(0)} = \xi_{k,{\rm per}}(\theta^\ast) }$, ${ k\in\{1,\ldots,n\} }$, which satisfy
	\begin{eqnarray}
	    & & \left\{
		\begin{array}{l}
			{\left( L_{\rm per}(\theta^\ast) - \lambda_{\rm per}(\theta^\ast) \right)} {\big[ \psi_{\rm per}(\theta^\ast) \big]} = 0  \;\, \text{in} \;\, [0,1)^n, \\ [6pt]
			\hspace{1.5cm} \psi_{\rm per}(\theta^\ast) \;\; [0,1)^n\text{-periodic},
	\end{array}
		\right. \label{yhujtgvjnjnhnvfnvfshjbn} \\ [7.5pt]
		& & \left\{
		\begin{array}{l}
			{\left( L_{\rm per}(\theta^\ast) - \lambda_{\rm per}(\theta^\ast) \right)} {\big[ \xi_{k,{\rm per}}(\theta^\ast) \big]} =  \mathcal{X} {\big[ \psi_{\rm per}(\theta^\ast) \big]}  \;\, \text{in} \;\, [0,1)^n, \\ [6pt]
			\hspace{1.5cm} \xi_{k,{\rm per}}(\theta^\ast) \;\; [0,1)^n \text{-periodic},
		\end{array}
		\right. \label{yhfjsgfsfsdyhujtgvjnjnhnvfnvfshjbn}
	\end{eqnarray}
	where for ${ f\in \mathcal{H} }$
	\begin{equation*}
		\mathcal{X} {\big[ f \big]} := {\left( {\rm div}_{\! y} + 2i\pi \theta^\ast \right)} {\big\{ A_{\rm per} (y) {( e_k f )} \big\}} + {( e_k )} {\big\{ A_{\rm per}(y) {( \nabla_{\!\! y} + 2i\pi \theta^\ast )} f \big\}}.
	\end{equation*}
The equation~\eqref{yhujtgvjnjnhnvfnvfshjbn} is the spectral 
	cell equation and \eqref{yhfjsgfsfsdyhujtgvjnjnhnvfnvfshjbn} is the first auxiliary cell equation 
related to the periodic case (see Section~\ref{ACE}). 

\medskip

The following theorem show us that, 
the terms ${ \psi^{(1)} }$ and ${ \xi_k^{(1)} }$, $k\in\{1,\ldots,n\}$,
given by \eqref{099uiuiyujhjchhtydfty} and \eqref{67tyuxcvbdfgoikjjhbhb}
respectively, satisfy auxiliary type cell equations. 
	\begin{theorem}
	\label{THM58}
		Let ${ \psi^{(1)} }$ and ${ \xi_k^{(1)} }$, ${ k\in\{1,\ldots,n\} }$, be as above. Then these functions satisfy the following equations:  
		\begin{eqnarray}
			& & \left\{
			\begin{array}{l}
				{\left( L_{\rm per}(\theta^\ast) - \lambda_{\rm per}(\theta^\ast) \right)} {\big[ \psi^{(1)} \big]} = \mathcal{Y} {\big[ \psi_{\rm per}(\theta^\ast) \big]} \;\, \text{in} \;\, [0,1)^n \times \Omega, \\ [6pt]
				\hspace{1.5cm} \psi^{(1)} \; \text{stationary},
			\end{array}
			\right. \label{cvbnmdhfyhryfryhfiajbcjzx} \\ [7.5pt]
			& & \left\{
			\begin{array}{l}   
				{\left( L_{\rm per}(\theta^\ast) - \lambda_{\rm per}(\theta^\ast) \right)} {\big[ \xi_k^{(1)} \big]} = \mathcal{X}{\big[ \psi^{(1)} \big]}  \\ [6pt]
				\hspace{2cm} + \, \mathcal{Y}{\big[ \xi_{k,{\rm per}}(\theta^\ast) \big]} + \mathcal{Z}_k{\big[ \psi_{\rm per}(\theta^\ast) \big]} \;\,  \text{in} \;\, [0,1)^n \times \Omega, \\ [6pt]
				\hspace{1.5cm} \xi_k^{(1)} \; \text{stationary},
			\end{array} 
			\right. \label{cvbnmdhfdwadawdwayhryfryhfiajbcjzx}
		\end{eqnarray}
		where the operators ${ \mathcal{Y} }$ and ${ \mathcal{Z}_k }$, ${ k\in\{1,\ldots,n\} }$, are defined by
		\begin{eqnarray*}
			\mathcal{Y} {\big[ f \big]} & \!\! := \!\! & {\left( {\rm div}_{\! y} + 2i\pi \theta^\ast \right)} {\big\{ A_{\rm per} (y) {\big( -[\nabla_{\!\! y} Z](y,\omega) \nabla_{\!\! y} f + 2i\pi \theta^{(1)} f \big)} \big\}} \\ [1pt]
						& & - \, {\rm div}_{\! y} {\big\{ [\nabla_{\!\! y} Z]^t(y,\omega) A_{\rm per} (y) {(\nabla_{\!\! y} + 2i\pi\theta^\ast)} f \big\}} \\ [1pt]
						& & + {\left( 2i\pi\theta^{(1)} \right)} {\big\{ A_{\rm per}(y) {\left( \nabla_{\!\! y} + 2i\pi\theta^\ast \right)} f \big\}} \\ [1pt]
						& & + \, {\left( {\rm div}_{\! y} + 2i\pi \theta^\ast \right)}{\big\{ {\left[ {\rm div}_{\! y} Z (y,\omega) A_{\rm per} (y) \right]} {(\nabla_{\!\! y} + 2i\pi\theta^\ast)} f \big\}} + \lambda^{(1)} f \\ [1pt]
						& & + \, {\big\{ {\rm div}_{\! y} Z(y,\omega) \, {\left[ \lambda_{\rm per}(\theta^\ast) - V_{\rm per}(y) \right]} \big\}} f, \\ [6.5pt]
			\mathcal{Z}_k {\big[ f \big]} & \!\! := \!\! & {\left( {\rm div}_{\! y} + 2i\pi \theta^\ast \right)} {\big\{ {\left[ {\rm div}_{\! y} Z(y,\omega) A_{\rm per} (y) \right]} {( e_k f )} \big\}} \\ [1pt]
						& & - \, {\rm div}_{\! y} {\big\{ [\nabla_{\!\! y} Z]^t(y,\omega) A_{\rm per} (y) {( e_k f )} \big\}} + {\left( 2i\pi\theta^{(1)} \right)} {\left\{ A_{\rm per}(y) {( e_k f )} \right\}} \\ [1pt]
						& & + \, {\left( e_k \right)} {\big\{ {\big[ {\rm div}_{\! y} Z(y,\omega) A_{\rm per}(y) \big]} {\left( \nabla_{\!\! y} + 2i\pi\theta^\ast \right)} f \big\}} \\ [1pt]
						& & - \, {\left( e_k \right)} {\left\{ A_{\rm per}(y) {\left[ \nabla_{\!\! y} Z \right]}(y,\omega) \nabla_{\!\! y} f \right\}} + {\left( e_k \right)} {\big\{ A_{\rm per}(y) {( 2i\pi \theta^{(1)} f )} \big\}},
		\end{eqnarray*}
		for ${ f \in \mathcal{H} }$.
	\end{theorem}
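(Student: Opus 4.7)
The plan is to derive both equations \eqref{cvbnmdhfyhryfryhfiajbcjzx} and \eqref{cvbnmdhfdwadawdwayhryfryhfiajbcjzx} by substituting the analytic expansions \eqref{9789789794r6ttrtrtr}--\eqref{67tyuxcvbdfgoikjjhbhb} into the spectral cell equation $L^{\Phi_\eta}(\theta_\eta)[\psi_\eta]=\lambda_\eta\psi_\eta$ and into the first auxiliary cellular equation \eqref{8654873526rtgdrfdrfdrfrd4} (adapted to the pair $(\theta_\eta,\psi_\eta)$), and then matching the coefficients of the powers of $\eta$. First I would recall from the proof of Theorem~\ref{4087865567576ghghj}, specifically the expansion \eqref{iy87678yhghj354g}, that
\begin{equation*}
L^{\Phi_\eta}(\theta)=L_{\rm per}(\theta^\ast)+\sum_{|(\alpha,\beta)|=1}^{3}\bigl((\eta,\theta)-(0,\theta^\ast)\bigr)^{(\alpha,\beta)}L_{(\alpha,\beta)}+\mathrm{O}(\eta^{2}),
\end{equation*}
in $\mathcal{B}(\mathcal{H},\mathcal{H}^{\ast})$. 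Substituting $\theta=\theta_\eta=\theta^\ast+\eta\theta^{(1)}+\mathrm{O}(\eta^{2})$ and collecting terms, the operator acquires an expansion of the form $L^{\Phi_\eta}(\theta_\eta)=L_{\rm per}(\theta^\ast)+\eta\,M+\mathrm{O}(\eta^{2})$, where $M:=L_{(1,\mathbf{0})}+\sum_{k=1}^{n}\theta^{(1)}_{k}\,L_{(0,e_{k})}$ is completely determined by the variational formulae obtained in step~1 of Theorem~\ref{4087865567576ghghj}, specialized to $\theta_{0}=\theta^\ast$.

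Next, I would insert the expansions $\lambda_\eta=\lambda_{\rm per}(\theta^\ast)+\eta\lambda^{(1)}+\mathrm{O}(\eta^{2})$ and $\psi_\eta=\psi_{\rm per}(\theta^\ast)+\eta\psi^{(1)}+\mathrm{O}(\eta^{2})$ into $L^{\Phi_\eta}(\theta_\eta)[\psi_\eta]=\lambda_\eta\psi_\eta$ and equate the coefficients order by order. The $\eta^{0}$ term recovers exactly \eqref{yhujtgvjnjnhnvfnvfshjbn}, which is just a consistency check, while the $\eta^{1}$ term yields
\begin{equation*}
\bigl(L_{\rm per}(\theta^\ast)-\lambda_{\rm per}(\theta^\ast)\bigr)[\psi^{(1)}]=\lambda^{(1)}\psi_{\rm per}(\theta^\ast)-M[\psi_{\rm per}(\theta^\ast)].
\end{equation*}
Then I would match the right-hand side with the operator $\mathcal{Y}$ of the statement by integrating by parts in the variational formula for $M$: the contributions proportional to $\nabla_{y}Z$ and $\operatorname{div}_{y}Z$ come from the expansions \eqref{654367ytr6tfclmlml} of $\nabla_{y}^{-1}\Phi_\eta$ and $\det(\nabla_{y}\Phi_\eta)$, the contributions proportional to $2i\pi\theta^{(1)}$ come from $L_{(0,e_{k})}$, and the terms $\lambda^{(1)}f$ and $\operatorname{div}_{y}Z(\lambda_{\rm per}(\theta^\ast)-V_{\rm per}(y))f$ come from the expansion of $\lambda_\eta$ on the right-hand side combined with the change-of-variable factor $\det(\nabla_{y}\Phi_\eta)$ in the $L^{2}$ pairing. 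This gives \eqref{cvbnmdhfyhryfryhfiajbcjzx}.

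The proof of \eqref{cvbnmdhfdwadawdwayhryfryhfiajbcjzx} follows the same pattern but applied to the first auxiliary cellular equation \eqref{8654873526rtgdrfdrfdrfrd4}, written (after composing with $\Phi_\eta$) as
\begin{equation*}
\bigl(L^{\Phi_\eta}(\theta_\eta)-\lambda_\eta\bigr)[\xi_{k,\eta}]=\mathcal{X}^{\Phi_\eta}(\theta_\eta)[\psi_\eta]+\tfrac{1}{2i\pi}\partial_{\theta_{k}}\lambda(\eta,\theta_\eta)\,\psi_\eta,
\end{equation*}
where $\mathcal{X}^{\Phi_\eta}(\theta_\eta)$ is the operator appearing on the right-hand side of \eqref{8654873526rtgdrfdrfdrfrd4} and where, by \eqref{67trsdasdsoktig}, the last term vanishes identically in $\eta$. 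Inserting the expansion \eqref{67tyuxcvbdfgoikjjhbhb} and the already-computed expansions of $\lambda_\eta$, $\psi_\eta$, $L^{\Phi_\eta}(\theta_\eta)$, and matching the $\eta^{1}$-coefficients, the left-hand side produces $(L_{\rm per}(\theta^\ast)-\lambda_{\rm per}(\theta^\ast))[\xi_{k}^{(1)}]$ plus the term $M[\xi_{k,{\rm per}}(\theta^\ast)]-\lambda^{(1)}\xi_{k,{\rm per}}(\theta^\ast)=\mathcal{Y}[\xi_{k,{\rm per}}(\theta^\ast)]$, and the right-hand side produces $\mathcal{X}[\psi^{(1)}]$ (from the $\eta^{0}$-part of $\mathcal{X}^{\Phi_\eta}(\theta_\eta)$ acting on the $\eta^{1}$-part of $\psi_\eta$) together with a contribution coming from the $\eta^{1}$-part of the operator $\mathcal{X}^{\Phi_\eta}(\theta_\eta)$ acting on $\psi_{\rm per}(\theta^\ast)$; the latter is precisely $\mathcal{Z}_{k}[\psi_{\rm per}(\theta^\ast)]$.

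The main obstacle in carrying this out rigorously will be the bookkeeping of the $\eta^{1}$-terms of $\mathcal{X}^{\Phi_\eta}(\theta_\eta)$, because the operator $\mathcal{X}^{\Phi_\eta}(\theta)$ mixes the derivative structure of $\Phi_\eta$, the $2i\pi\theta$-shifts and the matrix $A_{\rm per}$. The clean way to handle this is to return to the variational formulation \eqref{hjhkjhjkhggd0989874} after the change of variables $y=\Phi_\eta^{-1}(z,\omega)$, expand both the Jacobian determinant and the inverse Jacobian via \eqref{654367ytr6tfclmlml}, and finally undo the integration by parts to read off the differential form of the resulting right-hand side; stationarity of the test functions is then used together with Theorem~\ref{648235azwsxqdgfd} to pass from the stationary variational identity to the pointwise periodic-in-$y$, stationary-in-$\omega$ formulation that defines $\mathcal{Y}$ and $\mathcal{Z}_{k}$ in the statement.
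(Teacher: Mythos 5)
Your proposal is essentially the same approach as the paper's: insert the analytic expansions \eqref{654367ytr6tfclmlml}, \eqref{9789789794r6ttrtrtr}--\eqref{67tyuxcvbdfgoikjjhbhb} into the spectral cell equation and into the first auxiliary cellular equation, both after the change of variables $y=\Phi_\eta^{-1}(z,\omega)$, and extract the $\eta^{1}$ coefficient. The paper writes \eqref{8365287erdtfrewxzqzazaazzaaz} and \eqref{8365wd} and reads off the $\eta$-order directly; you first package the $\eta^{1}$ coefficient of $L^{\Phi_\eta}(\theta_\eta)$ into an abstract operator $M$ via \eqref{iy87678yhghj354g}, but the underlying computation is identical.

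One bookkeeping point worth flagging in your displayed formula $\bigl(L_{\rm per}(\theta^\ast)-\lambda_{\rm per}(\theta^\ast)\bigr)[\psi^{(1)}]=\lambda^{(1)}\psi_{\rm per}(\theta^\ast)-M[\psi_{\rm per}(\theta^\ast)]$. The eigenvalue equation $L^{\Phi_\eta}(\theta_\eta)[\psi_\eta]=\lambda_\eta\psi_\eta$ pairs the right-hand side against test functions through the $\mathcal{L}_{\Phi_\eta}$ inner product, which after the change of variables carries the factor $\det[\nabla_y\Phi_\eta]=1+\eta\,\operatorname{div}_yZ+\mathrm{O}(\eta^2)$. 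Since $M$ in your notation only records the $\eta^{1}$ coefficient of the operator $L^{\Phi_\eta}(\theta_\eta)$, and not of the $\eta$-dependent pairing, the displayed identity as written is missing the contribution $\lambda_{\rm per}(\theta^\ast)\operatorname{div}_yZ\,\psi_{\rm per}(\theta^\ast)$. Your verbal description ("combined with the change-of-variable factor $\det(\nabla_y\Phi_\eta)$ in the $L^2$ pairing") shows you are aware of it and this term, together with the $V_{\rm per}$-part of $M$, does assemble into the $\operatorname{div}_yZ\,(\lambda_{\rm per}(\theta^\ast)-V_{\rm per})f$ contribution of $\mathcal{Y}$; but a rigorous write-up should either include $\lambda_{\rm per}\operatorname{div}_yZ\,\psi_{\rm per}$ explicitly in the displayed identity, or, more cleanly, do what the paper does and expand the combined variational identity \eqref{8365287erdtfrewxzqzazaazzaaz} in which $\det[\nabla_y\Phi_\eta]$ multiplies the whole integrand, so that no separate treatment of the pairing is required. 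The same remark applies to the derivation of \eqref{cvbnmdhfdwadawdwayhryfryhfiajbcjzx} from \eqref{8365wd}. Otherwise the argument, including the use of Lemma \ref{6487369847639gfhdghjdftrtrtfgcbvbv} to kill the $\partial_{\theta_k}\lambda(\eta,\theta(\eta))$ term, is the one the paper uses.
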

For the proof of this theorem, we shall use essentially the structure of the spectral cell equation~\eqref{92347828454trfhfd4rfghjls} 
with periodic coefficients accomplished by stochastic deformation of identity ${ \Phi_\eta }$ together with the identities~\eqref{654367ytr6tfclmlml}.
\begin{proof}
1. For begining, let us consider the set ${ \mathcal{V} }$ as in \eqref{563gdc}. Then, making change of variables in the spectral cell equation~\eqref{92347828454trfhfd4rfghjls} adapted to this context, we find  
		\begin{eqnarray}\label{8365287erdtfrewxzqzazaazzaaz}
&&\hspace{-0.5cm} \int_{[0,1)^n} \int_\Omega \Big\{A_{\rm per}(y) \big( [\nabla_{\!\! y} \Phi_\eta]^{-1} \nabla_{\!\! y} \psi_\eta + 2i\pi \theta_\eta \psi_\eta \big) \cdot \overline{ \big( [\nabla_{\!\! y} \Phi_\eta]^{-1} 
\nabla_{\!\! y} \zeta + 2i\pi \theta_\eta \zeta \big)}  \, \nonumber\\
&& \qquad\qquad\qquad+  {\left( V_{\rm per}(y) - \lambda_\eta \right)} \, \psi_\eta \, \overline{\zeta} \,  \Big\} {\rm det} [\nabla_{\!\! y} \Phi_\eta] \, d\mathbb{P}(\omega) \, dy = 0,
		\end{eqnarray}
for all ${ \eta \in \mathcal{V} }$ and ${ \zeta \in \mathcal{H} }$. If we insert the equations~\eqref{654367ytr6tfclmlml}, \eqref{9789789794r6ttrtrtr}, \eqref{6t8y365873586edtygc} and \eqref{099uiuiyujhjchhtydfty} in equation~\eqref{8365287erdtfrewxzqzazaazzaaz} and compute the term $\eta$, we arrive at 

\begin{equation*}
		\begin{array}{l}
			\displaystyle { \int_{[0,1)^n} \! \int_\Omega \! \Big\{ A_{\rm per}(y) {\left( \nabla_{\!\! y} \psi^{(1)} + 2i\pi \theta^\ast \psi^{(1)} \right)} \cdot \overline{{\left( \nabla_{\!\! y} \zeta + 2i\pi \theta^\ast \zeta \right)}}+{\left( V_{\rm per}(y) - \lambda_{\rm per}(\theta^\ast) \right)} \psi^{(1)} \, \overline{\zeta}} \\ [15pt]
			\displaystyle \qquad\qquad\qquad+ \, \! A_{\rm per}(y) {\left( - [\nabla_{\!\! y} Z] \, \nabla_{\!\! y} \psi_{\rm per}(\theta^\ast) + 2i\pi\theta^{(1)}\psi_{\rm per}(\theta^\ast) \right)} \! \cdot \! \overline{{\left( \nabla_{\!\! y} \zeta + 2i\pi \theta^\ast \zeta \right)}}\\ [15pt]
			\displaystyle\qquad\qquad\qquad +  A_{\rm per}(y) {\left( \nabla_{\!\! y} \psi_{\rm per}(\theta^\ast) + 2i\pi \theta^\ast \psi_{\rm per}(\theta^\ast) \right)} \cdot \overline{{\left( -[\nabla_{\!\! y} Z] \nabla_{\!\! y} \zeta + 2i\pi\theta^{(1)} \zeta \right)}} \\ [15pt]
			\displaystyle\qquad\qquad\qquad + A_{\rm per}(y) {\left( \nabla_{\!\! y} \psi_{\rm per}(\theta^\ast) + 2i\pi \theta^\ast \psi_{\rm per}(\theta^\ast) \right)} \cdot \overline{{\left( \nabla_{\!\! y} \zeta + 2i\pi \theta^\ast \zeta \right)}} \, {\rm div}_{\! y} Z \\ [15pt]
			\displaystyle \qquad\qquad\qquad- \lambda^{(1)} \, \psi_{\rm per}(\theta^\ast) \, \overline{\zeta}+ {\left( V_{\rm per}(y) - \lambda_{\rm per}(\theta^\ast) \right)} \psi^{(0)} \, \overline{\zeta} \, {\rm div}_{\! y} Z\Big\}\,d\mathbb{P}(\omega) \, dy=0,
		\end{array}
		\end{equation*}
		for all ${ \eta \in \mathcal{V} }$ and ${ \zeta \in \mathcal{H} }$. This equation is the variational formulation of the equation \eqref{cvbnmdhfyhryfryhfiajbcjzx}, which concludes the first part of the proof. 	

\medskip
2. For the second part of the proof, we have
		\begin{equation}\label{8365wd}
		\begin{split}
			& \displaystyle \int_{[0,1)^n} \int_\Omega\Big\{ A_{\rm per}(y) \big( [\nabla_{\!\! y} \Phi_\eta]^{-1} \nabla_{\!\! y} \xi_{k,\eta} + 2i\pi \theta_\eta \xi_{k,\eta} \big) \cdot \overline{\big( [\nabla_{\!\! y} \Phi_\eta]^{-1} \nabla_{\!\! y} \zeta + 2i\pi \theta_\eta \zeta \big)} \\
			& \displaystyle\qquad\qquad\qquad + A_{\rm per}(y) {\left( e_k \, \psi_\eta \right)} \cdot \overline{\big( [\nabla_{\!\! y} \Phi_\eta]^{-1} \nabla_{\!\! y} \zeta + 2i\pi \theta_\eta \zeta \big)} \\
			& \displaystyle\qquad\qquad\qquad - A_{\rm per}(y) \big( [\nabla_{\!\! y} \Phi_\eta]^{-1} \nabla_{\!\! y} \psi_\eta + 2i\pi \theta_\eta \psi_\eta \big) \cdot \overline{{\left( e_k \, \zeta \right)}} \\
			& \displaystyle\qquad\quad + {\left( V_{\rm per}(y) - \lambda_\eta \right)} \, \xi_{k,\eta} \, \overline{\zeta} - \, \frac{1}{2i\pi} \frac{\partial \lambda}{\partial \theta_k}(\eta,\theta(\eta))\,
			\psi_\eta \, \overline{\zeta}\Big\}\,{\rm det} [\nabla_{\!\! y} \Phi_\eta] \, d\mathbb{P}(\omega) \, dy = 0,
		\end{split}
		\end{equation}
		for all ${ \eta \in \mathcal{V} }$, ${ \zeta \in \mathcal{H} }$ and ${ k \in \{1,\ldots,n\} }$. Hence, taking into account the Lemma \ref{6487369847639gfhdghjdftrtrtfgcbvbv} and 
inserting the equations \eqref{654367ytr6tfclmlml}, \eqref{9789789794r6ttrtrtr}, \eqref{6t8y365873586edtygc}, \eqref{099uiuiyujhjchhtydfty} and \eqref{67tyuxcvbdfgoikjjhbhb} 
in equation~\eqref{8365wd}, a computation of the term of order ${ \eta }$ lead us to

\begin{equation*}
		\begin{array}{l} 
			\displaystyle\hspace{-0.5cm} \int_{[0,1)^n} \int_\Omega \Big\{A_{\rm per}(y) \big( \nabla_{\!\! y}\xi_k^{(1)} + 2i\pi\theta^\ast \xi_k^{(1)} \big) \cdot \overline{ \big( \nabla_{\!\! y} \zeta + 2i\pi \theta^\ast \zeta \big)}+ {\left( V_{\rm per}(y) - \lambda_{\rm per}(\theta^\ast) \right)} \xi_k^{(1)} \, \overline{\zeta} \\ [15pt]
			\displaystyle\qquad\qquad + A_{\rm per}(y) {\left( e_k \, \psi^{(1)} \right)} \cdot \overline{ \big( \nabla_{\!\! y} \zeta + 2i\pi \theta^\ast \zeta \big)} -
	A_{\rm per}(y) \big( \nabla_{\!\! y} \psi^{(1)} + 2i\pi \theta^\ast \psi^{(1)} \big) \cdot \overline{{\left( e_k \, \zeta \right)}}\\ [15pt]
				\displaystyle\qquad\qquad + A_{\rm per}(y) \big( - [\nabla_{\!\! y} Z] \nabla_{\!\! y} \xi_{k,{\rm per}}(\theta^\ast) + 2i\pi\theta^{(1)} \xi_{k,{\rm per}}(\theta^\ast) \big) \cdot \overline{ \big( \nabla_{\!\! y} \zeta + 2i\pi \theta^\ast \zeta \big)} \\ [15pt]
			\displaystyle\qquad\qquad + A_{\rm per}(y) \big( \nabla_{\!\! y} \xi_{k,{\rm per}}(\theta^\ast) + 2i\pi \theta^\ast \xi_{k,{\rm per}}(\theta^\ast) \big) \cdot \overline{ \big( -[\nabla_{\!\! y} Z] \nabla_{\!\! y} \zeta + 2i\pi\theta^{(1)} \zeta \big)}  \\ [15pt]
			\displaystyle\qquad\qquad +  A_{\rm per}(y) \big( \nabla_{\!\! y} \xi_{k,{\rm per}}(\theta^\ast) + 2i\pi \theta^\ast \xi_{k,{\rm per}}(\theta^\ast) \big) \cdot \overline{\big( \nabla_{\!\! y} \zeta + 2i\pi \theta^\ast \zeta \big)} \, {\rm div}_{\! y} Z \\ [15pt]
			\displaystyle\qquad\qquad\qquad\qquad - \lambda^{(1)} \, \xi_{k,{\rm per}}(\theta^\ast) \, \overline{\zeta} + {\left( V_{\rm per}(y) - \lambda_{\rm per}(\theta^\ast) \right)} \xi_{k,{\rm per}}(\theta^\ast) \, \overline{\zeta} \, {\rm div}_{\! y} Z\\ [15pt]
			\displaystyle\qquad\qquad\qquad +\, A_{\rm per}(y) {\left( e_k \, \psi_{\rm per}(\theta^\ast) \right)} \cdot \Big(\overline{ \big( \nabla_{\!\! y} \zeta + 2i\pi \theta^\ast \zeta \big) \, {\rm div}_{\! y} Z -[\nabla_{\!\! y} Z] \nabla_{\!\! y} \zeta + 2i\pi\theta^{(1)} \zeta}\Big) \\ [15pt]
			\displaystyle\qquad\qquad\qquad\qquad\qquad\qquad -  A_{\rm per}(y) \big( \nabla_{\!\! y} \psi_{\rm per}(\theta^\ast) + 2i\pi \theta^\ast \psi_{\rm per}(\theta^\ast) \big) \cdot \overline{{\left( e_k \, \zeta \right)}} \, {\rm div}_{\! y} Z  \\ [15pt]
			\displaystyle\qquad\qquad -\, A_{\rm per}(y) \big( - [\nabla_{\!\! y} Z] \, \nabla_{\!\! y} \psi_{\rm per}(\theta^\ast) + 2i\pi\theta^{(1)} \psi_{\rm per}(\theta^\ast) \big) \cdot \overline{{\left( e_k \, \zeta \right)}}\Big\} \, d\mathbb{P}(\omega) \, dy =0,
		\end{array}
		\end{equation*}
		for all ${ \zeta \in \mathcal{H} }$. Noting that this is the variational formulation of the equation \eqref{cvbnmdhfdwadawdwayhryfryhfiajbcjzx}, we conclude the proof. 
\end{proof}

We remember the reader that if $f:\R^n\times\Omega\to\R$ is a stationary function, then we shall use the following notation 
$$
    \mathbb{E}[f(x,\cdot)]= \int_\Omega f(x,\omega) \,d\mathbb{P}(\omega),
$$
for any $x\in\R^n$. Roughly speaking, the theorem below tell us that the homogenized matrix of the problem~\eqref{765tdyyuty67tsss} can be obtained by solving periodic problems. 

\begin{theorem}
\label{873627yuhfdd}
		Let ${ A_\eta^\ast }$ be the homogenized matrix as in \eqref{askdjfhucomojkfdfd}. Then
		\begin{equation*}
			A_\eta^\ast = A_{\rm per}^\ast + \eta A^{(1)} + \mathrm{O}(\eta^2).
		\end{equation*}
Moreover, the term of order ${ \eta^0 }$ is given by the homogenized matrix of the periodic case, that is, ${ A_{\rm per}^\ast = 2^{-1}{\left( B^{(0)} + (B^{(0)})^t \right)} }$, where the matrix ${ B^{(0)} }$ is the term of order ${ \eta^0 }$ in \eqref{678435} and it is defined by
		\begin{equation*}
		\begin{array}{r}
			\displaystyle (B^{(0)})_{k\ell} := \int_{[0,1)^n} A_{\rm per}(y) {(e_\ell \, \psi_{\rm per}(\theta^\ast))} \cdot {(e_k \, \overline{\psi_{\rm per}(\theta^\ast)})} \, dy \hspace{4cm} \\ [10pt]
			\displaystyle + \, \int_{[0,1)^n} A_{\rm per}(y) {(e_\ell \, \psi_{\rm per}(\theta^\ast))} \cdot \overline{\left( \nabla_{\!\! y} \xi_{k,{\rm per}}(\theta^\ast) + 2i\pi \theta^\ast \xi_{k,{\rm per}}(\theta^\ast) \right)} \, dy \\ [10pt]
			\displaystyle - \, \int_{[0,1)^n} A_{\rm per}(y) {\Big( {\left( \nabla_{\!\! y} \psi_{\rm per}(\theta^\ast) + 2i\pi \theta^\ast \psi_{\rm per}(\theta^\ast) \right)} \Big)} \cdot \overline{{(e_\ell \, \xi_{k,{\rm per}}(\theta^\ast))}} \, dy .
		\end{array}    
		\end{equation*}
		The term of order ${ \eta }$ is given by ${ A^{(1)} = 2^{-1} {\left( B^{(1)}+(B^{(1)})^t \right)} }$, where the matrix ${ B^{(1)} }$ is the term of order ${ \eta }$ in \eqref{678435} and it is defined by
		\begin{equation*}
		\begin{array}{l}
			\displaystyle (B^{(1)})_{k\ell} = { {\bigg[ \int_{[0,1)^n} A_{\rm per}(y) {(e_\ell \, \psi_{\rm per}(\theta^\ast))} \cdot {(e_k \, \overline{\psi_{\rm per}(\theta^\ast)})} \, \mathbb{E}\Big[{\rm div}_{\! y} Z(y,\cdot)\Big] \, dy }} \\ [11pt]
			\displaystyle + \, \int_{[0,1)^n} A_{\rm per}(y) {(e_\ell \, \psi_{\rm per}(\theta^\ast))} \cdot {( e_k \, \overline{ \mathbb{E}\big[ \psi^{(1)}(y,\cdot)\big]} )} \, dy \\ [11pt]
			\displaystyle + \int_{[0,1)^n} A_{\rm per}(y) {\left( e_\ell \, {\mathbb{E}\big[ \psi^{(1)}(y,\cdot)\big]} \right)} \cdot {(e_k \, \overline{\psi_{\rm per}(\theta^\ast)})} \, dy \\ [11pt]
			\displaystyle + \int_{[0,1)^n} A_{\rm per}(y) {(e_\ell \, \psi_{\rm per}(\theta^\ast))} \cdot 
			\overline{\left( \nabla_{\!\! y} \xi_{k,{\rm per}}(\theta^\ast) + 2i\pi \theta^\ast \xi_{k,{\rm per}}(\theta^\ast) \right)} \, {\mathbb{E}\big[{\rm div}_{\! y} Z(y,\cdot)\big]} \, dy \\ [11pt]
			\displaystyle + \int_{[0,1)^n} A_{\rm per}(y) {(e_\ell \, \psi_{\rm per}(\theta^\ast))} \cdot \overline{( \nabla_{\!\! y} {\mathbb{E}\big[ \xi_k^{(1)}(y,\cdot)
			\big]} + 2i\pi\theta^\ast {\mathbb{E}\Big[ \xi_k^{(1)}(y,\cdot)\Big]}} \\ [11pt]
			\hspace{4cm} \overline{ + \, 2i\pi\theta^{(1)} \xi_{k,{\rm per}}(\theta^\ast)-{\mathbb{E}\Big[[\nabla_{\!\! y} Z](y,\cdot)\Big]} \nabla_{\!\! y} \xi_{k,{\rm per}}(\theta^\ast)}) \, dy \\ [11pt]
			\displaystyle + \int_{[0,1)^n} \!\!\! A_{\rm per}(y) {\left( e_\ell \, {\mathbb{E}\Big[ \psi^{(1)}(y,\cdot)\Big]} \right)} \cdot \overline{\left( \nabla_{\!\! y} \xi_{k,{\rm per}}(\theta^\ast) + 2i\pi \theta^\ast \xi_{k,{\rm per}}(\theta^\ast) \right)} \, dy \\ [11pt]
			\displaystyle - \int_{[0,1)^n} \!\!\! A_{\rm per}(y) {( {\left( \nabla_{\!\! y} \psi_{\rm per}(\theta^\ast) + 2i\pi \theta^\ast \psi_{\rm per}(\theta^\ast) \right)})} \cdot \overline{{(e_\ell \, \xi_{k,{\rm per}}(\theta^\ast))}} \, {\mathbb{E}\big[{\rm div}_{\! y} Z(y,\cdot)\big]} \, dy \\ [11pt]
			\displaystyle - \int_{[0,1)^n} \!\!\! A_{\rm per}(y) {\Big( {\left( \nabla_{\!\! y} \psi_{\rm per}(\theta^\ast) + 2i\pi \theta^\ast \psi_{\rm per}(\theta^\ast) \right)} \Big)} \cdot \overline{{\Big(e_\ell \, {\mathbb{E}\Big[ \xi_k^{(1)}(y,\cdot)\Big]} \Big)}} \, dy
		\end{array}
		\end{equation*}
		\begin{equation*}
		\begin{array}{l}
			\displaystyle \hspace{0.75cm} - \int_{[0,1)^n} A_{\rm per}(y) {\left( \nabla_{\!\! y} {\mathbb{E}\Big[ \psi^{(1)}(y,\cdot)\Big]} + 
			2i\pi\theta^\ast {\mathbb{E}\Big[ \psi^{(1)}(y,\cdot)\Big]}\right.} \\ [10pt]
			\hspace{2cm} {{\left. + \, 2i\pi\theta^{(1)} \psi_{\rm per}(\theta^\ast) -{\mathbb{E}\Big[[\nabla_{\!\! y} Z](y,\cdot)\Big]} \nabla_{\!\! y} \psi_{\rm per}(\theta^\ast)\right)} \cdot \overline{{(e_\ell \, \xi_{k,{\rm per}}(\theta^\ast))}} \, dy \bigg]} \\ [11pt]
			\displaystyle - {\bigg[ \int_{[0,1)^n} {\vert \psi_{\rm per}(\theta^\ast) \vert}^2 {\mathbb{E}\Big[ {\rm div}_{\! y} Z(y,\cdot)\Big]} \, dy + \int_{[0,1)^n} \psi_{\rm per}(\theta^\ast) \, \overline{ \mathbb{E}\Big[\psi^{(1)}(y,\cdot) \Big]} } \, dy  \\ [11pt]
			\displaystyle \hspace{0.75cm} { + \int_{[0,1)^n} {\mathbb{E}\Big[\psi^{(1)}(y,\cdot) \Big]} \, \overline{\psi_{\rm per}(\theta^\ast)} \, dy \bigg]} \cdot {\bigg[ \int_{[0,1)^n}
			A_{\rm per}(y) {(e_\ell \, \psi_{\rm per}(\theta^\ast))} \cdot {(e_k \, \overline{\psi_{\rm per}(\theta^\ast)})} \, dy } \\ [11pt]
			\displaystyle \hspace{0.75cm} + \int_{[0,1)^n} A_{\rm per}(y) {(e_\ell \, \psi_{\rm per}(\theta^\ast))} \cdot \overline{\left( \nabla_{\!\! y} \xi_{k,{\rm per}}(\theta^\ast) + 2i\pi \theta^\ast \xi_{k,{\rm per}}(\theta^\ast) \right)} \, dy \\ [11pt]
			\displaystyle \hspace{0.75cm} - \, {{ \int_{[0,1)^n} A_{\rm per}(y) {\left[ {\left( \nabla_{\!\! y} \psi_{\rm per}(\theta^\ast) + 2i\pi \theta^\ast \psi_{\rm per}(\theta^\ast) \right)} \right]} \cdot \overline{{(e_\ell \, \xi_{k,{\rm per}}(\theta^\ast))}} \, dy \bigg]}}.
		\end{array}
		\end{equation*}
	\end{theorem}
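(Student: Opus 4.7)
The plan is to start from the explicit representation of the tensor $B_\eta$ given in \eqref{786587tdyghs7rsdfxsdfsdf} and perform a change of variables $y=\Phi_\eta^{-1}(z,\omega)$, as was done in passing from the cell equation to \eqref{8365287erdtfrewxzqzazaazzaaz}. Under this change, $dz=\det[\nabla_y\Phi_\eta(y,\omega)]\,dy$, each composition $\psi_\eta(\Phi_\eta^{-1}(z,\omega),\omega)$ becomes simply $\psi_\eta(y,\omega)$, and every operator $\nabla_z+2i\pi\theta_\eta$ acting on such a composition is converted into $[\nabla_y\Phi_\eta(y,\omega)]^{-1}\nabla_y+2i\pi\theta_\eta$. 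After this pull-back, $B_\eta$ is rewritten as $c_\eta^{-1}$ times a sum of three integrals over $[0,1)^n\times\Omega$ whose integrands involve only the periodic coefficients $A_{\rm per},V_{\rm per}$, the stationary fields $\psi_\eta,\xi_{k,\eta}$, the Bloch frequency $\theta_\eta$, and the volume/Jacobian factors $[\nabla_y\Phi_\eta]^{-1}$ and $\det[\nabla_y\Phi_\eta]$.

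The next step is to insert the power-series expansions \eqref{654367ytr6tfclmlml} for $[\nabla_y\Phi_\eta]^{-1}$ and $\det[\nabla_y\Phi_\eta]$, \eqref{9789789794r6ttrtrtr} for $\theta_\eta$, \eqref{099uiuiyujhjchhtydfty}--\eqref{67tyuxcvbdfgoikjjhbhb} for $\psi_\eta$ and $\xi_{k,\eta}$, together with the analyticity of $c_\eta$ established in Proposition \ref{jnchndhbvgfbdtegdferfer}, and then collect terms order by order in $\eta$. At order $\eta^0$, the Jacobian factor is $1$, the inverse is the identity, all perturbation fields $Z$, $\psi^{(1)}$, $\xi_k^{(1)}$ and $\theta^{(1)}$ drop out, and the remaining integrand becomes $\omega$-independent because $\psi^{(0)}=\psi_{\rm per}(\theta^{*})$ and $\xi_k^{(0)}=\xi_{k,\mathrm{per}}(\theta^{*})$; the $\Omega$-integration then contributes the total mass one, producing exactly the periodic tensor $B^{(0)}$ of the statement, and after symmetrization the classical effective matrix $A^{*}_{\rm per}=D^2_\theta\lambda_{\rm per}(\theta^{*})$ from \eqref{968r6f7tyudstfyusgdjsdxxxzxzxzx}.

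For the order-$\eta$ coefficient, I would gather the first-order contributions from each factor: the Jacobian gives a $\mathbb{E}[{\rm div}_y Z]$ correction, $[\nabla_y\Phi_\eta]^{-1}$ gives a $-\mathbb{E}[\nabla_y Z]$ correction inside every gradient, the corrections $\psi^{(1)},\xi_k^{(1)}$ appear linearly in place of $\psi^{(0)},\xi_k^{(0)}$, and the shift $\theta^{(1)}$ is inserted inside every $(\nabla_y+2i\pi\theta^{*})$. Because the coefficients $A_{\rm per}$ and the periodic functions $\psi_{\rm per}(\theta^{*}),\xi_{k,\mathrm{per}}(\theta^{*})$ are all $\omega$-independent, Fubini reduces every remaining $\Omega$-integral to an expectation of the stochastic factors, giving rise precisely to the terms $\mathbb{E}[\psi^{(1)}],\mathbb{E}[\xi_k^{(1)}],\mathbb{E}[{\rm div}_y Z],\mathbb{E}[\nabla_y Z]$ in the final formula. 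Finally the first-order Taylor expansion of $c_\eta^{-1}$ produces the last bracket in the expression for $B^{(1)}$, which is the product of the $\eta$-term of $c_\eta$ with the periodic $B^{(0)}$, with opposite sign. Symmetrizing $B_\eta\mapsto (B_\eta+B_\eta^{t})/2$ then yields $A^{*}_\eta=A^{*}_{\rm per}+\eta A^{(1)}+O(\eta^2)$.

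The routine but lengthy obstacle is the bookkeeping at order $\eta$: one must track complex conjugations carefully and make sure that each factor (Jacobian, inverse Jacobian, eigenfunction, shifted frequency, normalization) contributes exactly once at first order. The only conceptual subtlety is the contribution from $c_\eta^{-1}$, which is a genuine multiplicative correction and produces the long negative bracket at the end of the formula for $B^{(1)}$; this is the perturbation-theoretic analogue of the fact that $B_\eta$ is the ratio of two analytic scalars. The remainder $O(\eta^2)$ follows because each expansion used is analytic in a neighborhood of $\eta=0$ (Proposition \ref{jnchndhbvgfbdtegdferfer}), so the residual terms are uniformly bounded by $C\eta^2$ on compact subsets of $[0,1)^n\times\Omega$.
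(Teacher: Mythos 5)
Your proposal is correct and follows essentially the same route as the paper's own proof: pull $B_\eta$ back to the reference cell via the change of variables $y=\Phi_\eta^{-1}(z,\omega)$ (so that $dz=\det[\nabla_y\Phi_\eta]\,dy$ and $\nabla_z+2i\pi\theta_\eta$ becomes $[\nabla_y\Phi_\eta]^{-1}\nabla_y+2i\pi\theta_\eta$), insert the expansions \eqref{654367ytr6tfclmlml}, \eqref{9789789794r6ttrtrtr}, \eqref{099uiuiyujhjchhtydfty}, \eqref{67tyuxcvbdfgoikjjhbhb} together with the analytic expansion of $c_\eta^{-1}$ obtained from the normalization $\int_{[0,1)^n}|\psi_{\rm per}(\theta^\ast)|^2\,dy=1$, collect powers of $\eta$, and symmetrize. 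Where the paper simply states ``the same reasoning with a little more computations, which is an exercise that we leave to the reader,'' you have spelled out the bookkeeping in slightly more detail (identifying that Fubini reduces the $\Omega$-integrations to the expectations $\mathbb{E}[\psi^{(1)}]$, $\mathbb{E}[\xi_k^{(1)}]$, $\mathbb{E}[{\rm div}_y Z]$, $\mathbb{E}[\nabla_y Z]$ because the zeroth-order quantities $A_{\rm per}$, $\psi_{\rm per}$, $\xi_{k,{\rm per}}$ are $\omega$-independent, and that the final negative bracket is the first-order correction to $c_\eta^{-1}$ multiplied by $B^{(0)}$), but these are precisely the details the paper implicitly relies on, so the argument is the same.
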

\begin{proof}
1.  Taking into account ${ \mathcal{V} }$ as in \eqref{563gdc}, we get from~\eqref{786587tdyghs7rsdfxsdfsdf}, for ${ \eta \in \mathcal{V} }$, that the homogenized matrix 
is given by ${ A_\eta^\ast = 2^{-1}(B_\eta + B_\eta^t) }$. Thus, in order to describe the terms of the expansion of ${ A_\eta^\ast }$, we only need to determine the terms in the expansion of ${ B_\eta }$. 

2. Using the equations~\eqref{654367ytr6tfclmlml} and \eqref{099uiuiyujhjchhtydfty}, the map ${ \eta \mapsto c_\eta \in (0,+\infty) }$ has an expansion about ${ \eta=0 }$. Remembering 
that ${ \int_{[0,1)^n} {\vert \psi_{\rm per}(\theta^\ast) \vert}^2 dy = 1 }$, we have

\begin{equation*}\label{47647342rfedrdrefrdfer} 
		\begin{split}
			c_\eta^{-1} = & 1 - \eta {\left[ \int_\Omega \int_{[0,1)^n} {\vert \psi_{\rm per}(\theta^\ast) \vert}^2 {\rm div}_{\! y} Z(y,\omega) \, dy \, d\mathbb{P} \right.} \\
			& {\left. + \int_\Omega \int_{{[0,1)^n}} \psi_{\rm per}(\theta^\ast) \overline{\psi^{(1)}} \, dy \, d\mathbb{P} + \int_\Omega \int_{[0,1)^n} \psi^{(1)} \overline{\psi_{\rm per}(\theta^\ast)} \, dy \, d\mathbb{P} \right]} + \; \mathrm{O}(\eta^2),
		\end{split}
		\end{equation*}
in ${ \mathbb{C} }$ as ${ \eta \to 0 }$. Thus, using the expansions \eqref{654367ytr6tfclmlml}, \eqref{9789789794r6ttrtrtr}, \eqref{099uiuiyujhjchhtydfty} and \eqref{67tyuxcvbdfgoikjjhbhb} in the formula \eqref{786587tdyghs7rsdfxsdfsdf}, the computation of the resulting term of order ${ \eta^0 }$ of ${ B_\eta }$ give us the desired expression for $(B^{(0)})_{k,\ell}$.  
The same reasoning with a little more computations, which is an exercise that we leave to the reader, allow us to obtain the expression for $(B^{(1)})_{k\ell}$. 
\end{proof}

\begin{remark} Next we stress that, the computation of the coefficients of ${ A_{\rm per}^\ast }$ is performed by solving the equations \eqref{yhujtgvjnjnhnvfnvfshjbn} and \eqref{yhfjsgfsfsdyhujtgvjnjnhnvfnvfshjbn}, which are equations with periodic boundary conditions. In order to compute the coefficients of ${ A^{(1)} }$, we need to know the functions 
${ \psi^{(1)} }$ and ${ \xi_k^{(1)} }$, ${ k\in\{1,\ldots,n\} }$, which are a priori stochastic in nature (see the equations \eqref{cvbnmdhfyhryfryhfiajbcjzx} and \eqref{cvbnmdhfdwadawdwayhryfryhfiajbcjzx}, respectively). 
But Theorem \ref{873627yuhfdd} shows that, we only need their expectation values, 
${ \mathbb{E}\Big[ \psi^{(1)}(y,\cdot)\Big] }$ and ${ \mathbb{E}\Big[ \xi_k^{(1)}(y,\cdot) \Big] }$, ${ k \in \{1,\ldots,n\} }$, which are ${ [0,1)^n }$-periodic functions and, respectively, solutions of the following equations
		\begin{eqnarray*}
			& & \left\{
			\begin{array}{l}
				{\Big( L_{\rm per}(\theta^\ast) - \lambda_{\rm per}(\theta^\ast) \Big)}\, {\mathbb{E}\Big[ \psi^{(1)}(y,\cdot)\Big]} = \mathcal{Y}_{\rm per} {\big[ \psi_{\rm per}(\theta^\ast) \big]} \;\, \text{in} \;\, [0,1)^n, \\ [6pt]
				\hspace{2cm} {\mathbb{E}\Big[ \psi^{(1)}(y,\cdot)\Big]} \;\text{is $[0,1)^n$-periodic},
			\end{array}
			\right. \\ [7.5pt]
			& & \left\{
			\begin{array}{l}   
				{\left( L_{\rm per}(\theta^\ast) - \lambda_{\rm per}(\theta^\ast) \right)}{\mathbb{E}\Big[ \xi_k^{(1)}(y,\cdot) \Big]} = 
				\mathcal{X}\Big[{\mathbb{E}\Big[ \psi^{(1)}(y,\cdot)\Big]}\Big] \\ [6pt]
				\hspace{1.75cm} + \, \mathcal{Y}_{\rm per} {\big[ \xi_{k,{\rm per}}(\theta^\ast) \big]} + \mathcal{Z}_{k,{\rm per}} {\big[ \psi_{\rm per}(\theta^\ast) \big]} \;\, \text{in} \;\, 
				[0,1)^n, \\ [6pt]
				\hspace{2cm} {\mathbb{E}\Big[ \xi_k^{(1)}(y,\cdot) \Big]} \;\text{is $[0,1)^n$-periodic},
			\end{array}
			\right.
		\end{eqnarray*}
		where for ${ f\in H^1_{\rm per}([0,1)^n) }$
		\begin{eqnarray*}
			\mathcal{Y}_{\rm per} {\big[ f \big]} & \!\! := \!\! & {\left( {\rm div}_{\! y} + 2i\pi \theta^\ast \right)} {\Big\{ A_{\rm per} (y)
			 {\big( - \mathbb{E}\Big[[\nabla_{\!\! y} Z](y,\cdot)\Big] \nabla_{\!\! y} f + 2i\pi \theta^{(1)} f \big)} \Big\}} \\ [1pt]
						& & - \, {\rm div}_{\! y} {\Big\{ \mathbb{E}\Big[[\nabla_{\!\! y} Z](y,\cdot)\Big]^t A_{\rm per} (y) {(\nabla_{\!\! y} + 2i\pi\theta^\ast)} f \Big\}} \\ [1pt]
						& & + {\left( 2i\pi\theta^{(1)} \right)} {\big\{ A_{\rm per}(y) {\left( \nabla_{\!\! y} + 2i\pi\theta^\ast \right)} f \big\}} \\ [1pt]
						& & + \, {\left( {\rm div}_{\! y} + 2i\pi \theta^\ast \right)}{\Big\{ {\left[ \mathbb{E}\Big[{\rm div}_{\! y} Z(y,\cdot)\Big] A_{\rm per} (y) \right]} {(\nabla_{\!\! y} + 2i\pi\theta^\ast)} f \Big\}} + \lambda^{(1)} f \\ [1pt]
						& & + \, {\Big\{ \mathbb{E}\Big[{\rm div}_{\! y} Z(y,\cdot)\Big] \, {\left[ \lambda_{\rm per}(\theta^\ast) - V_{\rm per}(y) \right]} \Big\}} f, 
		\end{eqnarray*}
		\begin{eqnarray*}
			\mathcal{Z}_{k,{\rm per}} {\big[ f \big]} & \!\! := \!\! & {\left( {\rm div}_{\! y} + 2i\pi \theta^\ast \right)} {\Big\{ {\left[ \mathbb{E}\Big[{\rm div}_{\! y} Z(y,\cdot)\Big] A_{\rm per} (y) \right]} {( e_k f )} \Big\}} \\ [1pt]
						& & - \, {\rm div}_{\! y} {\Big\{ \mathbb{E}\Big[[\nabla_{\!\! y} Z](y,\cdot)\Big]^t A_{\rm per} (y) {( e_k f )} \Big\}} + {\left( 2i\pi\theta^{(1)} \right)} {\left\{ A_{\rm per}(y) {( e_k f )} \right\}} \\ [1pt]
						& & + \, {\left( e_k \right)} {\Big\{ {\Big[ \mathbb{E}\Big[{\rm div}_{\! y} Z(y,\cdot)\Big] A_{\rm per}(y) \Big]} {\left( \nabla_{\!\! y} + 2i\pi\theta^\ast \right)} f 
						\Big\}} \\ [1pt]
						& & - \, {\left( e_k \right)} {\left\{ A_{\rm per}(y) \mathbb{E}\Big[[\nabla_{\!\! y} Z](y,\cdot)\Big] \nabla_{\!\! y} f \right\}} + {\left( e_k \right)} {\big\{ A_{\rm per}(y) {( 2i\pi \theta^{(1)} f )} \big\}}.
		\end{eqnarray*}
	\end{remark}
Summing up, the determination of the homogenized coefficients for~\eqref{jhjkhkjhkj765675233} is a stochastic problem in nature.  However, when we consider the 
interesting context of materials which have small deviation from perfect ones (modeled by periodic functions), this problem, in the specific case~\eqref{37285gdhddddddddddd} 
reduces, at the first two orders in $\eta$, to the simpler solution to the two periodic problems above. Both of them are of the same nature. Importantly, note that 
$Z$ in~\eqref{37285gdhddddddddddd} is only present through $\mathbb{E}\Big[{\rm div}_{\! y} Z(y,\cdot)\Big]$ and $\mathbb{E}\Big[[\nabla_{\!\! y} Z](y,\cdot)\Big]$.

\medskip
In the theorem below, we assume that the homogenized matrix of the periodic case satisfies the uniform coercive condition, that is, 
$$
 A_{\rm per}^\ast \xi \cdot \xi\ge \Lambda |\xi|^2,
$$
for some $\Lambda>0$ and for all $\xi\in\R^n$, which has experimental evidence for metals and semiconductors. 
Therefore, due to Theorem~\ref{873627yuhfdd} the homogenized matrix of the perturbed case ${ A_\eta^\ast }$ has similar 
property for $\eta\sim 0$. 
	
\begin{theorem}
\label{THM511}
Let ${ v_\eta }$ be the solution of homogenized equation \eqref{askdjfhucomojkfdfd}. Then
		\begin{equation*}
			v_\eta\Big(t,\sqrt{A_\eta^\ast}\,x\Big) = v_{\rm per}\Big(t,\sqrt{A_{\rm per}^\ast}\,x\Big) + \eta\, v^{(1)}\Big(t,\sqrt{A_{\rm per}^\ast}\,x\Big) + \mathrm{O}(\eta^2),
		\end{equation*}
weakly in ${ L^2(\mathbb{R}^n_T) }$ as ${ \eta \to 0 }$, that means,
		\begin{eqnarray*}
			&&  \int_{\mathbb{R}^n_T} {\Bigg( v_\eta\Big(t,\sqrt{A_\eta^\ast}\,x\Big)-v_{\rm per}\Big(t,\sqrt{A_{\rm per}^\ast}\,x\Big) 
-\eta\, v^{(1)}\Big(t,\sqrt{A_{\rm per}^\ast}\,x\Big) \Bigg)} \, h(t,x) \, dx \, dt\\
&&\qquad\qquad\qquad= \mathrm{O}(\eta^2),
		\end{eqnarray*}
for each ${ h \in L^2(\mathbb{R}^n_T) }$, where ${ v_{\rm per} }$ is the solution of the periodic homogenized problem  
		\begin{equation}\label{987tr7tef76756g7rg5467g546r7g5}
			\left\{
			\begin{array}{c}
				i \displaystyle\frac{\partial v_{\rm per}}{\partial t} - {\rm div} {\left( A_{\rm per}^\ast \nabla v_{\rm per} \right)} + U_{\! \rm per}^\ast v_{\rm per} = 0 , \;\, \text{in} \;\, \mathbb{R}^{n+1}_T, \\ [7,5pt]
				v_{\rm per}(0,x) = v_0(x) \, , \;\, x\in \mathbb{R}^n,
			\end{array}
			\right.
		\end{equation}
		and ${ v^{(1)} }$ is the solution of
		\begin{equation}\label{73547tr764tr63tr4387tr8743tr463847}
			\left\{
			\begin{array}{c}
				i \displaystyle\frac{\partial v^{(1)}}{\partial t} - {\rm div} {\left( A_{\rm per}^\ast \nabla v^{(1)} \right)} + U_{\! \rm per}^\ast v^{(1)} = {\rm div} {\left( A_{\rm per}^\ast \nabla v_{\rm per} \right)} - U^{(1)}v_{\rm per}, \;\, \text{in} \;\, \mathbb{R}^{n+1}_T, \\ [7,5pt]
				v^{(1)}(0,x) = v^{1}_0(x) \, , \;\, x\in \mathbb{R}^n,
			\end{array}
			\right.
		\end{equation}
		where ${ U^{(1)} }$ is the coefficient of the term of order ${ \eta }$ of the expansion ${ U_\eta^\ast }$ and $v_0^{1}\in C_c^{\infty}(\R^n)$ is given by the limit
$$
v_0^1\Big(\sqrt{A_{\rm per}^\ast}\,x\Big):= \lim_{\eta\to 0}\frac{v_0\Big(\sqrt{A_\eta^\ast}\,x\Big)-v_0\Big(\sqrt{A_{\rm per}^\ast}\,x\Big)}{\eta}.
$$
\end{theorem}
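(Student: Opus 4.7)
The plan is to reduce the problem to constant-coefficient Schr\"odinger equations with the standard Laplacian via an anisotropic change of variables, and then to compare the rescaled solutions by a direct energy estimate on the remainder.

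First, I would exploit the coercivity hypothesis on $A_{\rm per}^\ast$ together with Theorem~\ref{873627yuhfdd} to deduce that $A_\eta^\ast$ stays uniformly coercive for $|\eta|$ sufficiently small, so that its symmetric square root $\sqrt{A_\eta^\ast}$ is well defined, invertible, and depends analytically on $\eta$. Introducing
$$
w_\eta(t,x):=v_\eta\Big(t,\sqrt{A_\eta^\ast}\,x\Big),\quad w_0(t,x):=v_{\rm per}\Big(t,\sqrt{A_{\rm per}^\ast}\,x\Big),\quad \widetilde{w}(t,x):=v^{(1)}\Big(t,\sqrt{A_{\rm per}^\ast}\,x\Big),
$$
a straightforward chain-rule computation exploiting the symmetry of $\sqrt{A_\eta^\ast}$ shows that ${\rm div}_y(A_\eta^\ast\nabla_y v_\eta)$ transforms into $\Delta_x w_\eta$. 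Consequently $w_\eta$ solves the constant-coefficient Schr\"odinger equation $i\partial_t w_\eta-\Delta w_\eta+U_\eta^\ast w_\eta=0$ with initial datum $v^0(\sqrt{A_\eta^\ast}\,x)$, and analogous equations hold for $w_0$ (with $U_{\rm per}^\ast$) and for $\widetilde w$ (whose forcing follows from~\eqref{73547tr764tr63tr4387tr8743tr463847}).

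Second, I would study the remainder $R_\eta(t,x):=w_\eta(t,x)-w_0(t,x)-\eta\,\widetilde w(t,x)$. Inserting the analytic expansions of Proposition~\ref{jnchndhbvgfbdtegdferfer}, namely $U_\eta^\ast=U_{\rm per}^\ast+\eta U^{(1)}+O(\eta^2)$ and the corresponding one for $A_\eta^\ast$, into the equation for $w_\eta$, subtracting the equations for $w_0$ and $\eta\widetilde w$, and using the matching condition between the source term in~\eqref{73547tr764tr63tr4387tr8743tr463847} and the $O(\eta)$ terms in $U_\eta^\ast$ and $A_\eta^\ast$, one checks that $R_\eta$ satisfies a Schr\"odinger equation of the form
$$
i\partial_t R_\eta-\Delta R_\eta+U_{\rm per}^\ast R_\eta=\eta^2\,F_\eta(t,x),
$$
where $F_\eta$ is uniformly bounded in $L^\infty(0,T;L^2(\mathbb R^n))$ thanks to the smoothness of $v_{\rm per}$ and $v^{(1)}$ inherited from $v^0\in C_c^\infty(\mathbb R^n)$ and the elliptic regularity of $A_{\rm per}^\ast$. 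The initial datum $R_\eta(0,x)=v^0(\sqrt{A_\eta^\ast}\,x)-v^0(\sqrt{A_{\rm per}^\ast}\,x)-\eta v_0^1(\sqrt{A_{\rm per}^\ast}\,x)$ is $O(\eta^2)$ in $L^2$ by a second-order Taylor expansion of $v^0$ around $\sqrt{A_{\rm per}^\ast}\,x$, using precisely the definition of $v_0^1$ as the first-order limit.

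Third, I would apply the $L^2$-conservation/Duhamel estimate for the constant-coefficient Schr\"odinger equation (item~(i) of Lemma~\ref{63457rf2wertgh}) to obtain
$$
\sup_{t\in[0,T]}\|R_\eta(t,\cdot)\|_{L^2(\mathbb R^n)}\le \|R_\eta(0,\cdot)\|_{L^2}+\int_0^T\|\eta^2 F_\eta(s,\cdot)\|_{L^2}\,ds=O(\eta^2),
$$
and therefore $\|R_\eta\|_{L^2(\mathbb R_T^n)}=O(\eta^2)$. Since strong $L^2$ convergence implies weak $L^2$ convergence, unrolling the definitions of $w_\eta$, $w_0$ and $\widetilde w$ gives exactly the asserted expansion tested against any $h\in L^2(\mathbb R_T^n)$.

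The main obstacle I anticipate is the bookkeeping in the second step: when translating~\eqref{73547tr764tr63tr4387tr8743tr463847} from $v^{(1)}$ to $\widetilde w$ through the $\sqrt{A_{\rm per}^\ast}$-rescaling, one must carefully match the resulting source term against the $O(\eta)$ contribution produced by the differences $A_\eta^\ast-A_{\rm per}^\ast$ and $U_\eta^\ast-U_{\rm per}^\ast$ in the equation for $w_\eta$. Verifying that these contributions cancel to order $\eta^2$ is where the analyticity of $\eta\mapsto(A_\eta^\ast,U_\eta^\ast,\psi_\eta,\theta_\eta)$ established in Theorem~\ref{4087865567576ghghj}, together with the explicit expansions in Theorem~\ref{873627yuhfdd}, is essential, and is the only non-routine ingredient in the argument.
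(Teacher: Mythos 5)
Your change of variables $w_\eta(t,x):=v_\eta(t,\sqrt{A_\eta^\ast}\,x)$ is exactly the device the paper uses, but it also kills the mechanism your Step~2 relies on. After this substitution the matrix $A_\eta^\ast$ is entirely absorbed into the Laplacian, so the equation for $w_\eta$ reads $i\partial_t w_\eta-\Delta w_\eta+U_\eta^\ast w_\eta=0$: the \emph{only} surviving $\eta$-dependence in the PDE is through $U_\eta^\ast$, while the expansion of $A_\eta^\ast$ enters solely through the initial datum $v^0(\sqrt{A_\eta^\ast}\,x)$. Consequently there is no $O(\eta)$ term of the form ${\rm div}\big((A_\eta^\ast-A_{\rm per}^\ast)\nabla\cdot\big)$ left in the equation to match against the source of~\eqref{73547tr764tr63tr4387tr8743tr463847}. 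Writing out the equation for $R_\eta=w_\eta-w_0-\eta\,\widetilde w$ and using $U_\eta^\ast=U_{\rm per}^\ast+\eta U^{(1)}+O(\eta^2)$ gives
\begin{equation*}
i\partial_t R_\eta-\Delta R_\eta+U_{\rm per}^\ast R_\eta=-\eta\,U^{(1)}(w_\eta-w_0)+O(\eta^2)\,w_\eta\;-\;\eta\,\Delta w_0,
\end{equation*}
where the last term is precisely the rescaled ${\rm div}(A_{\rm per}^\ast\nabla v_{\rm per})$ coming from~\eqref{73547tr764tr63tr4387tr8743tr463847}. This term is of size $O(\eta)$, not $O(\eta^2)$, and nothing in your decomposition cancels it. Your energy estimate then yields only $\|R_\eta\|_{L^2}=O(\eta)$, which is the elementary bound $\|w_\eta-w_0\|_{L^2}=O(\eta)$ and does not establish the asserted second-order expansion. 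This is precisely the ``matching condition'' you flag as the only non-routine ingredient: you must actually verify it, and as written it fails.

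It is also worth noting that your proposal aims at something genuinely stronger than what the paper proves. The paper does not attempt a quantitative $O(\eta^2)$ strong estimate on a remainder; it defines $W_\eta=(V_\eta-V)/\eta$, shows by the energy inequality that $\{W_\eta\}$ is bounded in $L^2(\mathbb{R}^{n+1}_T)$, extracts a weakly convergent subsequence, and passes to the limit in the PDE (after dividing the equation for $V_\eta-V$ by $\eta$, which leaves an $O(1)$ right-hand side) to identify the limit equation. That is a soft compactness argument, not a remainder estimate. Your strategy of closing an $O(\eta^2)$ energy estimate on $R_\eta$ is appealing and would indeed give more, but it requires the forcing in the $R_\eta$ equation to be $O(\eta^2)$, and demonstrating that requires either rewriting the target equation for $v^{(1)}$ so that its source, after the $\sqrt{A_{\rm per}^\ast}$-rescaling, does not introduce an uncompensated $\Delta w_0$, or explaining where an $O(\eta)$ counterterm arises. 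Neither is supplied in the proposal, so the key step is a gap rather than bookkeeping.
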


\begin{proof}
1. Taking into account the set ${ \mathcal{V} }$ as in \eqref{563gdc}, we have for ${ \eta \in \mathcal{V} }$ and from the conservation of energy of the homogenized Schr\"odinger equation \eqref{askdjfhucomojkfdfd}, that the solution ${ v_\eta : \mathbb{R}^n_T \to \mathbb{C} }$ satisfies

		\begin{equation*}
			{\Vert v_\eta \Vert}_{L^2(\mathbb{R}^{n+1}_T)} = T {\Vert v_0 \Vert}_{L^2(\mathbb{R}^n)}, \;\, \forall \eta \in \mathcal{V}.
		\end{equation*}
Thus, after possible extraction of a subsequence, we have the existence of a function ${ v^{(0)} \in L^2(\mathbb{R}^{n+1}_T) }$ such that 
		\begin{equation}\label{tvdfvcfvfvfvfcdcdcdcdc}
			v_{\eta} \; \xrightharpoonup[\eta \to 0]{} \; v^{(0)} \; \text{em} \; L^2(\mathbb{R}^{n+1}_T).
		\end{equation}

By the variational formulation of the equation \eqref{askdjfhucomojkfdfd}, we find
		\begin{equation}\label{yhnygbtgbrvftgbdfervd}
		\begin{array}{l}
			0 = \displaystyle i \int_{\mathbb{R}^n} v_0(x) \, \overline{\varphi}(0,x) \, dx - i  \int_{\mathbb{R}^n_T} v_\eta(t,x) \, \frac{\partial \overline{\varphi}}{\partial t} (t,x) \, dx \, dt \\ [15pt]
			\displaystyle \qquad\qquad+ \int_{\mathbb{R}^n_T}\Bigg\{- {\left\langle A_\eta^\ast v_\eta(t,x), D^2 {\varphi}(t,x) \right\rangle} + U_\eta^\ast v_\eta(t,x) \, \overline{\varphi}(t,x)\Bigg\} \, dx \, dt,
		\end{array}
		\end{equation}
for all ${ \varphi \in C_{\rm c}^1((-\infty,T)) \otimes C_{\rm c}^2(\mathbb{R}^n) }$. Recall that ${ {\left\langle P,Q \right\rangle} := {\rm tr}(P \overline{Q}^t) }$, for ${ P,Q }$ in 
${ \mathbb{C}^{n \times n} }$. Then, using~\eqref{tvdfvcfvfvfvfcdcdcdcdc}, the Theorem~\ref{jnchndhbvgfbdtegdferfer}, making ${ \eta \to 0 }$ and invoking the 
uniqueness property of the equation~\eqref{987tr7tef76756g7rg5467g546r7g5}, we conclude that ${ v^{(0)}=v_{\rm per} }$.

2. Now, using that ${ U_\eta^\ast = U_{\rm per}^\ast + \eta\, U^{(1)} + \mathrm{O}(\eta^2) }$ as ${ \eta \to 0 }$, defining $V_{\eta}(t,x):=v_\eta\Big(t,\sqrt{A_\eta^\ast}\,x\Big)$ and 
using the homogenized equation \eqref{askdjfhucomojkfdfd}, we arrive at 
\begin{equation}\label{PertCase1}
		\left\{
		\begin{array}{c}
			i \displaystyle\frac{\partial V_\eta}{\partial t} - \Delta V_{\eta} + U_{\rm per}^\ast V_\eta = -\Big(\eta\,U^{(1)} + \mathrm{O}(\eta^2)\Big)\,V_{\eta}\, , \;\, \text{in} \;\, \mathbb{R}^{n+1}_T, \\ [7,5pt]
			V_\eta(0,x) = v^0\Big(\sqrt{A_\eta^\ast}\,x\Big) \, , \;\, x\in \mathbb{R}^n,
		\end{array}
		\right.
	\end{equation}
Proceeding similarly with respect to $V(t,x):=v_{\rm per}\Big(t,\sqrt{A_{\rm per}^\ast}\,x\Big)$, we obtain

\begin{equation}\label{PertCase2}
		\left\{
		\begin{array}{c}
			i \displaystyle\frac{\partial V}{\partial t} - \Delta V + U_{\rm per}^\ast V_\eta = 0 \, , \;\, \text{in} \;\, \mathbb{R}^{n+1}_T, \\ [7,5pt]
			V(0,x) = v^0\Big(\sqrt{A_{\rm per}^\ast}\,x\Big) \, , \;\, x\in \mathbb{R}^n.
		\end{array}
		\right.
	\end{equation}
Now, the difference between the equations~\eqref{PertCase1} and~\eqref{PertCase2} yields, 
\begin{equation}\label{PertCase3}
		\left\{
		\begin{array}{c}
			i \displaystyle\frac{\partial (V_\eta-V)}{\partial t} - \Delta (V_{\eta}-V) + U_{\rm per}^\ast (V_\eta-V) = -\Big(\eta\,U^{(1)} + \mathrm{O}(\eta^2)\Big)\,V_{\eta}\, , \;\, \text{in} \;\, \mathbb{R}^{n+1}_T, \\ [7,5pt]
			(V_\eta-V)(0,x) = v^0\Big(\sqrt{A_\eta^\ast}\,x\Big)-v^0\Big(\sqrt{A_{\rm per}^\ast}\,x\Big) \, , \;\, x\in \mathbb{R}^n.
		\end{array}
		\right.
	\end{equation}
Hence, multiplying the last equation by $\overline{V_{\eta}-V}$, integrating over $\R^n$ and taking the imaginary part yields
$$
\frac{d}{dt}\|V_{\eta}-V{\|}_{L^2(\mathbb{R}^n)}\le \mathrm{O}(\eta), \quad \text{for $\eta\in \mathcal{V}$.}
$$
Defining 
\begin{equation*}
			W_\eta(t,x) := \frac{V_\eta(t,x)-V(t,x)}{\eta}, \;\, \eta \in \mathcal{V},
		\end{equation*}
the last inequality provides,
$
\sup_{\eta\in \mathcal{V}} \| W_{\eta}{\|}_{L^2(\mathbb{R}^{n+1}_T)}< +\infty.
$
Thus, taking a subsequence if necessary, there exists ${ v^{(1)} \in L^2(\mathbb{R}^{n+1}_T) }$ such that
		\begin{equation}\label{67dtguystrt6756456rt3yd}
			W_{\eta}(t,x) \; \xrightharpoonup[\eta \to 0]{} \; v^{(1)}\Big(t,\sqrt{A_{\rm per}^\ast}\,x\Big), \; \text{in} \; L^2(\mathbb{R}^{n+1}_T).
		\end{equation}
Hence, multiplying the equation~\eqref{PertCase3} by $\eta^{-1}$, letting $\eta\to 0$ and performing a change of variables, we reach the 
equation~\eqref{73547tr764tr63tr4387tr8743tr463847} finishing the proof of the theorem.

\end{proof}

\section*{Conflict of Interest}
Author Wladimir Neves has received research grants from CNPq
through the grant  308064/2019-4, and also by FAPERJ 
(Cientista do Nosso Estado) through the grant E-26/201.139/2021. 
Author Jean Silva has received research grants from CNPq through the Grant 303533/2020-0.

\end{document}